\newtheorem{thm}{Theorem}[section]
\newtheorem{lem}[thm]{Lemma}
\newtheorem{prop}[thm]{Proposition}
\newcommand{\lesi}{\lesssim}
\newcommand{\f}{\frac}
\newcommand{\su}{\subset}
\newcommand{\vc}{\infty}
\newcommand{\Rn}{\mathbb{R}^n}
\title[Higher-order Riesz transforms associated with  Laguerre expansions]{Weighted norm inequalities of higher-order Riesz transforms associated with  Laguerre expansions}         
\author[T. A. Bui]{The Anh Bui}
\address{School of Mathematical and Physical Sciences, Macquarie University, NSW 2109,
	Australia}
\email{the.bui@mq.edu.au}
\keywords{Laguerre function expansion; heat kernel; higher-order Riesz transform}
\begin{document}

\begin{abstract}
Let $\nu=(\nu_1,\ldots,\nu_n)\in (-1,\vc)^n$, $n\ge 1$, and let $\mathcal{L}_\nu$ be a self-adjoint extension of the differential operator 
\[
L_\nu := \sum_{i=1}^n \left[-\frac{\partial^2}{\partial x_i^2} + x_i^2 + \frac{1}{x_i^2}(\nu_i^2 - \frac{1}{4})\right]
\] 
on $C_c^\infty(\mathbb{R}_+^n)$ as the natural domain. The $j$-th partial derivative associated with $L_{\nu}$ is given by
\[
\delta_{\nu_j} = \frac{\partial}{\partial x_j} + x_j-\frac{1}{x_j}\Big(\nu_j + \f{1}{2}\Big), \ \ \ \ j=1,\ldots, n.
\]
In this paper, we investigate the weighted estimates of the higher-order Riesz transforms  $\delta_\nu^k\mathcal L^{-|k|/2}_\nu, k\in \mathbb N^n$, where $\delta_\nu^k=\delta_{\nu_n}^{k_n}\ldots \delta_{\nu_1}^{k_1}$. This completes the description of the boundedness of the higher-order Riesz transforms with the full range $\nu \in (-1,\vc)^n$.
\end{abstract}
\date{}

\maketitle

\tableofcontents

\section{Introduction}\label{sec: intro}
For each $\nu =(\nu_1,\ldots, \nu_n)\in (-1,\vc)^n$, we consider the Laguerre differential operator 
\[
\begin{aligned}
	L_{\nu}
	&=\sum_{i=1}^n\Big[-\f{\partial^2 }{\partial x_i^2} + x_i^2+\frac{1}{x_i^2}\Big(\nu_i^2 - \frac{1}{4}\Big)\Big], \quad \quad x \in (0,\vc)^n.
\end{aligned}
\]
The $j$-th partial derivative associated with $L_{\nu}$ is given by
\[
\delta_{\nu_j} = \frac{\partial}{\partial x_j} + x_j-\frac{1}{x_j}\Big(\nu_j + \f{1}{2}\Big).
\]
Then the  adjoint of $\delta_{\nu_j}$ in $L^2(\mathbb{R}^n_+)$ is
\[
\delta_{\nu_j}^* = -\frac{\partial}{\partial x_j} + x_j-\frac{1}{x_j}\Big(\nu_j + \f{1}{2}\Big).
\]
It is straightforward that
\[
\sum_{i=1}^{n} \delta_{\nu_i}^* \delta_{\nu_i} = L_{\nu} {-2}(|\nu| + n).
\]
Let $k = (k_1, \ldots, k_n) \in \mathbb{N}^n$, $\mathbb N = \{0, 1, \ldots\}$, and $\nu = (\nu_1, \ldots, \nu_n) \in (-1, \infty)^n$ be multi-indices. The Laguerre function $\varphi_k^{\nu}$ on $\mathbb{R}^n_+$ is defined as
\[
\varphi_k^{\nu}(x) = \varphi^{\nu_1}_{k_1}(x_1) \ldots \varphi^{\nu_n}_{k_n}(x_n), \quad x = (x_1, \ldots, x_n) \in \mathbb{R}^n_+,
\]
where $\varphi^{\nu_i}_{k_i}$ are the one-dimensional Laguerre functions
\[
\varphi^{\nu_i}_{k_i}(x_i) = \Big(\f{2\Gamma(k_i+1)}{\Gamma(k_i+\nu_i+1)}\Big)^{1/2}L_{\nu_i}^{k_i}(x_i^2)x_i^{\nu_i + 1/2}e^{-x_i^2/2}, \quad x_i > 0, \quad i = 1, \ldots, n,
\]
given $\nu > -1$ and $k \in \mathbb{N}$, $L_{\nu}^k$ denotes the Laguerre polynomial of degree $k$ and order $\nu$ outlined in  \cite[p.76]{L}.

Then it is well-known that the system $\{\varphi_k^{\nu} : k \in \mathbb{N}^n\}$ is an orthonormal basis of $L^2(\mathbb{R}^n_+, dx)$. Moreover, Each $\varphi_k^{\nu}$ is an eigenfunction of  $L_{\nu}$ corresponding to the eigenvalue of $4|k| + 2|\nu| + 2n$, i.e.,
\begin{equation}\label{eq-eignevalue eigenvector}
	L_{\nu}\varphi_k^{\nu} = (4|k| + 2|\nu| + 2n) \varphi_k^{\nu},
\end{equation}
where $|\nu| = \nu_1 + \ldots + \nu_n$ and  $|k| = k_1 + \ldots + k_n$. The operator $L_{\nu}$ is positive and symmetric in $L^2(\mathbb{R}^n_+, dx)$.

If $\nu = \left(-\frac{1}{2}, \ldots, -\frac{1}{2}\right)$, then $L_{\nu}$ becomes the harmonic oscillator $-\Delta + |x|^2$ on $\mathbb R^n_+$. Note that the Riesz transforms related to the harmonic oscillator $-\Delta + |x|^2$ on $\mathbb R^n$ were investigated in \cite{ST, Th}.

The operator
\[
\mathcal{L}_{\nu}f = \sum_{k\in \mathbb{N}^{n}} (4|k| + 2|\nu| + 2n) \langle f, \varphi_{k}^\nu\rangle \varphi_{k}^\nu 
\]
defined on the domain $\text{Dom}\, \mathcal{L}_{\nu} = \{f \in L^2(\mathbb{R}^n_+): \sum_{k\in \mathbb{N}^{d}} (4|k| + 2|\nu| + 2n) |\langle f, \varphi_{k}^\nu\rangle|^2 < \infty\}$ is a self-adjoint extension of $L_{\nu}$ (the inclusion $C^{\infty}_c(\mathbb{R}^{d}_{+}) \subset \text{Dom}\, \mathcal{L}_{\nu}$ may be easily verified), has the discrete spectrum $\{4\ell + 2|\nu| + 2n: \ell \in \mathbb{N}\}$, and admits the spectral decomposition
\[
\mathcal{L}_{\nu}f = \sum_{\ell=0}^{\infty} (4\ell + 2|\nu| + 2n) P_{\nu,\ell}f,
\]
where the spectral projections are
\[
P_{\nu,\ell}f = \sum_{|k|=\ell} \langle f, \varphi_{k}^\nu\rangle \varphi_{k}^\nu.
\]

Moreover,
\begin{equation}\label{eq- delta and eigenvector}
	\delta_{\nu_j} \varphi_k^\nu =-2\sqrt{k_j} \varphi_{k-e_j}^{\nu+e_j}, \ \ \delta_{\nu_j}^* \varphi_k^\nu =-2\sqrt{k_j+1} \varphi_{k+e_j}^{\nu-e_j},
\end{equation}
where $\{e_1,\ldots, e_n\}$ is the standard basis for $\mathbb R^n$. Here and later on we use the convention that $\varphi_{k-e_j}^{\nu+e_j}=0$ if $k_j-1<0$ and $\varphi_{k+e_j}^{\nu-e_j}=0$ if $\nu_j-1<0$. See for example \cite{NS}.

The primary goal of this paper is to establish the boundedness of the higher-order Riesz transform  
\[
\delta_\nu^k \mathcal{L}_\nu^{-|k|/2}, \quad k\in \mathbb{N}^n,
\]
for the full parameter range \( \nu \in (-1,\infty)^n \), where  
\[
\delta_\nu^k = \delta_{\nu_n}^{k_n} \cdots \delta_{\nu_1}^{k_1}.
\]
The study of Riesz transforms in the setting of orthogonal expansions dates back to the work of Muckenhoupt and Stein~\cite{MS}. Since then, substantial progress has been made, particularly in the context of Riesz transforms associated with Laguerre and Hermite operators. For further developments, see~\cite{Betancor, Betancor2, Muc, NS, NS2, NSj, MST2, MST1, Th} and the references therein.  

In the case \( n=1 \), higher-order Riesz transforms were investigated in~\cite{Betancor} for \( \nu > -1 \). However, the techniques employed there do not work for the  higher dimension case. For \( n \geq 2 \), prior studies have focused only on first-order Riesz transforms of the form  
\[
\delta_{\nu_j} \mathcal{L}_\nu^{-1/2}, \quad j=1,\ldots, n.
\]
Specifically, it was shown in~\cite{NS} that these transforms are Calderón-Zygmund operators under the condition \( \nu_j \in \{-1/2\} \cup (1/2, \infty) \). This result was later extended by the first author to \( \nu_j \in [-1/2, \infty) \) in~\cite{B1}.  

The study of higher-order Riesz transforms in dimensions \( n \geq 2 \) is considerably more challenging due to several factors. Firstly, while the \( L^2 \)-boundedness of first-order Riesz transforms \( \delta_{\nu_j} \mathcal{L}_\nu^{-1/2} \) follows naturally from spectral theory, establishing the \( L^2 \)-boundedness of higher-order Riesz transforms \( \delta_\nu^k \mathcal{L}_\nu^{-|k|/2} \) for \( |k| \geq 2 \) is significantly more involved. Secondly, obtaining kernel estimates for these operators requires precise control over the higher-order derivatives of the heat kernel associated with \( \mathcal{L}_\nu \), which remains an underdeveloped area in the literature.  

In the one-dimensional case \( n=1 \), power-weighted estimates for higher-order Riesz transforms were obtained in~\cite{Betancor} for \( \nu \in (-1, \infty) \). Their approach relied on a comparison with the corresponding higher-order Riesz transform of the Hermite operator in \( \mathbb{R} \). More recently, the first author and X. T. Duong demonstrated that in the higher-dimensional case \( n \geq 1 \), the higher-order Riesz transforms are Calderón-Zygmund operators for \( \nu \in [-1/2, \infty)^n \); see~\cite{BD}.

Motivated by the above research, we aim to establish that the boundedness of the higher-order Riesz transforms \(\delta_\nu^k \mathcal{L}_\nu^{-|k|/2}\) for the full range  \(\nu \in (-1, \infty)^n\).  Our results not only extend those in \cite{NS, B1, Betancor2, BD} but also provide complete the  description of the boundedness properties of higher-order Riesz transforms.

In order to state our main results, we introduce some notation. For  $\nu=(\nu_1,\ldots,\nu_n)\in (-1,\vc)^n$, we set
\begin{equation}\label{eq-gamma nu}
	\gamma_\nu = \max\{\gamma_{\nu_j}: j=1,\ldots,n\}
\end{equation}
where
\begin{equation}\label{eq-gamma nu_j}
	\gamma_{\nu_j} = \begin{cases}
		 -1/2-\nu_j, \ \ & \ \text{if} \  -1<\nu_j<-1/2, \\
		0, \ \ &\ \text{if} \  \nu_j\ge -1/2
	\end{cases}
\end{equation}
for $j=1,\ldots, n$.

Let $k\in \mathbb N^n$. We define $\sigma(k) =(\sigma(k)_1,\ldots, \sigma(k)_n)$, where
\[
\sigma(k)_j=\begin{cases}
	1, \ \ &\text{if $k_j$ is odd};\\
	0, \ \ \ & \text{if $k_j$ is even},
\end{cases}
\]
for $j=1,\ldots,n$.

The following theorem is the main result of this paper. For the definitions of the weight classes $A_p$ and $RH_q$, we refer to Section 2.1.
\begin{thm}\label{mainthm-genral case}
	Let $\nu\in (-1,\vc)^n$ and $k\in \mathbb N^n$.  Then the Riesz transform $\delta_\nu^k \mathcal L_\nu^{-|k|/2}$ is bounded on $L^p_w(\Rn_+)$ for all $\f{1}{1-\gamma_\nu}<p<\f{1}{\gamma_{\nu+\sigma(k)}}$ and $w\in A_{p(1-\gamma_\nu)}\cap RH_{(\f{1}{p\gamma_{\nu+\sigma(k)}})'}$ with the convention $\f{1}{0}=\vc$.
\end{thm}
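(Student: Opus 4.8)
The plan is to fit $R:=\delta_\nu^k\mathcal{L}_\nu^{-|k|/2}$ into the Auscher--Martell scheme for operators with generalized off-diagonal estimates, with the endpoints
\[
p_0=\f{1}{1-\gamma_\nu},\qquad q_0=\f{1}{\gamma_{\nu+\sigma(k)}}.
\]
Granting $L^2$-boundedness of $R$ and $L^{p_0}\!-\!L^{q_0}$ off-diagonal estimates on balls of radius $r$ for the truncated pieces $R\bigl(I-(I-e^{-r^2\mathcal{L}_\nu})^M\bigr)$ and $R(I-e^{-r^2\mathcal{L}_\nu})^M$ (with $M$ large), a weighted extrapolation theorem of Auscher--Martell type gives boundedness of $R$ on $L^p_w$ for $p_0<p<q_0$ and $w\in A_{p/p_0}\cap RH_{(q_0/p)'}$. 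With the endpoints above this is precisely $\f{1}{1-\gamma_\nu}<p<\f{1}{\gamma_{\nu+\sigma(k)}}$ and $w\in A_{p(1-\gamma_\nu)}\cap RH_{(1/(p\gamma_{\nu+\sigma(k)}))'}$, the asserted statement.

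The first step is $L^2$-boundedness, which I would read off the eigenbasis $\{\varphi_m^\nu\}$. The difficulty when $|k|\ge2$ is that each factor of $\delta_\nu^k=\delta_{\nu_n}^{k_n}\cdots\delta_{\nu_1}^{k_1}$ carries the \emph{fixed} parameter $\nu_j$, whereas one application of $\delta_{\nu_j}$ already raises the order to $\nu+e_j$, so the clean identity \eqref{eq- delta and eigenvector} applies only to the parameter-matched operators $\delta_{(\nu+\ell e_j)_j}$. Using $\delta_{\nu_j}=\delta_{(\nu+e_j)_j}+x_j^{-1}$ and iterating, I would expand $\delta_\nu^k$ into the fully matched tower $\prod_j\bigl(\delta_{(\nu+(k_j-1)e_j)_j}\cdots\delta_{\nu_j}\bigr)$, which sends $\varphi_m^\nu$ to a scalar multiple of $\varphi_{m-k}^{\nu+k}$ of size $\sim\prod_j m_j^{k_j/2}$, plus finitely many corrections in which some factors $\delta$ are replaced by $x_j^{-1}$. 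Since each $x_j^{-1}$ falls on a function whose order has already been raised, every correction again maps the orthonormal system $\{\varphi_m^\nu\}$ into scalar multiples of an orthonormal Laguerre system and is individually $L^2$-bounded; composing the matched tower with $\mathcal{L}_\nu^{-|k|/2}$ produces the symbol $\prod_j m_j^{k_j/2}(4|m|+2|\nu|+2n)^{-|k|/2}$, uniformly bounded because $\prod_j m_j^{k_j}\le|m|^{|k|}$, and the corrections give symbols of strictly lower order. Hence $R$ is bounded on $L^2$.

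The heart of the argument is the kernel analysis, which dictates the sharp range. Starting from the Mehler/Hille--Hardy formula for the one-dimensional Laguerre heat kernel --- a coordinatewise product featuring the modified Bessel function $I_{\nu_j}$ and a Gaussian --- I would apply $\delta_\nu^k$ in the $x$-variable, use subordination $\mathcal{L}_\nu^{-|k|/2}=\f{1}{\Gamma(|k|/2)}\int_0^\infty t^{|k|/2-1}e^{-t\mathcal{L}_\nu}\,dt$, and aim for the pointwise bound
\[
\AbbsB{\delta_\nu^k e^{-t\mathcal{L}_\nu}(x,y)}\lesi t^{-n/2}\,\Big(1+\f{\sqrt t}{x}\Big)^{\gamma_{\nu+\sigma(k)}}\,\tyg\,e^{-c|x-y|^2/t}.
\]
The input factor $\tyg$ records the $y^{\nu_j+1/2}$ boundary behaviour of the order-$\nu$ kernel, while the output exponent is $\gamma_{\nu+\sigma(k)}$ because $\delta_{\nu_j}^{k_j}$ applied to an $x_j^{\nu_j+1/2}$ profile returns $x_j^{\nu_j+1/2}$ to leading order when $k_j$ is even and $x_j^{\nu_j+3/2}$ when $k_j$ is odd --- the parity shift encoded in $\sigma(k)$. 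Integrating against $t^{|k|/2-1}\,dt$, splitting at $t\sim|x-y|^2$, and applying Hölder's inequality turns these two weight factors into the desired off-diagonal estimates; the local integrability of $y^{-\gamma_\nu}$ forces the lower exponent $p_0=1/(1-\gamma_\nu)$ and that of $x^{-\gamma_{\nu+\sigma(k)}}$ forces the upper exponent $q_0=1/\gamma_{\nu+\sigma(k)}$.

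The main obstacle will be proving the displayed kernel bound uniformly in all $\nu_j\in(-1,\infty)$, above all in the regime $-1<\nu_j<-1/2$ where $\gamma_{\nu_j}>0$ and $I_{\nu_j}$ carries genuine boundary singularities. Applying $\delta_{\nu_j}^{k_j}$ lets as many as $k_j$ derivatives land on the Bessel factor and on the Gaussian exponent, and separating the competing singular powers of $x_j$ and $y_j$ --- isolating exactly $\gamma_\nu$ and $\gamma_{\nu+\sigma(k)}$ while keeping a clean Gaussian in $|x-y|$ --- will require sharp uniform asymptotics for $I_{\nu_j}$ and its derivatives both near $0$ and at $\infty$. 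Once these derivative estimates are in place, the off-diagonal bounds and the extrapolation step are routine, and the theorem holds for the full range $\nu\in(-1,\infty)^n$.
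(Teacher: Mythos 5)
Your skeleton (spectral $L^2$ bound, heat-kernel derivative estimates with parity-dependent boundary exponents, then an Auscher--Martell/Bernicot--Zhao weighted criterion) does mirror the paper's strategy, and you correctly guessed the decisive kernel bound including the $\sigma(k)$ parity phenomenon (modulo a typo: the prefactor must be $t^{-(n+|k|)/2}$, not $t^{-n/2}$, or the subordination integral diverges). However, two of your steps are genuinely broken, and the kernel bound itself is deferred rather than proved.

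First, the $L^2$ argument. After writing $\delta_{\nu_j}=\delta_{(\nu+e_j)_j}+x_j^{-1}$ and expanding, you claim every correction term ``maps $\{\varphi_m^\nu\}$ into scalar multiples of an orthonormal Laguerre system'' and is therefore individually $L^2$-bounded. This is false. A typical correction sends $\varphi_m^\nu$ to $-2\sqrt{m_j}\,x_j^{-1}\varphi_{m-e_j}^{\nu+e_j}$, and in one dimension $x^{-1}\varphi_{m-1}^{\nu+1}(x)=c_m L_{\nu+1}^{m-1}(x^2)\,x^{\nu+1/2}e^{-x^2/2}$; by the classical identity $L_{\nu+1}^{m-1}=\sum_{i=0}^{m-1}L_{\nu}^{i}$ this is a spread-out linear combination of $\varphi_0^\nu,\ldots,\varphi_{m-1}^\nu$, not a multiple of a single basis element, and the images for different $m$ are far from orthogonal (indeed $1/x_j$ is an unbounded multiplication operator). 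Handling precisely these $1/x_j$ terms is the main difficulty of the whole problem, and the paper never treats them spectrally: it proves improved kernel estimates for $\f{1}{x}\delta^k$ of the heat kernel (Propositions \ref{prop- delta k pt with 1/x k} and \ref{prop-2-stronger estimate on delta k }) and runs an induction on $|k|$ through factorizations such as $\delta_\nu^k\mathcal L_\nu^{-|k|/2}=\delta_\nu^{k-e_1}(\mathcal L_{\nu+e_1}+2)^{-(|k|-1)/2}\circ\delta_{\nu_1}\mathcal L_\nu^{-1/2}$, reserving the clean spectral computation for $k\in\{0,1\}^n$, where no corrections arise.

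Second, the extrapolation step cannot reach $q_0=\f{1}{\gamma_{\nu+\sigma(k)}}$ as you have set it up. The criterion used in the paper (Theorem \ref{BZ-thm}) assumes $T$ is \emph{already} bounded on $L^{q_0}$ --- which is what you are trying to prove --- and its hypothesis \eqref{eq1-BZ-bis} concerns the approximation operators $\mathcal A_{r_B}$ alone: since the heat kernel of $\mathcal L_\nu$ behaves like $x_j^{\nu_j+1/2}=x_j^{-\gamma_{\nu_j}}$ as $x_j\to0$ \emph{independently of} $k$, these operators genuinely fail $L^{p_0}\to L^{q_0}$ off-diagonal bounds once $q_0\ge\f{1}{\gamma_\nu}$, while $\f{1}{\gamma_{\nu+\sigma(k)}}>\f{1}{\gamma_\nu}$ exactly in the interesting regime (some $k_j$ odd, $\gamma_\nu>0$). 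The Auscher--Martell variants that avoid a priori $L^{q_0}$ boundedness require instead that $T\mathcal A_{r}f$ be dominated by averages of $|Tf|$, not of $|f|$; your proposed hypotheses ($L^2$ boundedness plus $L^{p_0}$--$L^{q_0}$ off-diagonal bounds with $|f|$ on the right for $T\mathcal A_r$ and $T(I-\mathcal A_r)$) only constrain $Tf$ on the annuli $S_j(B)$, $j\ge 2$, for $f$ supported in $B$, and say nothing near $B$: taking $\mathcal A_r=0$ shows such hypotheses are satisfied vacuously by any (possibly unbounded) multiplication operator, so no theorem can conclude $L^p$ boundedness from them. Run honestly, your scheme caps at $p<\f{1}{\gamma_\nu}$, the non-sharp range of Theorem \ref{mainthm-all even indices - weighted estimate}. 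The paper's missing ingredient is duality plus intertwining: it proves the range $p\le 2$ directly, then applies the criterion to the adjoint $\mathcal L_\nu^{-|k|/2}(\delta_\nu^*)^k$, where \eqref{eq- delta and eigenvector} allows the approximation family to be replaced by the semigroup of the raised operator $\mathcal L_{\nu+\sigma(k)}+2|k|$, whose kernel carries the favourable exponents $\gamma_{\nu_j+\sigma(k)_j}$; that is what makes the endpoint $\f{1}{\gamma_{\nu+\sigma(k)}}$ attainable. Finally, your central kernel estimate is flagged as ``the main obstacle'' and left open, to be attacked via uniform Bessel asymptotics; in the paper it is Theorem \ref{thm-delta k pnu}, proved by induction and semigroup splitting via \eqref{eq- del nu and del nu + 1} from known one-dimensional bounds, so the technical core of the argument is deferred in your proposal, not closed.
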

Before giving some  comments on Theorem \ref{mainthm-genral case}, we would like to introduce the following two maximal functions
\[
\mathcal M_{\mathcal L_\nu} f(x) =\sup_{t>0} |e^{-t\mathcal L_\nu}f(x)|
\]
and
\[
\mathcal M_{k, \mathcal L_\nu} f(x) =\sup_{t>0} |\delta_\nu^ke^{-t\mathcal L_\nu}f(x)|, \ \ \ k\in \mathbb N^n.
\]
By using the kernel estimates in Proposition \ref{prop- delta  pt}, Theorem \ref{thm1- kernel est  n ge 2} and Proposition \ref{prop1-boundedness}, it is straightforward that the maximal function $\mathcal M_{\mathcal L_\nu}$ is bounded on $L^p(\Rn_+)$ for $\f{1}{1-\gamma_\nu}<p<\f{1}{\gamma_\nu}$, while the maximal function $\mathcal M_{k, \mathcal L_\nu}$ is bounded on $L^p(\Rn_+)$ for $\f{1}{1-\gamma_\nu}<p<\f{1}{\gamma_{\nu+\sigma(k)}}$. Note that $\f{1}{\gamma_\nu}\le \f{1}{\gamma_{\nu+\sigma(k)}}$.\\

We turn back to Theorem \ref{mainthm-genral case}.  In the case $n=1$, we have $\sigma(k)=1$ as $k$ is odd and $\sigma(k)=0$ as $k$ is even. Hence, Theorem \ref{mainthm-genral case} reads:
\begin{itemize}
	\item if $k$ is even, the Riesz transform $\delta_\nu^k \mathcal L_\nu^{-|k|/2}$ is bounded on $L^p_w(\Rn_+)$ with $\f{1}{1-\gamma_\nu}<p<\f{1}{\gamma_\nu}$ and $w\in A_{p(1-\gamma_\nu)}\cap RH_{(\f{1}{p\gamma_\nu})'}$;
	\item if $k$ is odd, then $\gamma_{\nu+1}=0$ and hence Riesz transform $\delta_\nu^k \mathcal L_\nu^{-|k|/2}$ is bounded on $L^p_w(\mathbb R_+)$ with $\f{1}{1-\gamma_\nu}<p<1$ and $w\in A_{p(1-\gamma_\nu)}$.
\end{itemize}

\medskip

Our result is still new even in the case \( n=1 \), since Theorem \ref{mainthm-genral case} provides weighted estimates with Muckenhoupt weights instead of power weights as in \cite{Betancor}. When \( k \) is even, the \( L^p \)-boundedness of the Riesz transform \( \delta_\nu^k \mathcal{L}_\nu^{-|k|/2} \) holds for \( \frac{1}{1-\gamma_\nu} < p < \frac{1}{\gamma_\nu} \), which is similar to the boundedness range of the maximal function \( \mathcal{M}_{\mathcal{L}_\nu} \). However, when \( k \) is odd, the Riesz transform \( \delta_\nu^k \mathcal{L}_\nu^{-|k|/2} \) is bounded on \( L^p(\mathbb{R}_+^n) \) for \( \frac{1}{1-\gamma_\nu} < p < \infty \). Surprisingly, in this case, the range of \( p \) is larger than the range \( \frac{1}{1-\gamma_\nu} < p < \frac{1}{\gamma_\nu} \) for the boundedness of the maximal function and the Riesz transforms when $k$ is  even.  However, the boundedness range of the Riesz transform \( \delta_\nu^k \mathcal{L}_\nu^{-|k|/2} \) coincides with that of the maximal function \( \mathcal{M}_{k, \mathcal{L}_\nu} \). One possible explanation for this phenomenon is that we obtain better estimates on the higher-order derivatives of the heat kernel when \( k \) is odd. See Theorem \ref{thm-delta k pnu}.  

For the case \( n \geq 2 \), since \( \gamma_{\nu_j+1} = 0 \) for \( \nu_j \in (-1,\infty) \), we have  
\[
\gamma_{\nu+\sigma(k)} = \max\{\gamma_{\nu_j}: k_j \text{ is even}\}.
\]  
Hence, we conclude the following:
\begin{itemize}
	\item If all entries of \( k \) are even, then \( \sigma(k) = 0 \), and the Riesz transform is bounded on \( L^p_w(\mathbb{R}_+^n) \) for \( \frac{1}{1-\gamma_\nu} < p < \f{1}{\gamma_\nu} \) and \( w \in A_{p(1-\gamma_\nu)} \cap RH_{(\frac{1}{p\gamma_\nu})'} \). In this case, the ranges of \( p \) for the boundedness of the Riesz transform and the maximal function \( \mathcal{M}_{\mathcal{L}_\nu} \) are the same.  
	\item If all entries of \( k \) are odd, then \( \gamma_{\nu+\sigma(k)} = 0 \). In this case, the Riesz transform is bounded on \( L^p_w(\mathbb{R}_+^n) \) for \( \frac{1}{1-\gamma_\nu} < p < \infty \) and \( w \in A_{p(1-\gamma_\nu)} \), and the range of \( p \) is larger than that for the boundedness of the maximal function \( \mathcal{M}_{\mathcal{L}_\nu} \).  
	\item If at least one entry of \( k \) is odd, then we have \( \gamma_{\nu+\sigma(k)} \leq \gamma_\nu \). Theorem \ref{mainthm-genral case} tells us that in this case, the range of \( p \) for the boundedness of the Riesz transform is larger than that for the boundedness of the maximal function \( \mathcal{M}_{\mathcal{L}_\nu} \).  
\end{itemize}  

\bigskip
Some comments on the techniques used in the paper are in order. Firstly, the approach in \cite{Betancor} for the case \( n=1 \) relies on the difference between the kernels of the Riesz transform associated with Laguerre expansions and the Riesz transform associated with Hermite operators. However, this approach might not be applicable to the higher-dimensional case \( n \geq 2 \) or for Muckenhoupt weights.  

Secondly, although the higher-order Riesz transform \( \delta_\nu^k \mathcal{L}_\nu^{-|k|/2} \) in the case \( n \geq 2 \) has been studied in \cite{BD}, the restriction \( \nu \in [-1/2, \infty)^n \) is less challenging since the higher-order Riesz transform is a Calderón-Zygmund operator. Consequently, the boundedness of the Riesz transform does not depend on \( k \).  

To handle the full range \( \nu \in (-1, \infty)^n \), new ideas and techniques are required. In this paper, we employ improved estimates for the derivatives of heat kernels. See, for example, Theorem \ref{thm-delta k pnu} and Proposition \ref{prop-2-stronger estimate on delta k }. Additionally, since the Riesz transform is not a Calderón-Zygmund operator, it is natural to develop and apply techniques from the theory of singular integrals beyond the Calderón-Zygmund setting. For this direction, we refer to \cite{DM, AM1, AM2, BZ} and the references therein.

\bigskip

The paper is organized as follows. In Section 2, we recall the definitions of Muckenhoupt weights, some elementary estimates  and a theorem regarding the boundedness criteria of singular integrals beyond the Calder\'on-Zygmund theory. Section 3 is dedicated to proving kernel estimates related to the heat semigroup $e^{-t\mathcal{L}_\nu}$. Section 4 is devoted to the proofs of Theorem \ref{mainthm-genral case}.

\bigskip
 
 Throughout the paper, we always use $C$ and $c$ to denote positive constants that are independent of the main parameters involved but whose values may differ from line to line. We will write $A\lesi B$ if there is a universal constant $C$ so that $A\leq CB$ and $A\sim B$ if $A\lesi B$ and $B\lesi A$. For $a \in \mathbb{R}$, we denote the integer part of $a$ by $\lfloor a\rfloor$.  For a given ball $B$, unless specified otherwise, we shall use $x_B$ to denote the center and $r_B$ for the radius of the ball.
 
In the whole paper, we will often use the following inequality without any explanation $e^{-x}\le c(\alpha) x^{-\alpha}$ for any $\alpha>0$ and $x>0$.
\section{Preliminaries}

We start with some notations which will be used frequently. For a measurable subset $E\subset \mathbb R^n_+$ and a measurable function $f$ we denote
$$
\fint_E f(x)dx=\f{1}{|E|}\int_E f(x)dx.
$$
Given a ball $B$, we denote $S_j(B)=2^{j}B\backslash 2^{j-1}B$ for $j=1, 2, 3, \ldots$, and we set $S_0(B)=B$.

Let $1\leq q<\infty$. A nonnegative locally integrable function $w$ belongs to the  Muckenhoupt class  $A_q$, say $w\in A_q$, if there exists a positive constant $C$ so that
$$\Big(\fint_B w(x)dx\Big)\Big(\fint_B w^{-1/(q-1)}(x)dx\Big)^{q-1}\leq C, \quad\mbox{if}\; 1<q<\infty$$
and
$$
\fint_B w(x)dx\leq C \mathop{\mbox{ess-inf}}\limits_{x\in B}w(x),\quad{\rm if}\; q=1,
$$
for all balls $B$ in $\mathbb R^d$. We define $A_\infty =\bigcup_{1\le p<\vc} A_p$.


The reverse H\"older classes are defined in the following way: $w\in RH_r, 1 < r < \infty$, if there is a constant $C$ such that for
any ball $B \subset \mathbb R^n_+$,
$$
\Big(\fint_B w^r (x) dx\Big)^{1/r} \leq C \fint_B w(x)dx.
$$
The endpoint $r = \infty$ is given by the condition: $w \in RH_\infty$ whenever, there is a constant $C$ such that for any ball
$B \subset \mathbb R^n_+$,
$$
w(x)\leq C \fint_B w(y)dy  \ \text{for a.e. $x\in B$}.
$$

For $w \in A_\vc$ and $0< p <\infty$, the weighted space $L^p_w(\mathbb R^n_+)$ is defined as  the space of $w(x)dx$-measurable functions $f$ such that
$$\|f\|_{L^p_w(\mathbb R^n_+)}:=\Big(\int_{\mathbb R^n_+} |f(x)|^p w(x)dx\Big)^{1/p}<\infty.$$

It is well-known that the power weight $w(x)=x^\alpha \in A_p$ if and only if $-n<\alpha<n(p-1)$. Moreover, $w(x)=x^\alpha \in RH_q$ if and only if $\alpha q>-n$.

We sum up some of the properties of Muckenhoupt classes and reverse H\"older classes in the following
results. See \cite{Du, JN}.
\begin{lem}\label{weightedlemma1}
	The following properties hold:
	\begin{enumerate}[{\rm (i)}]
		\item $w\in A_p, 1<p<\vc$ if and only if $w^{1-p'}\in A_{p'}$.
		\item $A_1\subset A_p\subset A_q$ for $1\leq p\leq q\leq \infty$.
		\item $ RH_q \su RH_p$ for $1< p\leq q< \infty$.
		\item If $w \in A_p, 1 < p < \vc$, then there exists $1 < q < p$ such that $w \in A_q$.
		\item If $w \in RH_q, 1 < q < \vc$, then there exists $q < p < \infty$ such that $w \in RH_p$.
		\item $A_\vc =\cup_{1\leq p<\vc}A_p = \cup_{1< p< \vc}RH_p$.
		\item Let $1<p_0 < p < q_0<\vc$. Then we have
		$$
		w\in A_{\f{p}{p_0}}\cap RH_{(\f{q_0}{p})'}\Longleftrightarrow
		w^{1-p'}\in A_{\f{p'}{q'_0}}\cap RH_{(\f{p'_0}{p'})'}.
		$$
	\end{enumerate}
\end{lem}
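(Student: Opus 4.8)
The plan is to treat items (i)--(vi) as the classical structure theory of Muckenhoupt and reverse-H\"older classes and to reserve the real work for the duality identity (vii).

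For (i)--(vi) I would argue as follows. The duality (i) is immediate from the definitions once one notes that $w^{1-p'}=w^{-1/(p-1)}$, so that the $A_{p'}$-quantity of $w^{1-p'}$ is a power of the $A_p$-quantity of $w$. The nesting (ii) and (iii) follow from Jensen's (equivalently H\"older's) inequality applied to the averages $\fint_B$. The self-improvement statements (iv) and (v) are the openness properties of these scales: (iv) is the reverse-H\"older inequality satisfied by every $A_p$ weight (Coifman--Fefferman), and (v) is Gehring's lemma; both are classical and I would simply invoke them, see \cite{Du,JN}. Finally (vi) is the standard identification of $A_\infty$ as the union of the $A_p$-classes and, simultaneously, of the $RH_p$-classes, which follows from (iv),(v) together with the reverse-H\"older description of $A_\infty$.

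The substance of the lemma is (vii), and here the plan is to collapse each two-sided condition ``$A\cap RH$'' into a single Muckenhoupt condition on a power of the weight and then apply the duality (i). Concretely, I would use the Johnson--Neugebauer change-of-class identity from \cite{JN}: for $1<P<\vc$ and $1<S<\vc$,
\[
w\in A_{P}\cap RH_{S}\iff w^{S}\in A_{S(P-1)+1}.
\]
Applying it on the left-hand side with $P=p/p_0$ and $S=(q_0/p)'$ turns $w\in A_{p/p_0}\cap RH_{(q_0/p)'}$ into $w^{(q_0/p)'}\in A_{T_L}$, and a direct simplification (using only $\tfrac1p+\tfrac1{p'}=1$ and the analogous relations for $p_0,q_0$) gives the clean value $T_L=\frac{p(q_0-p_0)}{p_0(q_0-p)}$. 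Applying the same identity on the right-hand side with $P=p'/q_0'$ and $S=(p_0'/p')'$ turns $w^{1-p'}\in A_{p'/q_0'}\cap RH_{(p_0'/p')'}$ into $(w^{1-p'})^{(p_0'/p')'}\in A_{T_R}$ with $T_R=\frac{p'(p_0'-q_0')}{q_0'(p_0'-p')}$. All four exponents lie in the admissible range $(1,\vc)$ precisely because $1<p_0<p<q_0<\vc$.

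The heart of the matter --- and the step I expect to be the main obstacle, since it is pure but delicate exponent bookkeeping --- is to verify the two algebraic identities
\[
\tfrac{1}{T_L}+\tfrac{1}{T_R}=1
\qquad\text{and}\qquad
\Big(\tfrac{q_0}{p}\Big)'\,(1-T_L')=(1-p')\,\Big(\tfrac{p_0'}{p'}\Big)'.
\]
The first says $T_R=T_L'$, i.e.\ the two target Muckenhoupt exponents are conjugate; the second says that the weight on the right, $(w^{1-p'})^{(p_0'/p')'}$, equals $\big(w^{(q_0/p)'}\big)^{1-T_L'}$, i.e.\ it is the $A_{T_L}$-dual weight of $w^{(q_0/p)'}$. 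Granting these, property (i) applied at the exponent $T_L$ gives
\[
w^{(q_0/p)'}\in A_{T_L}\iff \big(w^{(q_0/p)'}\big)^{1-T_L'}\in A_{T_L'}=A_{T_R},
\]
whose right-hand side is exactly the condition $(w^{1-p'})^{(p_0'/p')'}\in A_{T_R}$; chaining the two Johnson--Neugebauer equivalences through this one then yields (vii). I would carry out the two displayed identities by clearing denominators after rewriting every primed exponent as $p'=p/(p-1)$, $p_0'=p_0/(p_0-1)$, $q_0'=q_0/(q_0-1)$; the cancellations are the only computational content.
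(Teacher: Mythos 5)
Your proposal is correct: the paper offers no proof of this lemma at all, merely citing \cite{Du, JN}, and your argument --- classical structure theory for (i)--(vi), plus the Johnson--Neugebauer change-of-class identity $w\in A_P\cap RH_S\iff w^S\in A_{S(P-1)+1}$ chained through the duality (i) for (vii) --- is precisely the derivation that the citation to \cite{JN} points to. I verified your exponent bookkeeping: $T_L=\frac{p(q_0-p_0)}{p_0(q_0-p)}$ and $T_R=\frac{p'(p_0'-q_0')}{q_0'(p_0'-p')}$ indeed satisfy $\frac{1}{T_L}+\frac{1}{T_R}=1$, and both sides of your weight-exponent identity $\bigl(\frac{q_0}{p}\bigr)'(1-T_L')=(1-p')\bigl(\frac{p_0'}{p'}\bigr)'$ equal $\frac{-p_0}{p-p_0}$, so the chain of equivalences closes.
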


For $r>0$, the Hardy-Littlewood maximal function $\mathcal{M}_r$ is defined by
$$
\mathcal{M}_rf(x)=\sup_{B\ni x}\Big(\f{1}{|B|}\int_B|f(y)|^r\,dy\Big)^{1/r}, \ x\in \mathbb R^n_+,
$$
where the supremum is taken over all balls $B$ containing $x$. When $r=1$, we write $\mathcal{M}$ instead of $\mathcal{M}_1$.

We now record the following results concerning the weak type estimates and the weighted estimates of the maximal functions.
\begin{lem}[\cite{S}]\label{Lem-maximalfunction}
	Let $0< r<\vc$. Then we have for $p>r$,
	$$
	\|\mathcal{M}_rf\|_{L^p(\Rn_+)}\lesi \|f\|_{L^p(\Rn_+)}.
	$$
\end{lem}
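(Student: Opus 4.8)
The plan is to reduce the statement to the classical Hardy--Littlewood maximal theorem for $\mathcal{M}=\mathcal{M}_1$ by a simple power substitution. The key observation is the pointwise identity
$$
\mathcal{M}_r f(x) = \Big(\mathcal{M}\big(|f|^r\big)(x)\Big)^{1/r}, \qquad x\in \Rn_+,
$$
which is immediate from the definition: since $t\mapsto t^{1/r}$ is increasing, the exponent $1/r$ passes outside the supremum over balls $B\ni x$, giving
$$
\mathcal{M}_r f(x)=\sup_{B\ni x}\Big(\fint_B |f|^r\Big)^{1/r}=\Big(\sup_{B\ni x}\fint_B |f|^r\Big)^{1/r}.
$$
Thus it suffices to control $\mathcal{M}$ acting on the function $g:=|f|^r$.

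Next I would invoke the classical boundedness of $\mathcal{M}$ on $L^s(\Rn_+)$ for every $s>1$. Set $s=p/r$; the hypothesis $p>r$ guarantees $s>1$, so $\mathcal{M}$ is bounded on $L^s(\Rn_+)$. Writing out the $L^p$ norm and using the identity above,
$$
\|\mathcal{M}_r f\|_{L^p(\Rn_+)}^p = \int_{\Rn_+} \big(\mathcal{M} g(x)\big)^{p/r}\,dx = \|\mathcal{M} g\|_{L^{s}(\Rn_+)}^{s} \lesi \|g\|_{L^{s}(\Rn_+)}^{s}.
$$
Since $\|g\|_{L^s(\Rn_+)}^s = \int_{\Rn_+}|f|^{rs}\,dx = \int_{\Rn_+}|f|^{p}\,dx = \|f\|_{L^p(\Rn_+)}^p$, taking $p$-th roots yields $\|\mathcal{M}_r f\|_{L^p(\Rn_+)}\lesi \|f\|_{L^p(\Rn_+)}$, as desired.

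It remains only to justify the single ingredient used, namely the $L^s$-boundedness of $\mathcal{M}$ for $s>1$. This is the classical Hardy--Littlewood--Wiener theorem: $\mathcal{M}$ is of weak type $(1,1)$, which follows from the Vitali covering lemma applied to a finite collection of balls realizing (up to a factor) the supremum, and it is trivially bounded on $L^\infty$; the Marcinkiewicz interpolation theorem then gives strong type $(s,s)$ for all $1<s<\infty$. Since $\Rn_+$ endowed with Lebesgue measure and Euclidean balls is a doubling space, the covering and interpolation scheme carries over verbatim. I do not expect any genuine obstacle here: the only mild points are the restriction of the balls to $\Rn_+$, which affects neither the covering argument nor the interpolation, and the role of the hypothesis $p>r$, which is exactly what places $s=p/r$ in the range $s>1$ where the maximal theorem applies.
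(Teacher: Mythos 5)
Your proof is correct. The paper does not prove this lemma at all---it is quoted directly from Stein's book \cite{S}---and your argument (the pointwise identity $\mathcal{M}_r f = \bigl(\mathcal{M}(|f|^r)\bigr)^{1/r}$ followed by the classical Hardy--Littlewood theorem on $L^{p/r}$ with $p/r>1$) is exactly the standard proof that the cited reference supplies, including the correct handling of the harmless restriction to $\Rn_+$.
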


We will end this section by some simple estimates whose proofs will be omitted. See for example \cite[Eq. (35)]{B2}.
\begin{lem}\label{lem-elementary lemma}
	For $a\in [0,1/2)$ and $c>0$, define
	\[
	H_{t,a,c}(x,y) = \f{1}{\sqrt t}\exp\Big(-\f{|x-y|^2}{ct}\Big)\Big(1+\f{\sqrt t}{x} \Big)^{a}\Big(1+\f{\sqrt t}{y} \Big)^{a}
	\]
	for $t>0$ and $x,y>0$.
	
	Then we have
	\begin{equation}\label{eq-product H less than H}
		\int_0^\vc  H_{t,a,c}(x,z)H_{t,a,c}(z,y)dz\lesi  H_{t,a,4c}(x,y)
	\end{equation}
	for $t>0$ and $x,y>0$.
\end{lem}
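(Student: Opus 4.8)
The plan is to prove Lemma \ref{lem-elementary lemma}, that is, the semigroup-type convolution bound
\[
\int_0^\vc H_{t,a,c}(x,z)\,H_{t,a,c}(z,y)\,dz \lesi H_{t,a,4c}(x,y),
\]
by a rescaling argument followed by splitting the integration variable $z$ into the two regions $z\le \sqrt t$ and $z>\sqrt t$. First I would absorb the prefactors: writing out the product, the integrand carries $\f{1}{t}\exp\big(-\f{|x-z|^2}{ct}-\f{|z-y|^2}{ct}\big)$ times the factors $\big(1+\f{\sqrt t}{x}\big)^a\big(1+\f{\sqrt t}{z}\big)^{2a}\big(1+\f{\sqrt t}{y}\big)^a$. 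The outer factors in $x$ and $y$ already match the target $H_{t,a,4c}(x,y)$ up to the constant $c\mapsto 4c$ in the exponent, so the entire task reduces to controlling
\[
\f{1}{\sqrt t}\int_0^\vc \exp\Big(-\f{|x-z|^2}{ct}-\f{|z-y|^2}{ct}\Big)\Big(1+\f{\sqrt t}{z}\Big)^{2a}\,dz \lesi \exp\Big(-\f{|x-y|^2}{4ct}\Big),
\]
uniformly in $x,y>0$.

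The Gaussian part is handled by the elementary inequality $|x-z|^2+|z-y|^2\ge \f12|x-y|^2$, which immediately pulls out the factor $\exp\big(-\f{|x-y|^2}{4ct}\big)$ appearing on the right-hand side and leaves a residual Gaussian $\exp\big(-\f{|x-z|^2}{2ct}\big)$ (or the analogous one in $y$) to damp the remaining $z$-integral. So the core estimate becomes
\[
\f{1}{\sqrt t}\int_0^\vc \exp\Big(-\f{|x-z|^2}{2ct}\Big)\Big(1+\f{\sqrt t}{z}\Big)^{2a}\,dz \lesi 1.
\]
For this I would split at $z=\sqrt t$. On the region $z>\sqrt t$ the factor $\big(1+\f{\sqrt t}{z}\big)^{2a}\le 2^{2a}$ is bounded, and $\f{1}{\sqrt t}\int \exp\big(-\f{|x-z|^2}{2ct}\big)\,dz\lesi 1$ is just the standard Gaussian mass (after the change of variable $z=\sqrt t\, u$). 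On the region $0<z\le\sqrt t$ we bound $\big(1+\f{\sqrt t}{z}\big)^{2a}\lesi \big(\f{\sqrt t}{z}\big)^{2a}$ and drop the Gaussian by $1$, so that we must check
\[
\f{1}{\sqrt t}\int_0^{\sqrt t}\Big(\f{\sqrt t}{z}\Big)^{2a}\,dz = t^{a-1/2}\int_0^{\sqrt t} z^{-2a}\,dz \lesi 1,
\]
which holds precisely because $a\in[0,1/2)$ makes $z^{-2a}$ integrable near the origin, yielding $t^{a-1/2}\cdot\f{(\sqrt t)^{1-2a}}{1-2a}=\f{1}{1-2a}$, a constant independent of $t$.

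The one point requiring care — and the step I expect to be the main (modest) obstacle — is the bookkeeping in the split at $z=\sqrt t$: one must make sure the residual Gaussian used to control the tail region is genuinely available after extracting $\exp\big(-\f{|x-y|^2}{4ct}\big)$, and that the singular near-origin region is handled without the Gaussian, where the hypothesis $a<1/2$ is exactly what guarantees convergence. By symmetry of the integrand in the roles played by the two Gaussians one may always assign the surviving Gaussian so that the argument closes, and combining the two regions gives the uniform bound, completing the proof.
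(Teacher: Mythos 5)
Your proof is correct and complete: the extraction of $\exp\big(-\f{|x-y|^2}{4ct}\big)$ via $|x-z|^2+|z-y|^2\ge \f12|x-y|^2$, the split at $z=\sqrt t$, the Gaussian mass bound on $\{z>\sqrt t\}$, and the computation $t^{a-1/2}\int_0^{\sqrt t}z^{-2a}\,dz=\f{1}{1-2a}$ (which is exactly where $a<1/2$ enters) all check out, and your worry about assigning the residual Gaussian is unfounded since neither region actually needs the Gaussian and the $z^{-2a}$ factor simultaneously. Note the paper itself omits the proof entirely, citing \cite[Eq. (35)]{B2}, so your argument is the standard self-contained one filling that reference.
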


We now recall a theorem which is taken from \cite[Theorem 6.6]{BZ} on a criterion for the singular integrals to be bounded on the weighted Lebesgue spaces.
\begin{thm}\label{BZ-thm}
	Let $1\leq p_0< q_0< \vc$ and let $T$ be a linear operator. Assume that $T$ can be extended to be bounded on $L^{q_0}$. Assume that there exists a family of operators $\{\mathcal{A}_t\}_{t>0}$ satisfying that for $j\geq 2$ and every ball $B$
	\begin{equation}\label{eq1-BZ}
		\Big(\fint_{S_j(B)}|T(I-\mathcal{A}_{r_B})f|^{q_0}dx\Big)^{1/q_0}\leq
		\alpha(j)\Big(\fint_B |f|^{q_0}dx\Big)^{1/q_0},
	\end{equation}
	and
	\begin{equation}\label{eq1-BZ-bis}
		\Big(\fint_{S_j(B)}|\mathcal{A}_{r_B}f|^{q_0}dx\Big)^{1/q_0}\leq
		\alpha(j)\Big(\fint_B |f|^{p_0}dx\Big)^{1/p_0},
	\end{equation}
	for all $f\in C^\vc(\Rn_+)$ supported in $B$. If $\sum_j \alpha(j)2^{jd}<\vc$, then $T$ is bounded on $L^p_w(\mathbb R^n_+)$ for all $p\in (p_0,q_0)$ and $w\in
	A_{\f{p}{p_0}}\cap RH_{(\f{q_0}{p})'}$.
\end{thm}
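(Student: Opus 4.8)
The statement is an abstract, Auscher--Martell type boundedness criterion for singular integrals lying beyond the Calder\'on--Zygmund class, and I would prove it by the by-now standard combination of a sharp maximal function estimate with a weighted good-$\lambda$ (absorption) argument. Throughout write $\Lambda:=\sum_{j}\alpha(j)2^{jd}$, which is finite by hypothesis, denote by
\[
M^{\#}h(x)=\sup_{B\ni x}\inf_{c\in\CC}\fint_{B}|h-c|\,dy
\]
the Fefferman--Stein sharp maximal function, and let $\mathcal M_{p_0},\mathcal M_{q_0}$ be the maximal operators of Lemma \ref{Lem-maximalfunction}. The two hypotheses play complementary roles: \eqref{eq1-BZ} is an $L^{q_0}\to L^{q_0}$ off-diagonal bound for the oscillation $T(I-\mathcal A_{r_B})$, carrying the cancellation usually supplied by a kernel H\"older condition, while \eqref{eq1-BZ-bis} is an $L^{p_0}\to L^{q_0}$ \emph{improving} off-diagonal bound for the averaging operators $\mathcal A_{r_B}$, which replaces the size decay of a Calder\'on--Zygmund kernel and produces the gain from $p_0$ to $q_0$. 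The factors $\alpha(j)$ with $\sum_j\alpha(j)2^{jd}<\vc$ convert these localized annular estimates into convergent global sums.

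\textbf{Main pointwise estimate.} I would first show that for $f\in C^{\vc}(\Rn_+)$ bounded with bounded support and a.e.\ $x\in\Rn_+$,
\[
\big(M^{\#}(|Tf|^{p_0})(x)\big)^{1/p_0}\lesi \Lambda\,\mathcal M_{p_0}f(x)+\Lambda\,\mathcal M_{q_0}(Tf)(x).
\]
Fix a ball $B$ with $x\in B$, decompose $f=\sum_{i\ge 0}f\mathbf 1_{S_i(B)}$, and split, on $B$, $Tf=T\mathcal A_{r_B}f+T(I-\mathcal A_{r_B})f$. For the averaging part $T\mathcal A_{r_B}f$ I would pass to the exponent $q_0$ by H\"older (licit since $p_0\le q_0$), use the $L^{q_0}$-boundedness of $T$, and then apply \eqref{eq1-BZ-bis} to each annular piece $\mathcal A_{r_B}(f\mathbf 1_{S_i(B)})$ (in the stated direction and, by the symmetric input/output role of the balls in an off-diagonal estimate, its companion direction), summing the series with the factor $\Lambda$; this produces the good term $\Lambda\,\mathcal M_{p_0}f(x)$. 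For the oscillation part $T(I-\mathcal A_{r_B})f$, the far annuli are handled by \eqref{eq1-BZ} and the comparable scales by the $L^{q_0}$-boundedness of $T$; since \eqref{eq1-BZ} only affords $L^{q_0}$ control, this contribution is carried by the self-referential term $\Lambda\,\mathcal M_{q_0}(Tf)(x)$ rather than improved.

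\textbf{Passage to weighted norms.} Here the two weight conditions enter through distinct mechanisms. Because $w\in A_{p/p_0}\subset A_{\vc}$, the Fefferman--Stein inequality $\|Tf\|_{L^p_w}\lesi \|M^{\#}(|Tf|^{p_0})^{1/p_0}\|_{L^p_w}$ is available once the left-hand side is known to be finite. The term $\Lambda\,\mathcal M_{p_0}f$ is controlled by $\|\mathcal M_{p_0}f\|_{L^p_w}\lesi\|f\|_{L^p_w}$, which is exactly the boundedness of $\mathcal M$ on $L^{p/p_0}_w$, i.e.\ the condition $w\in A_{p/p_0}$. The term $\Lambda\,\mathcal M_{q_0}(Tf)$ cannot be treated this way, since $\mathcal M_{q_0}$ is \emph{not} bounded on $L^{p}_w$ when $p<q_0$; instead it must be absorbed. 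This is accomplished by a weighted good-$\lambda$ inequality: performing a Calder\'on--Zygmund decomposition of the level sets $\{\mathcal M_{q_0}(Tf)>\lambda\}$ and comparing $Tf$ on each Whitney ball with its $\mathcal A_{r_Q}$-smoothing via \eqref{eq1-BZ}--\eqref{eq1-BZ-bis}, one obtains, for small $\gamma$, a bound of the shape $w(\{\mathcal M_{q_0}(Tf)>A\lambda,\ \mathcal M_{p_0}f\le\gamma\lambda\})\le C\,\gamma^{\delta}\,w(\{\mathcal M_{q_0}(Tf)>\lambda\})$; the passage from the $L^{q_0}$ estimate of $Tf$ over a ball to the weighted $L^p_w$ measure of that ball is precisely where the reverse H\"older condition $w\in RH_{(q_0/p)'}$ is used. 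Integrating in $\lambda$ and choosing $\gamma$ small enough to absorb the $\gamma^{\delta}$-term yields $\|\mathcal M_{q_0}(Tf)\|_{L^p_w}\lesi \|\mathcal M_{p_0}f\|_{L^p_w}\lesi\|f\|_{L^p_w}$, and hence the claimed bound for $T$.

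\textbf{Main obstacle.} The crux is the absorption step, legitimate only if $\|\mathcal M_{q_0}(Tf)\|_{L^p_w}<\vc$ a priori; for $f$ bounded with bounded support this follows from $Tf\in L^{q_0}$ (the standing hypothesis on $T$) together with the local embedding $L^{q_0}\hookrightarrow L^p_w$ furnished by $w\in RH_{(q_0/p)'}$ and a truncation of $\mathcal M_{q_0}(Tf)$, after which the restriction to such $f$ is removed by density. The self-duality recorded in Lemma \ref{weightedlemma1}(vii), namely $w\in A_{p/p_0}\cap RH_{(q_0/p)'}\Leftrightarrow w^{1-p'}\in A_{p'/q_0'}\cap RH_{(p_0'/p')'}$, is the structural reason these are the correct classes: it reflects that one interpolates with change of measure between the endpoints $p_0$ and $q_0$. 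The technical heart, and the step most prone to error, is extracting the genuine power decay $\gamma^{\delta}$ in the good-$\lambda$ inequality from the non-Calder\'on--Zygmund hypotheses \eqref{eq1-BZ}--\eqref{eq1-BZ-bis}, that is, quantifying how the off-diagonal decay $\alpha(j)$ and the $p_0\to q_0$ improvement interact through the stopping-time argument.
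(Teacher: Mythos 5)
The first thing to note is that the paper contains no proof of this statement at all: Theorem \ref{BZ-thm} is imported verbatim from \cite[Theorem 6.6]{BZ} (proved there for $q_0=2$), with only the remark that the argument of \cite{BZ} extends to general $q_0\in(1,\infty)$. So the relevant comparison is with the proof in \cite{BZ}, which does \emph{not} follow the Auscher--Martell sharp-function/good-$\lambda$ scheme you propose. It proceeds instead through an atomic (Calder\'on--Zygmund stopping-time) decomposition of $f$ itself and an abstract Hardy-space interpolation with $L^{q_0}$: the atoms are of the form $(I-\mathcal A_{r_B})b$ with $b$ supported in $B$, hypothesis \eqref{eq1-BZ} is used to show $T$ maps such atoms into $L^1$ with norm controlled by $\sum_j\alpha(j)2^{jd}$, and \eqref{eq1-BZ-bis} enters the identification of the interpolation spaces. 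The structural point is that in this scheme the operators $\mathcal A_{r_B}$ and $T(I-\mathcal A_{r_B})$ are only ever applied to functions supported in the very ball $B$ that defines them --- which is exactly, and only, what the stated hypotheses control.

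This is also precisely where your proposal has a genuine gap. Your sharp-function bound and your good-$\lambda$ inequality both require decomposing a globally defined $f$ as $\sum_i f\mathbf 1_{S_i(B)}$ around each ball $B$ and estimating the contribution of each annular piece \emph{on} $B$: you need bounds for $T(I-\mathcal A_{r_B})(f\mathbf 1_{S_i(B)})$ and $\mathcal A_{r_B}(f\mathbf 1_{S_i(B)})$ measured on $B$, i.e.\ input far from $B$, output on $B$, operator still tied to $r_B$. Hypotheses \eqref{eq1-BZ}--\eqref{eq1-BZ-bis} control the transposed configuration (input supported in $B$, output on $S_j(B)$), and neither direction implies the other: an off-diagonal estimate for a fixed operator is not symmetric under exchanging input and output sets --- dualizing it yields an estimate for the \emph{adjoint} operator at the conjugate exponents, not a reversed estimate for the same operator. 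Your parenthetical appeal to ``the symmetric input/output role of the balls'' is therefore false in general, and every subsequent application of \eqref{eq1-BZ} or \eqref{eq1-BZ-bis} to annular pieces is unjustified. (In this paper's applications the issue is invisible because Lemma \ref{lem Lpq} supplies both directions, but the abstract theorem assumes only one.) Two honest repairs: (i) dualize first --- the stated hypotheses for $T$ become off-diagonal estimates in the direction your argument needs, but for $T^*$ and the family $\{\mathcal A_t^*\}$, with $(q_0',p_0')$ playing the roles of $(p_0,q_0)$; run your good-$\lambda$ argument for $T^*$ on $L^{p'}_{w^{1-p'}}$ and transfer back, Lemma \ref{weightedlemma1}(vii) being exactly the statement that the weight classes match under this duality (this is in fact how the present paper uses the theorem for the adjoint Riesz transforms); or (ii) follow \cite{BZ} and decompose $f$ itself by a stopping-time argument so that each $\mathcal A_{r_{B_i}}$ acts only on a piece supported in its own ball $B_i$. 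A lesser remark: even if the direction issue were fixed, your sharp-function step is redundant, since the good-$\lambda$ inequality for $\mathcal M_{q_0}(Tf)$ against $\mathcal M_{p_0}f$ together with $|Tf|\le\mathcal M_{q_0}(Tf)$ a.e.\ already concludes the proof.
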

Note that \cite[Theorem 6.6]{BZ} proves Theorem \ref{BZ-thm} for $q_0=2$, but their arguments also work well for any value $q_0\in (1,\vc)$.

\section{Some kernel estimates and $L^p$-boundedness of integral operators}

\subsection{Some kernel estimates}
This section is devoted to establishing some kernel estimates related to the heat kernel of $\mathcal L_\nu$. These estimates play an essential role in proving our main results. We begin by providing an explicit formula for the heat kernel of $\mathcal L_\nu$.

Let $\nu \in (-1,\vc)^n$. For each $j=1,\ldots, n$, similarly to $\mathcal L_\nu$, denote by $\mathcal L_{\nu_j}$ the self-adjoint extension of the differential operator 
\[
L_{\nu_j} :=  -\frac{\partial^2}{\partial x_j^2} + x_j^2 + \frac{1}{x_j^2}(\nu_j^2 - \frac{1}{4})
\] 
on $C_c^\infty(\mathbb{R}_+)$ as the natural domain. It is easy to see that 
\[
\mathcal L_\nu =\sum_{j=1}^n \mathcal L_{\nu_j}.
\]

Let $p_t^\nu(x,y)$ be the kernel of $e^{-t\mathcal L_\nu}$ and let $p_t^{\nu_j}(x_j,y_j)$ be the kernel of $e^{-t\mathcal L_{\nu_j}}$ for each $j=1,\ldots, n$. Then we have
\begin{equation}\label{eq- prod ptnu}
	p_t^\nu(x,y)=\prod_{j=1}^n p_t^{\nu_j}(x_j,y_j).
\end{equation}
For $\nu_j\ge -1/2$, $j=1,\ldots, n,$, the kernel of $e^{-t\mathcal L_{\nu_j}}$ is given by
\begin{equation}
	\label{eq1-ptxy}
	p_t^{\nu_j}(x_j,y_j)=\f{2(rx_jy_j)^{1/2}}{1-r}\exp\Big(-\f{1}{2}\f{1+r}{1-r}(x_j^2+y_j^2)\Big)I_{\nu_j}\Big(\f{2r^{1/2}}{1-r}x_jy_j\Big),
\end{equation}
where $r=e^{-4t}$ and $I_\alpha$ is the usual Bessel funtions of an imaginary argument defined by
\[
I_\alpha(z)=\sum_{k=0}^\vc \f{\Big(\f{z}{2}\Big)^{\alpha+2k}}{k! \Gamma(\alpha+k+1)}, \ \ \ \ \alpha >-1.
\]
See for example \cite{Dziu, NS}.

Note that for each $j=1,\ldots, n$, we can rewrite the kernel $p_t^{\nu_j}(x_j,y_j)$ as follows
\begin{equation}
	\label{eq2-ptxy}
	\begin{aligned}
		p_t^{\nu_j}(x_j,y_j)=\f{2(rx_jy_j)^{1/2}}{1-r}&\exp\Big(-\f{1}{2}\f{1+r}{1-r}|x_j-y_j|^2\Big)\exp\Big(-\f{1-r^{1/2}}{1+r^{1/2}}x_jy_j\Big)\\
		&\times \exp\Big(-\f{2r^{1/2}}{1-r}x_jy_j\Big)I_{\nu_j}\Big(\f{2r^{1/2}}{1-r}x_jy_j\Big),
	\end{aligned}
\end{equation}
where $r=e^{-4t}$.

The following  properties of the Bessel function $I_\alpha$  with $\alpha>-1$ are well-known and are taken from \cite{L}:
\begin{equation}
	\label{eq1-Inu}
	I_\alpha(z)\sim z^\alpha, \ \ \ 0<z\le 1,
\end{equation}
\begin{equation}
	\label{eq2-Inu}
	I_\alpha(z)= \f{e^z}{\sqrt{2\pi z}}+S_\alpha(z),
\end{equation}
where
\begin{equation}
	\label{eq3-Inu}
	|S_\alpha(z)|\le  Ce^zz^{-3/2}, \ \ z\ge 1,
\end{equation}
\begin{equation}
	\label{eq4-Inu}
	\f{d}{dz}(z^{-\alpha}I_\alpha(z))=z^{-\alpha}I_{\alpha+1}(z)
\end{equation}
and
\begin{equation}
	\label{eq6s-Inu}
	0< I_\alpha(z)-I_{\alpha+2}(z)=\f{2(\alpha+1)}{z}I_{\alpha+1}(z), \ \ \ z>0.
\end{equation}

Let $\nu > -1$, we have
\[
I_\alpha(z)-I_{\alpha+1}(z)=[I_\alpha(z)-I_{\alpha+2}(z)] -[I_{\alpha+1}(z)-I_{\alpha+2}(z)].
\]
Applying \eqref{eq6s-Inu}, 
\[
I_\alpha(z)-I_{\alpha+2}(z) =\f{2(\alpha+1)}{z}I_{\alpha+1}(z).
\]
Hence,
\[
|I_\alpha(z)-I_{\alpha+1}(z)|\le \f{2(\alpha+1)}{z}I_{\alpha+1}(z)+|I_{\alpha+1}(z)-I_{\alpha+2}(z)|.
\]
On the other hand, since $\alpha+1>-1/2$, we have
\[
0< I_{\alpha+1}(z)-I_{\alpha+2}(z)<2(\alpha+2)\f{I_{\alpha+2}(z)}{z}<2(\alpha+2)\f{I_{\alpha+1}(z)}{z}, \ \ \ z>0.
\]
See for example \cite{Na}.

Consequently,
\begin{equation}
	\label{eq5s-Inu}
	|I_\alpha(z)-I_{\alpha+1}(z)|<(4\alpha +6)\f{I_{\alpha+1}(z)}{z}, \ \ \ \alpha>-1, z>0.
\end{equation}

Particularly, in the case $n=1$, from \eqref{eq6s-Inu}, \eqref{eq5s-Inu} and \eqref{eq1-ptxy}, for $\alpha>-1$ we immediately imply the following, for $t>0$ and $x,y \in (0,\vc)$, 
\begin{equation}
	\label{eq6-Inu}
	p_t^\alpha(x,y)-p_t^{\alpha+2}(x,y)= 2(\alpha+1)\f{1-r}{2r^{1/2}xy}p_t^{\alpha+1}(x,y) 
\end{equation}
and
\begin{equation}
	\label{eq5-Inu}
	|p_t^\alpha(x,y)-p_t^{\alpha+1}(x,y)|\lesi \f{1-r}{2r^{1/2}xy}p_t^{\alpha+1}(x,y), 
\end{equation}
where $r=e^{-4t}$.

Due to \eqref{eq- prod ptnu}, once we get estimates for the case $n=1$, the case $n\ge 2$ follows immediately. 	

\medskip

\noindent \underline{\textbf{The case $n=1$.}}

\medskip

In this case, we have
\begin{equation*} 
	\gamma_\nu = \begin{cases}
		 -1/2-\nu, & \ -1<\nu<-1/2, \\
		0, \ \ &\ \nu\ge -1/2.
	\end{cases}
\end{equation*}

We first recall some known estimates in \cite{B1, B2, BD}.
		
		
\begin{prop}
	\label{prop- delta k pt nu -1/2} Let $\nu\ge -1/2$. Then for any $k\in \mathbb N$ we have
		\begin{equation}
		\label{eq-  pt}
		|p_t^\nu(x,y)|\lesi   \f{e^{-t/8}}{t^{(k+1)/2}}\exp\Big(-\f{|x-y|^2}{ct}\Big) \Big(1+ \f{\sqrt t}{x}\Big)^{-(\nu+1/2)}
	\end{equation}
	and
	\begin{equation}
		\label{eq- delta k pt}
		| \delta^k_\nu   p_t^\nu(x,y)|\lesi   \f{1}{t^{(k+1)/2}}\exp\Big(-\f{|x-y|^2}{ct}\Big) \Big(1+ \f{\sqrt t}{x}\Big)^{-(\nu+1/2)}
	\end{equation}
	for $t>0$ and $x,y\in \mathbb R_+$.
\end{prop}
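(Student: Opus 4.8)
The plan is to work entirely in the one-dimensional setting of the Proposition (the case $n\ge2$ follows from \eqref{eq- prod ptnu}) and to read everything off the explicit formula \eqref{eq2-ptxy}, writing $r=e^{-4t}$ and $z=\f{2r^{1/2}}{1-r}xy$. Two elementary facts drive the whole argument: $\f12\f{1+r}{1-r}=\f12\coth(2t)\ge\f1{4t}$ for all $t>0$, which produces the Gaussian, and the factorization $1-r=(1-r^{1/2})(1+r^{1/2})$, i.e. $\f{1-r^{1/2}}{1-r}=\f1{1+r^{1/2}}$, which is the source of every cancellation below.

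The first step is a sharpened form of \eqref{eq- pt}: I would show that for all $\mu\ge-1/2$ one has $|p_t^\mu(x,y)|\lesi \Phi_t^\mu(x,y)$, where
\[
\Phi_t^\mu(x,y):=\f1{\sqrt t}\exp\Big(-\f{|x-y|^2}{ct}\Big)\exp\Big(-c\,\f{1-r^{1/2}}{1+r^{1/2}}xy\Big)\Big(1+\f{\sqrt t}{x}\Big)^{-(\mu+1/2)}.
\]
Inserting \eqref{eq2-ptxy}, the first exponential of \eqref{eq2-ptxy} gives the Gaussian and the factor $\exp(-\f{1-r^{1/2}}{1+r^{1/2}}xy)$ gives the diagonal decay, so it remains to estimate the prefactor $\f{2(rxy)^{1/2}}{1-r}$ times $e^{-z}I_\mu(z)$ by a routine case analysis: when $z\le1$ one uses $e^{-z}I_\mu(z)\sim z^\mu$ from \eqref{eq1-Inu}, which together with the prefactor yields both the $t^{-1/2}$ and the anisotropic factor $(1+\sqrt t/x)^{-(\mu+1/2)}$; when $z>1$ one uses $e^{-z}I_\mu(z)\lesi z^{-1/2}$ from \eqref{eq2-Inu}--\eqref{eq3-Inu}, the prefactor then collapsing to $t^{-1/2}$, and in the remaining corner $x\le\sqrt t\le y$ one trades a fraction of the Gaussian for the anisotropic factor via $e^{-c\tau^2}\lesi\tau^{-(\mu+1/2)}$ for $\tau\ge1$. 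Finally, for $t\ge1$ the smallness of $r$ forces $z\le1$ and makes the prefactor $O(e^{-2t(\mu+1)})$, supplying the factor $e^{-t/8}$; dropping the harmless diagonal exponential and weakening $t^{-1/2}$ to $t^{-(k+1)/2}$ then gives \eqref{eq- pt}.

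The derivatives are handled through a pointwise recurrence obtained by a direct computation with $\delta_\nu=\partial_x+x-\f{\nu+1/2}{x}$, the identity \eqref{eq4-Inu} written as $I_\mu'(z)=I_{\mu+1}(z)+\f\mu z I_\mu(z)$, and $\f{(2r^{1/2}/(1-r))y}{z}=\f1x$: after the $x^{-1/2}I_\mu$ terms cancel one obtains, for every $\mu$,
\[
\delta_\nu p_t^\mu=\f{\mu-\nu}{x}\,p_t^\mu-\f{2r}{1-r}x\,p_t^\mu+\f{2r^{1/2}}{1-r}y\,p_t^{\mu+1}.
\]
At the base order $\mu=\nu$ the first term drops out, and combining this with the commutator relation $[\delta_\nu^k,x]=k\,\delta_\nu^{k-1}$ (itself a consequence of $[\delta_\nu,x]=1$) yields the clean iteration
\[
\delta_\nu^{k+1}p_t^\nu=\f{2r^{1/2}}{1-r}y\,\delta_\nu^k p_t^{\nu+1}-\f{2r}{1-r}x\,\delta_\nu^k p_t^\nu-\f{2kr}{1-r}\,\delta_\nu^{k-1}p_t^\nu.
\]
I would then prove \eqref{eq- delta k pt} by induction on $k$, propagating simultaneously (for all orders $\mu\ge\nu$) the bound $|\delta_\nu^k p_t^\mu|\lesi t^{-k/2}\Phi_t^\mu$ and a companion difference estimate $|\delta_\nu^k(p_t^{\mu+1}-p_t^\mu)|\lesi \f{1-r}{r^{1/2}xy}\,t^{-k/2}\Phi_t^{\mu+1}$, the latter being the differentiated form of \eqref{eq6-Inu}. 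In the inductive step the last term already has the correct size $t^{-(k+1)/2}\Phi_t^\nu$ because $\f{r}{1-r}\sim t^{-1}$; for the first two terms I add and subtract $\f{2r^{1/2}}{1-r}y\,\delta_\nu^k p_t^\nu$ and use $\f{1-r^{1/2}}{1-r}=\f1{1+r^{1/2}}$ to rewrite them as
\[
\f{2r^{1/2}}{1-r}y\,\delta_\nu^k(p_t^{\nu+1}-p_t^\nu)+\f{2r^{1/2}}{1-r}(y-x)\,\delta_\nu^k p_t^\nu+\f{2r^{1/2}}{1+r^{1/2}}x\,\delta_\nu^k p_t^\nu .
\]
These three pieces gain one factor $t^{-1/2}$ and keep order $\ge\nu$: the first by the companion estimate together with $\f1x(1+\sqrt t/x)^{-(\alpha+1)}\lesi\f1{\sqrt t}(1+\sqrt t/x)^{-\alpha}$ ($\alpha\ge0$); the second by $|x-y|e^{-|x-y|^2/ct}\lesi\sqrt t\,e^{-|x-y|^2/2ct}$ and $\f{r^{1/2}}{1-r}\sqrt t\sim t^{-1/2}$; and the third by the diagonal Gaussian of $\Phi_t^\nu$ through $x\,e^{-c\f{1-r^{1/2}}{1+r^{1/2}}xy}\lesi t^{-1/2}$ (splitting $y\ge x/2$ from $y<x/2$). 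Bounding the resulting order-$\mu$ factors by order-$\nu$ ones via $(1+\sqrt t/x)^{-(\mu+1/2)}\le(1+\sqrt t/x)^{-(\nu+1/2)}$ closes the induction.

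The hard part will be precisely the two multiplication terms $-\f{2r}{1-r}x\,p_t^\mu$ and $\f{2r^{1/2}}{1-r}y\,p_t^{\mu+1}$: each is individually of size $t^{-3/2}$ near the diagonal $x=y\sim t^{-1/2}$, so one cannot run the induction with the weak bound \eqref{eq- pt} alone. The resolution is the two ingredients isolated above, namely the identity $\f{1-r^{1/2}}{1-r}=\f1{1+r^{1/2}}$, which replaces the dangerous coefficient $\f{r}{1-r}\sim t^{-1}$ by the bounded coefficient $\f{2r^{1/2}}{1+r^{1/2}}$, and the diagonal Gaussian of $\Phi_t^\mu$, which must therefore be retained throughout the induction in order to tame the surviving factor $x$; the order-raising is absorbed only through the difference estimate coming from \eqref{eq6-Inu}. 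The genuine bookkeeping burden is carrying the companion difference estimate at all orders (its own induction produces $\f1x p_t^{\mu+1}$-type terms and further differences via the subtracted recurrence and the commutator $[\delta_\nu,\tfrac1x]=-\tfrac1{x^2}$), and the regime $t\ge1$, where the $\sim$ above become genuine inequalities, is disposed of separately using the exponential smallness of $r$, exactly as for \eqref{eq- pt}.
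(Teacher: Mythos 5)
The paper offers no internal proof to compare against: it imports the first estimate from \cite[Proposition 3.2 and Lemma 3.3]{B1} and the second from \cite[Proposition 3.9]{BD}, so your argument must stand entirely on its own. Your Step 1 is sound: the strengthened bound $|p_t^\mu(x,y)|\lesi\Phi_t^\mu(x,y)$, diagonal factor included, does follow from \eqref{eq2-ptxy} and \eqref{eq1-Inu}--\eqref{eq3-Inu} by the case analysis you describe, and your recurrence for $\delta_\nu^{k+1}p_t^\nu$, obtained from \eqref{eq4-Inu} and $[\delta_\nu,x]=1$, is correct.

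The gap is the propagation of the companion difference estimate, on which the first of your three pieces entirely depends. Subtracting the recurrences at orders $\mu$ and $\mu+1$ gives
\[
\delta_\nu(p_t^{\mu+1}-p_t^\mu)=\f{\mu-\nu}{x}(p_t^{\mu+1}-p_t^\mu)+\f{1}{x}p_t^{\mu+1}-\f{2r}{1-r}x(p_t^{\mu+1}-p_t^\mu)+\f{2r^{1/2}}{1-r}y(p_t^{\mu+2}-p_t^{\mu+1}).
\]
For $t\le1$ your claimed bound for the left-hand side has size $\f{\sqrt t}{xy}\Phi_t^{\mu+1}$, but the term $\f{1}{x}p_t^{\mu+1}$ alone has size $\f{1}{x}\Phi_t^{\mu+1}$, which is too large by the unbounded factor $y/\sqrt t$; similarly, bounding the third and fourth terms by the $k=0$ companion estimate overshoots by factors $x/\sqrt t$ and $y/\sqrt t$. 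So the companion bound cannot be propagated term by term; it survives only through a cancellation across these terms. Indeed, eliminating $p_t^{\mu+2}$ via \eqref{eq6-Inu} rewrites the last three terms as
\[
-\f{2\mu+1}{x}p_t^{\mu+1}-\f{2r^{1/2}}{1-r}\big(r^{1/2}x+y\big)(p_t^{\mu+1}-p_t^\mu),
\]
and these cancel to leading order precisely because $I_\mu(z)-I_{\mu+1}(z)=\f{2\mu+1}{2z}I_{\mu+1}(z)+O(z^{-2}I_{\mu+1}(z))$ as $z\to\vc$. This signed, second-order asymptotics of the Bessel difference is not a consequence of \eqref{eq1-Inu}--\eqref{eq5s-Inu} (which are first-order and, for consecutive orders, one-sided), you never state or prove it, and iterating your scheme would additionally require controlling $\delta_\nu^k$ of its error term --- a whole hierarchy of refined estimates hidden under the word ``bookkeeping''. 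As written, the induction does not close.

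The difficulty can be bypassed altogether, and this is in fact how the paper handles every derivative bound it does prove (Proposition \ref{prop-1}, Theorem \ref{thm-delta k pnu}): by \eqref{eq- delta and eigenvector} one has the intertwining $\delta_\nu e^{-t\mathcal L_\nu}=e^{-t(\mathcal L_{\nu+1}+2)}\delta_\nu$, hence
\[
\delta_\nu^{k+1}e^{-t\mathcal L_\nu}=\delta_\nu^{k}e^{-\f{t}{2}(\mathcal L_{\nu+1}+2)}\circ\delta_\nu e^{-\f{t}{2}\mathcal L_\nu},
\]
and \eqref{eq- del nu and del nu + 1} converts $\delta_\nu^k$ acting on $p_{t/2}^{\nu+1}$ into $\delta_{\nu+1}^k+\f{k}{x}\delta_{\nu+1}^{k-1}$. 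Since $\nu\ge-1/2$ guarantees $\nu+\ell\ge-1/2$ for every shift, an induction on $k$ run simultaneously over all such orders needs only your $k=1$ estimate, the absorption $\f{1}{x}\big(1+\f{\sqrt t}{x}\big)^{-(\nu+3/2)}\le\f{1}{\sqrt t}\big(1+\f{\sqrt t}{x}\big)^{-(\nu+1/2)}$, and the composition bound of Lemma \ref{lem-elementary lemma}; no differentiated difference estimate ever arises. I would redirect your effort to that route.
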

\begin{proof}
	The estimate \eqref{eq-  pt} can be found in \cite[Proposition 3.2]{B1} for $\nu=-1/2$ and in \cite[Lemma 3.3]{B1} for $\nu>-1/2$; while the estimate \eqref{eq- delta k pt} is taken from \cite[Proposition 3.9]{BD}.
\end{proof}
The following estimates are just Lemma 3.6 in \cite{B2}.
\begin{prop}[\cite{B2}]
	\label{prop- delta  pt} Let $\nu>-1$. Then we have
			\begin{equation}
					\label{eq-partial pt}
					\begin{aligned}
							|\partial_t^k p_t^\nu(x,y)|\lesi_{k,\nu} \f{e^{-t/2}}{t^{(k+1)/2}}\exp\Big(-\f{|x-y|^2}{ct}\Big)\Big(1+\f{\sqrt t}{x}\Big)^{\gamma_\nu}\Big(1+\f{\sqrt t}{y}\Big)^{\gamma_\nu}, \ \ \ k\in \mathbb N,
						\end{aligned}
				\end{equation}
	\begin{equation}
		\label{eq- delta  pt}
		| \delta_\nu   p_t^\nu(x,y)|\lesi_\nu   \f{1}{t}\exp\Big(-\f{|x-y|^2}{ct}\Big) \Big(1+ \f{\sqrt t}{y}\Big)^{\gamma_\nu}
	\end{equation}
	and
	\begin{equation}
	\label{eq- delta* pt}
	| \delta_\nu^*p^{\nu+1}_t(x,y)|\lesi_\nu \f{1}{t}\exp\Big(-\f{|x-y|^2}{ct}\Big)\Big(1+\f{\sqrt t}{x} \Big)^{\gamma_\nu}
	\end{equation}
	for all $t>0$ and $x,y\in \mathbb R_+$.
	
\end{prop}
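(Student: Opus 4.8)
The plan is to reduce everything to the one–dimensional kernel via the product formula \eqref{eq- prod ptnu} and then to argue directly from the closed form \eqref{eq2-ptxy}, using only the Bessel asymptotics \eqref{eq1-Inu}--\eqref{eq3-Inu}, the recurrence \eqref{eq4-Inu} and the difference bounds \eqref{eq6-Inu}--\eqref{eq5-Inu}. Throughout I would split the analysis according to the size of the Bessel argument $z=\f{2r^{1/2}}{1-r}xy$ (with $r=e^{-4t}$ and $1-r\sim\min\{t,1\}$): the regime $z\le 1$ is where one or both variables are small and the growth factors are genuinely present, while $z\ge 1$ is the ``diagonal'' regime where $x,y$ stay away from the singularity.

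Consider first the heat kernel bound \eqref{eq-partial pt} with $k=0$. For $\nu\ge -1/2$ this is exactly \eqref{eq- pt} since $\gamma_\nu=0$, so assume $-1<\nu<-1/2$. When $z\le 1$ I substitute $I_\nu(z)\sim z^\nu$ from \eqref{eq1-Inu} into \eqref{eq1-ptxy}; collecting the powers of $x,y,t$ reproduces \emph{exactly} the factor $\f1{\sqrt t}\,\txg\,\tyg$ --- this is where the precise exponent $\gamma_\nu=-1/2-\nu$ enters --- and the Gaussian comes from $\exp\ContainB{-\tfrac12\tfrac{1+r}{1-r}(x^2+y^2)}$ together with $x^2+y^2\ge\tfrac12|x-y|^2$. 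When $z\ge 1$ I use \eqref{eq2-Inu}--\eqref{eq3-Inu}: the factor $e^z$ cancels against $\exp\ContainB{-\tfrac{2r^{1/2}}{1-r}xy}$ in \eqref{eq2-ptxy}, leaving the bare Gaussian bound, which already suffices because $z\ge 1$ keeps the variables away from the singularity and the growth factors are $\ge 1$. For the time derivatives $\partial_t^k$ I would use analyticity of the semigroup $e^{-t\mathcal L_\nu}$: the above estimates extend to complex time in a sector (replacing $r=e^{-4t}$ by $e^{-4\zeta}$), and Cauchy's integral formula over a circle of radius $\sim t$ then converts the $k=0$ bound into \eqref{eq-partial pt}.

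The substance is \eqref{eq- delta pt} and \eqref{eq- delta* pt}. Differentiating \eqref{eq1-ptxy} and using \eqref{eq4-Inu} (equivalently $I_\nu'=I_{\nu+1}+\tfrac{\nu}{z}I_\nu$), the singular term $-\tfrac1x(\nu+\tfrac12)$ cancels and one is left with the exact identities
\[
\delta_\nu p_t^\nu(x,y)=-\f{2r}{1-r}\,x\,p_t^\nu(x,y)+\f{2r^{1/2}}{1-r}\,y\,p_t^{\nu+1}(x,y),
\]
\[
\delta_\nu^* p_t^{\nu+1}(x,y)=\f{2x}{1-r}\,p_t^{\nu+1}(x,y)-\f{2r^{1/2}}{1-r}\,y\,p_t^\nu(x,y).
\]
In the regime $z\le 1$ these can be estimated termwise. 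In the first identity the prefactor $x$ converts the $x$–singularity $p_t^\nu\sim x^{\nu+1/2}$ into the harmless $x^{\nu+3/2}$, so no $x$–growth survives, while the \emph{surviving} $y$–singularity $p_t^\nu\sim y^{\nu+1/2}$ produces precisely the factor $\tyg$ of \eqref{eq- delta pt}; the term with $p_t^{\nu+1}$ is regular since $\nu+1\ge -1/2$, and the factor $\f{\sqrt t}{y}\ContainB{1+\f{\sqrt t}{y}}^{-(\nu+3/2)}\lesi\ContainB{1+\f{\sqrt t}{y}}^{\gamma_\nu}$ keeps it within the claimed bound. The estimate \eqref{eq- delta* pt} is symmetric, the $x$–growth now coming from the $y\,p_t^\nu$ term.

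The genuine obstacle is the diagonal regime $z\ge 1$. There each of the two terms above is of size $\sim t^{-2}$ near $x=y\sim t^{-1/2}$, whereas $\delta_\nu p_t^\nu$ is only $\sim t^{-1}$, so the triangle inequality is hopelessly lossy and the cancellation must be kept. I would therefore regroup, schematically
\[
\delta_\nu p_t^\nu=\f{2r^{1/2}}{1-r}\,y\ContainB{p_t^{\nu+1}-r^{1/2}p_t^\nu}+\f{2r}{1-r}\,(y-x)\,p_t^\nu,
\]
and control the difference $p_t^{\nu+1}-r^{1/2}p_t^\nu$ by the Bessel difference bound \eqref{eq5-Inu}, which supplies the gain $\tfrac{1-r}{r^{1/2}xy}\sim z^{-1}$ (together with the factor $1-r^{1/2}\sim t$) that exactly compensates the dangerous prefactor; the remaining term carries $|x-y|/\sqrt t$, absorbed into the Gaussian. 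The same scheme handles \eqref{eq- delta* pt}. I expect the two delicate points to be (i) this use of the cancellation \eqref{eq5-Inu} on the diagonal, and (ii) the bookkeeping of the growth factors when $-1<\nu<-1/2$: a crude product of one–sided Gaussian bounds overshoots the exponent $\gamma_\nu$, so the small–argument asymptotics of $I_\nu$ must be used directly to obtain the sharp power.
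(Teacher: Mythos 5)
There is no internal proof to compare against: the paper takes this proposition directly from \cite[Lemma 3.6]{B2} (``The following estimates are just Lemma 3.6 in [B2]''), so any argument you give is necessarily a different route. Your route is in the spirit of what the paper does for neighbouring estimates: your first identity is precisely \eqref{eq- chain rule 2}, your second identity for $\delta_\nu^*p_t^{\nu+1}$ is correct (it follows from \eqref{eq4-Inu} in the form $I_{\nu+1}'(z)=I_\nu(z)-\frac{\nu+1}{z}I_{\nu+1}(z)$), and the diagonal cancellation via \eqref{eq5-Inu} is the same mechanism the paper exploits in the proofs of Propositions \ref{prop- delta k pt with 1/x k} and \ref{prop-1} (there via \eqref{eq6s-Inu}). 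For \eqref{eq- delta  pt} and \eqref{eq- delta* pt} your scheme does close, with one step you must make explicit: on the diagonal, the piece $\frac{2r^{1/2}(1-r^{1/2})}{1-r}\,y\,p_t^\nu\sim y\,p_t^\nu$ (small $t$) is \emph{not} absorbed by the Gaussian in $|x-y|$ alone; you need to keep the confinement factor $\exp\bigl(-\frac{1-r^{1/2}}{1+r^{1/2}}xy\bigr)$ from \eqref{eq2-ptxy} and split into $x\ge y/2$ (where $\sqrt t\,y\,e^{-cty^2}\lesssim 1$) and $x<y/2$ (where $|x-y|\gtrsim y$).

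The genuine gap is in \eqref{eq-partial pt}. First, the factor $e^{-t/2}$ is never produced by your computation, and cannot be: in the regime $z\le 1$ with $t$ large, substituting $I_\nu(z)\sim z^\nu$ from \eqref{eq1-Inu} into \eqref{eq1-ptxy} gives the prefactor $r^{(\nu+1)/2}/(1-r)^{\nu+1}\sim e^{-2(\nu+1)t}$, so your claim that collecting powers ``reproduces exactly'' the stated bound is true only for $t\lesssim 1$. Worse, for $-1<\nu<-3/4$ the stated inequality is itself false for large $t$: every term of $p_t^\nu(1,1)=\sum_{k}e^{-(4k+2\nu+2)t}\varphi_k^\nu(1)^2$ is nonnegative and $\varphi_0^\nu(1)\neq 0$, so $p_t^\nu(1,1)\gtrsim_\nu e^{-2(\nu+1)t}$, whereas the right-hand side of \eqref{eq-partial pt} at $x=y=1$, $k=0$ is $O(e^{-t/2}t^{\gamma_\nu-1/2})$ and $2(\nu+1)<1/2$. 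A blind proof must either derive the $\nu$-dependent rate $e^{-2(\nu+1)t}$ and flag the discrepancy with the quoted statement, or restrict the exponential gain to where it holds; asserting it silently is a gap. Second, your passage from $k=0$ to $k\ge 1$ by Cauchy's formula presupposes the $k=0$ bound for complex time, uniformly in a sector, hence bounds for $I_\nu$ with complex argument; none of \eqref{eq1-Inu}--\eqref{eq3-Inu} provides this, and you never establish it. Within the toolbox you actually cite, the workable route is to differentiate \eqref{eq1-ptxy} in $t$ directly for $k=1$ (again via \eqref{eq4-Inu}, with the same diagonal cancellation), and then obtain general $k$ by composing through the semigroup property and Lemma \ref{lem-elementary lemma}, exactly as the paper does for the mixed derivatives in Proposition \ref{prop1- delta partial}.
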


\bigskip

We now take care of the estimate for $\delta p_t^\nu(x,y)$. To do this, from \eqref{eq1-ptxy} we can rewrite $p_t^\nu(x,y)$ as
\begin{equation}\label{eq- form ptxy for derivative}
p_t^\nu(x,y)=\Big(\f{2\sqrt r}{1-r}\Big)^{1/2}\Big(\f{2r^{1/2}}{1-r}xy\Big)^{\nu+1/2}\exp\Big(-\f{1}{2}\f{1+r}{1-r}(x^2+y^2)\Big)\Big(\f{2r^{1/2}}{1-r}xy\Big)^{-\nu}I_\nu\Big(\f{2r^{1/2}}{1-r}xy\Big),
\end{equation}
where $r=e^{-4t}$.

Setting 
$$
H_\nu(r;x,y)=\Big(\f{2r^{1/2}}{1-r}xy\Big)^{-\nu}I_\nu\Big(\f{2r^{1/2}}{1-r}xy\Big),
$$
then
\begin{equation}\label{eq-new formula of heat kernel}
	p_t^\nu(x,y)=\Big(\f{2r^{1/2}}{1-r}\Big)^{1/2}\Big(\f{2r^{1/2}}{1-r}xy\Big)^{\nu+1/2}\exp\Big(-\f{1}{2}\f{1+r}{1-r}(x^2+y^2)\Big)H_\nu(r;x,y). 
\end{equation}
From \eqref{eq-new formula of heat kernel} and \eqref{eq4-Inu}, applying the chain rule,
\begin{equation}\label{eq- chain rule}
	\begin{aligned}
		\partial_x p_t^\nu(x,y)&=  \f{\nu+1/2}{x}p_t^\nu(x,y)- \f{1+r}{1-r}xp_t^\nu(x,y) + \f{2r^{1/2}}{1-r}y p_t^{\nu+1}(x,y),
	\end{aligned}
\end{equation}
which implies
\begin{equation}\label{eq- chain rule 2}
	\begin{aligned}
		\delta_\nu p_t^\nu(x,y)&=  xp_t^\nu(x,y)- \f{1+r}{1-r}xp_t^\nu(x,y) + \f{2r^{1/2}}{1-r}y p_t^{\nu+1}(x,y).
	\end{aligned}
\end{equation}
On the other hand, since $\partial (xf) = f + \partial f$, we have, for $k,\ell \in \mathbb N$ and $\nu>-1$,
\begin{equation}
	\label{eq-formula for delta k xf}
	\delta_\nu^k (xf) = k\delta^{k-1}_\nu f + x\delta_\nu^k f
\end{equation}
and
\begin{equation}\label{eq- del nu and del nu + 1}
	\delta_\nu^k = \delta_{\nu+\ell}^{k} +\f{k\ell}{x}\delta_{\nu+\ell}^{k-1}.
\end{equation}

\begin{prop}
	\label{prop- delta k pt with 1/x k} Let $\nu>-1$. Then for any $k\in \mathbb N\setminus\{0\}$ we have
	\begin{equation}
		\label{eq- delta k pt}
		\f{1}{x^k}| \delta_{\nu+k}   p_t^{\nu+k}(x,y)|\lesi   \f{1}{t^{(k+2)/2}}\exp\Big(-\f{|x-y|^2}{ct}\Big) 
	\end{equation}
	for $t>0$ and $x,y\in \mathbb R_+$.
\end{prop}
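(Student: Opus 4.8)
The plan is to reduce everything to the explicit first-order identity \eqref{eq- chain rule 2}, applied with the spectral parameter $\nu$ replaced by $\nu+k$. Since $\nu+k>0$ whenever $k\ge 1$, this is legitimate, and (using $x-\f{1+r}{1-r}x=-\f{2r}{1-r}x$) it gives
\[
\delta_{\nu+k}p_t^{\nu+k}(x,y)=-\f{2r}{1-r}\,x\,p_t^{\nu+k}(x,y)+\f{2r^{1/2}}{1-r}\,y\,p_t^{\nu+k+1}(x,y),
\]
where $r=e^{-4t}$. Dividing by $x^k$ splits the right-hand side into two pieces, namely $\f{r}{1-r}\,\f{1}{x^{k-1}}\,p_t^{\nu+k}(x,y)$ and $\f{r^{1/2}}{1-r}\,\f{y}{x^k}\,p_t^{\nu+k+1}(x,y)$, so the proposition follows once each piece is shown to be $\lesi t^{-(k+2)/2}\exp(-|x-y|^2/ct)$.

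The engine of the argument is the following elementary consequence of the weighted bound \eqref{eq-  pt} of Proposition \ref{prop- delta k pt nu -1/2}: if $\mu\ge -1/2$ and $0\le m\le \mu+\f12$, then
\[
\f{1}{x^m}|p_t^\mu(x,y)|\lesi \f{1}{t^{(m+1)/2}}\exp\Big(-\f{|x-y|^2}{ct}\Big),\qquad t>0,\ x,y>0.
\]
Indeed, \eqref{eq-  pt} contains the vanishing factor $(1+\sqrt t/x)^{-(\mu+1/2)}=\big(x/(x+\sqrt t)\big)^{\mu+1/2}$, which near the origin behaves like $x^{\mu+1/2}$; since $m\le \mu+\f12$ we bound $x^{\mu+1/2-m}\le (x+\sqrt t)^{\mu+1/2-m}$ and then use $(x+\sqrt t)^{-m}\le t^{-m/2}$. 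This is the one place where the hypothesis $\nu>-1$ enters: taking $\mu=\nu+k$, $m=k-1$ and then $\mu=\nu+k+1$, $m=k$, the requirement $m\le\mu+\f12$ reduces in both cases to $\nu\ge -\f32$, which holds comfortably.

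It remains to absorb the coefficients and the extra factor $y$. A direct inspection of $r=e^{-4t}$ shows $\f{r}{1-r}\lesi \f1t$ and $\f{r^{1/2}}{1-r}\lesi\f1t$ for all $t>0$. For the first piece the displayed estimate with $\mu=\nu+k$, $m=k-1$ gives $\f{1}{x^{k-1}}|p_t^{\nu+k}(x,y)|\lesi t^{-k/2}\exp(-|x-y|^2/ct)$, and multiplying by $\f{r}{1-r}\lesi t^{-1}$ yields the desired $t^{-(k+2)/2}$. For the second piece I split $y\le|x-y|+x$: the contribution of $x$ reduces to $\f{1}{x^{k-1}}p_t^{\nu+k+1}$ and is handled exactly as above, while the contribution of $|x-y|$ is treated by the standard Gaussian absorption $|x-y|\exp(-|x-y|^2/ct)\lesi \sqrt t\,\exp(-|x-y|^2/2ct)$ combined with $\f{1}{x^k}|p_t^{\nu+k+1}(x,y)|\lesi t^{-(k+1)/2}\exp(-|x-y|^2/ct)$; the gained $\sqrt t$ compensates the loss, and after multiplying by $\f{r^{1/2}}{1-r}\lesi t^{-1}$ one again obtains $t^{-(k+2)/2}$. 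Combining the two pieces, after relabelling the constant $c$, completes the proof.

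The main obstacle is precisely the division by $x^k$: on its own $\delta_{\nu+k}p_t^{\nu+k}$ carries no spare powers of $x$, and the estimate would fail without the exact vanishing of the heat kernel at the boundary $x=0$ recorded in the weight $(1+\sqrt t/x)^{-(\mu+1/2)}$. Matching this vanishing against the singular factor $x^{-k}$, and verifying it survives both shifts of the spectral parameter produced by \eqref{eq- chain rule 2}, is the crux; the rest is routine bookkeeping with the coefficients $r/(1-r)$ and the Gaussian.
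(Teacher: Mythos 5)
Your proof is correct and follows essentially the same approach as the paper's: both start from the identity \eqref{eq- chain rule 2} applied at the shifted parameter $\nu+k$, cancel the singular factor $x^{-k}$ against the weight $\left(1+\sqrt{t}/x\right)^{-(\nu+k+1/2)}$ supplied by \eqref{eq-  pt} of Proposition \ref{prop- delta k pt nu -1/2}, split $y\le |x-y|+x$, and absorb $|x-y|$ into the Gaussian. Your merging of the two $x$-terms into $-\frac{2r}{1-r}x$, together with the uniform bounds $\frac{r}{1-r}\lesssim t^{-1}$ and $\frac{r^{1/2}}{1-r}\lesssim t^{-1}$, merely streamlines away the paper's case split $t\in(0,1)$ versus $t\ge 1$ and its appeal to Bessel monotonicity, without changing the substance of the argument.
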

\begin{proof}
	 To do this, we consider two cases.

		\noindent\textbf{Case 1: $t \in (0,1)$.} 
		
		From \eqref{eq- chain rule 2} and the facts that $r\sim 1 + r \sim 1 $ and $1-r\sim t$, we have
		\[
		\begin{aligned}
			|\delta_{\nu+k} p_t^{\nu+k}(x,y)|&\lesi   xp_t^{\nu+k}(x,y)+ \f{x}{t}p_t^{\nu+k}(x,y) + \f{y}{t} p_t^{\nu+k+1}(x,y)\\
			&\lesi   \f{x}{t}p_t^{\nu+k}(x,y) + \f{y}{t} p_t^{\nu+k+1}(x,y)\\
			&\lesi   \f{x}{t}p_t^{\nu+k}(x,y) + \f{|y-x|}{t} p_t^{\nu+k+1}(x,y) +\f{x}{t} p_t^{\nu+k+1}(x,y)\\
			&\lesi   \f{x}{t}p_t^{\nu+k}(x,y) + \f{|y-x|}{t} p_t^{\nu+k+1}(x,y),
		\end{aligned}
		\]
	where in the last inequality we used \eqref{eq6s-Inu} and \eqref{eq1-ptxy}.
	
	Therefore, 
		\[
		\f{1}{x^k}|\delta_{\nu+k} p_t^{\nu+k}(x,y)|\lesi \f{1}{tx^{k-1}}p_t^{\nu+k}(x,y) + \f{|y-x|}{t x^k} p_t^{\nu+k+1}(x,y)=: E_1 +E_2.
		\]

		Using \eqref{eq-  pt},
	\begin{equation*}
		\begin{aligned}
			E_2&\lesi \f{1}{x^k}\f{|x-y|}{t}\f{1}{\sqrt t}\exp\Big(-\f{|x-y|^2}{ct}\Big)\Big(1+\f{\sqrt t}{x}\Big)^{-(\nu+k+3/2)}\\
			&\lesi \f{1}{x^k}\f{|x-y|}{t}\f{1}{\sqrt t}\exp\Big(-\f{|x-y|^2}{ct}\Big)\Big(1+\f{\sqrt t}{x}\Big)^{-k}\\
			&\lesi \f{1}{t^{k/2}}\f{|x-y|}{t}\f{1}{\sqrt t}\exp\Big(-\f{|x-y|^2}{ct}\Big)\\
			&\lesi \f{1}{t^{(k+1)/2}} \f{1}{\sqrt t}\exp\Big(-\f{|x-y|^2}{2ct}\Big).
		\end{aligned}
	\end{equation*}
	Similarly,
	\[
	E_1\lesi \f{1}{t^{(k+1)/2}} \f{1}{\sqrt t}\exp\Big(-\f{|x-y|^2}{ct}\Big).
	\]
	
\bigskip	
	
	\noindent\textbf{Case 2: $t\ge 1$.}	In this case, $1+r\sim 1-r\sim 1$ and $r\le 1$. Hence,
	\[
	\begin{aligned}
		|\delta_{\nu+k} p_t^{\nu+k}(x,y)|&\lesi xp_t^{\nu+k}(x,y) +  y p_t^{\nu+k+1}(x,y)\\
		&\lesi xp_t^{\nu+k}(x,y) +  |x-y| p_t^{\nu+k+1}(x,y) +x p_t^{\nu+k+1}(x,y)\\
		&\lesi xp_t^{\nu+k}(x,y) +  |x-y| p_t^{\nu+k+1}(x,y),
	\end{aligned}
	\]
		where in the last inequality we used \eqref{eq6s-Inu} and \eqref{eq1-ptxy}.

It follows that 
	\[
	\f{1}{x^k}|\delta_{\nu+k} p_t^{\nu+k}(x,y)|\lesi \f{1}{x^{k-1}}p_t^{\nu+k}(x,y) +   \f{|x-y|}{x^k} p_t^{\nu+k+1}(x,y).
	\]
	By \eqref{eq-  pt},
	\begin{equation*}
		\begin{aligned}
			\f{|x-y|}{x^k} p_t^{\nu+k+1}(x,y)&\lesi  \f{|x-y|}{x^k}\f{e^{-t/4}}{\sqrt t}\exp\Big(-\f{|x-y|^2}{ct}\Big)\Big(1+\f{\sqrt t}{x}\Big)^{-(\nu+k+3/2)}\\
			&\lesi  \f{|x-y|}{x^k}\f{e^{-t/4}}{\sqrt t}\exp\Big(-\f{|x-y|^2}{ct}\Big)\Big(1+\f{\sqrt t}{x}\Big)^{-k}\\
			&\lesi  \f{|x-y|}{t^{k/2}}\f{e^{-t/4}}{\sqrt t}\exp\Big(-\f{|x-y|^2}{ct}\Big)\\
			&\lesi  \f{1}{t^{(k-1)/2}}\f{e^{-t/4}}{\sqrt t}\exp\Big(-\f{|x-y|^2}{2ct}\Big)\\	
			&\lesi \f{1}{t^{(k+1)/2}} \f{1}{\sqrt t}\exp\Big(-\f{|x-y|^2}{2ct}\Big).
		\end{aligned}
	\end{equation*}
	Similarly,
	\[
	\f{1}{x^{k-1}}p_t^{\nu+k}(x,y)\lesi \f{1}{t^{(k+1)/2}} \f{1}{\sqrt t}\exp\Big(-\f{|x-y|^2}{ct}\Big).
	\]

	\bigskip

	This completes our proof.
\end{proof}

\begin{prop}\label{prop-1}
	Let $ \nu>-1$. Then for $k\in \mathbb N \backslash\{0\}$ we have
	\[
	\f{1}{x}|\delta_{\nu+1}^k p_t^{\nu+1}(x,y)|
	\lesi  \f{1}{t^{(k+2)/2}}  \exp\Big(-\f{|x-y|^2}{ct}\Big) \ \ \ \text{if $k$ is odd}
	\]
	and
	\[
	\f{1}{x}|\delta_{\nu+1}^k p_t^{\nu+1}(x,y)|
	\lesi  \f{1}{t^{(k+2)/2}} \exp\Big(-\f{|x-y|^2}{ct}\Big)\Big(1+\f{\sqrt t}{x}\Big)^{-\nu-1/2} \ \ \ \text{if $k$ is even}.
	\]
\end{prop}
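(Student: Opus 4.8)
The even case needs no parity and follows directly from Proposition~\ref{prop- delta k pt nu -1/2} applied at the index $\nu+1\ge -1/2$ (valid since $\nu>-1$). That proposition gives $|\delta_{\nu+1}^k p_t^{\nu+1}(x,y)|\lesssim t^{-(k+1)/2}\exp(-|x-y|^2/ct)\,(1+\sqrt t/x)^{-(\nu+3/2)}$, and I would combine it with the elementary identity $\f1x(1+\sqrt t/x)^{-1}=\f{1}{x+\sqrt t}\le t^{-1/2}$, which peels off exactly one power of the weight:
\[
\f1x\Big(1+\f{\sqrt t}{x}\Big)^{-(\nu+3/2)}=\f{1}{x+\sqrt t}\Big(1+\f{\sqrt t}{x}\Big)^{-(\nu+1/2)}\le \f{1}{\sqrt t}\Big(1+\f{\sqrt t}{x}\Big)^{-(\nu+1/2)}.
\]
Multiplying the two estimates yields the claimed even-case bound for every $k\ge1$ and every $\nu>-1$; note the peeling removes one factor $(1+\sqrt t/x)^{-1}$ and leaves the remaining exponent untouched, so it is valid for all $\nu$.

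For the odd case my plan is to prove, by induction on the odd integer $j$, the \emph{stronger} statement that carries an arbitrary number of weights: for all $m\ge1$ and all $\nu>-1$,
\[
\Psi(j):\qquad \f{1}{x^m}\big|\delta_{\nu+m}^{j}p_t^{\nu+m}(x,y)\big|\lesssim \f{1}{t^{(j+m+1)/2}}\exp\Big(-\f{|x-y|^2}{ct}\Big).
\]
Proposition~\ref{prop-1} (odd case) is then $\Psi(k)$ with $m=1$. The base case $\Psi(1)$, for \emph{every} $m$, is exactly Proposition~\ref{prop- delta k pt with 1/x k}. I will also repeatedly use the generic weighted bound coming from Proposition~\ref{prop- delta k pt nu -1/2} and the peeling identity above: for any $j'\ge0$, $m'\ge0$ and index $\ge -1/2$,
\[
\f{1}{x^{m'}}\big|\delta_{\nu'+m'}^{j'}p_t^{\nu'+m'}\big|\lesssim t^{-(j'+m'+1)/2}\exp\Big(-\f{|x-y|^2}{ct}\Big)\Big(1+\f{\sqrt t}{x}\Big)^{-(\nu'+1/2)},
\]
which I invoke for the even-order factors, where the full weightless gain is not required.

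For the inductive step (odd $j\ge3$) I would set $\mu=\nu+m$, $a=-\f{2r}{1-r}$, $b=\f{2r^{1/2}}{1-r}$, so that \eqref{eq- chain rule 2} reads $\delta_\mu p_t^\mu=ax\,p_t^\mu+by\,p_t^{\mu+1}$. Applying $\delta_\mu^{j-1}$, then the Leibniz identity \eqref{eq-formula for delta k xf} to $x\,p_t^\mu$ and the index-shift identity \eqref{eq- del nu and del nu + 1} (with $\ell=1$) to $\delta_\mu^{j-1}p_t^{\mu+1}$, and multiplying by $x^{-m}$, I obtain
\[
\f{1}{x^m}\delta_\mu^{j}p_t^\mu=a(j-1)\f{1}{x^m}\delta_\mu^{j-2}p_t^\mu+a\f{1}{x^{m-1}}\delta_\mu^{j-1}p_t^\mu+by\f{1}{x^m}\delta_{\mu+1}^{j-1}p_t^{\mu+1}+b(j-1)y\f{1}{x^{m+1}}\delta_{\mu+1}^{j-2}p_t^{\mu+1}.
\]
On $(0,1)$ one has $a,b\lesssim t^{-1}$ (on $[1,\infty)$ both carry exponential decay and the estimate is easier), and I use $|x-y|\exp(-|x-y|^2/ct)\lesssim\sqrt t\,\exp(-|x-y|^2/2ct)$ with the splitting $y=(y-x)+x$. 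The first term is handled by $\Psi(j-2)$ at power $m$; the second and the third (after splitting $y$) by the generic weighted bound, the spare factor $a$ or $b$, and at most $m$ applications of the peeling identity.

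The decisive term is the last one, $b(j-1)y\,x^{-(m+1)}\delta_{\mu+1}^{j-2}p_t^{\mu+1}$, which carries \emph{two} excess powers of $1/x$ relative to the target. Here the generic weighted bound is too weak in the range $-1<\nu<-1/2$, and this is exactly why the induction hypothesis must be carried for all $m$: after the split $y=(y-x)+x$, the $x$-piece is $\Psi(j-2)$ at power $m$ (base $\nu+1$), while the $(y-x)$-piece is estimated by $\Psi(j-2)$ at power $m+1$ (base $\nu$, index $\nu+m+1=\mu+1$), giving $x^{-(m+1)}|\delta_{\nu+m+1}^{j-2}p_t^{\nu+m+1}|\lesssim t^{-(j+m)/2}\exp(-|x-y|^2/ct)$; combined with $b\lesssim t^{-1}$ and $|y-x|\exp\lesssim\sqrt t\,\exp$ this yields precisely $t^{-(j+m+1)/2}\exp(-|x-y|^2/ct)$, and summing the four contributions closes the induction. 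The main obstacle is therefore conceptual rather than computational: the natural single-weight statement ($m=1$) does not close under the recursion, so one must strengthen it to all $m\ge1$ and exploit that the base case (Proposition~\ref{prop- delta k pt with 1/x k}) already holds for every $m$.
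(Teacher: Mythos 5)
Your proposal is correct, and it takes a genuinely different route from the paper's proof. The paper argues at the operator level: it splits the semigroup at dyadic times and uses the intertwining $\delta_{\nu+1}e^{-s\mathcal L_{\nu+1}}=e^{-s(\mathcal L_{\nu+2}+2)}\delta_{\nu+1}$, so that at each stage only one new derivative lands on a heat kernel while the Gaussian-bounded factor $\delta_{\nu+1}p^{\nu+1}_{t/2}$ is removed by composing kernel bounds (a step that implicitly needs a Gaussian-convolution estimate in the spirit of Lemma \ref{lem-elementary lemma}); after the index shift \eqref{eq- del nu and del nu + 1}, the troublesome term $\frac{1}{x^2}\delta_{\nu+2}^{k-2}p^{\nu+2}_{t/2}$ is either finished by weight-peeling (even $k$, which is exactly where the surviving factor $(1+\sqrt t/x)^{-\nu-1/2}$ comes from) or fed back into the same procedure, the iteration accumulating powers $1/x, 1/x^2,\ldots,1/x^{\ell+1}$ until Proposition \ref{prop- delta k pt with 1/x k} applies. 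You never split time or compose kernels: your even case is a one-line consequence of Proposition \ref{prop- delta k pt nu -1/2} at index $\nu+1$ plus a single peeling (genuinely simpler than the paper, where even this case passes through the factorization), and your odd case is a purely pointwise induction on the order built from the recursion \eqref{eq- chain rule 2} together with \eqref{eq-formula for delta k xf} and \eqref{eq- del nu and del nu + 1}, with the hypothesis deliberately strengthened to all powers $1/x^m$ — precisely the strength the base case, Proposition \ref{prop- delta k pt with 1/x k}, already supplies, and the terms you must control (in particular the decisive one $b(j-1)y\,x^{-(m+1)}\delta_{\mu+1}^{j-2}p_t^{\mu+1}$, where the weighted bound fails for $-1<\nu<-1/2$) all close under this stronger hypothesis, as your term-by-term accounting shows. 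Both arguments rest on the same three inputs — Propositions \ref{prop- delta k pt nu -1/2} and \ref{prop- delta k pt with 1/x k} and the peeling inequality $\frac1x\bigl(1+\frac{\sqrt t}{x}\bigr)^{-1}\le t^{-1/2}$ — and both climb the same ladder of powers of $1/x$; what yours buys is the elimination of the semigroup/composition bookkeeping and an explicit structural explanation of why the single-weight ($m=1$) statement cannot close on its own, a point the paper's iteration handles only implicitly, at the price of a four-term algebraic recursion and an $m$-quantified induction hypothesis.
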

\begin{proof}
	We write
	\[
	\delta_{\nu+1}^{k} e^{-t\mathcal L_{\nu+1}}= \delta_{\nu+1}^{k} e^{-\f{t}{2}\mathcal L_{\nu+1}}e^{-\f{t}{2}\mathcal L_{\nu+1}}=\delta_{\nu+1}^{k-1} e^{-\f{t}{2}(\mathcal L_{\nu+2}+2)} \delta_{\nu+1} e^{-\f{t}{2}\mathcal L_{\nu+1}}.
	\]
	Since $\delta_{\nu+1} p^{\nu+1}_{t/2}(x,y)$ satisfies the Gaussian upper bound due to Proposition \ref{prop- delta k pt nu -1/2}, we need only to show that 
	\begin{equation}\label{eq1- alternate estimate}
		\f{1}{x}\delta^{k-1}_{\nu+1} p^{\nu+2}_{t/2}(x,y)\lesi \f{1}{t^{(k+2)/2}}  \exp\Big(-\f{|x-y|^2}{ct}\Big) \ \ \ \text{if $k$ is odd}
	\end{equation}
	and
	\begin{equation}\label{eq2- alternate estimate}
		\f{1}{x}\delta^{k-1}_{\nu+1} p^{\nu+2}_{t/2}(x,y)\lesi  \f{1}{t^{(k+2)/2}} \exp\Big(-\f{|x-y|^2}{ct}\Big)\Big(1+\f{\sqrt t}{x}\Big)^{-\nu-1/2} \ \ \ \text{if $k$ is even}.
	\end{equation}
	
	Indeed, applying \eqref{eq- del nu and del nu + 1}, we have
	\[
	\f{1}{x}\delta^{k-1}_{\nu+1} p^{\nu+2}_{t/2}(x,y)=\f{1}{x}\delta^{k-1}_{\nu+2} p^{\nu+2}_{t/2}(x,y)+\f{k-1}{x^2} \delta^{k-2}_{\nu+2} p^{\nu+2}_{t/2}(x,y)
	\]
	Using the upper bound in Proposition \ref{prop- delta k pt nu -1/2} we have
	\[
	\begin{aligned}
		\f{1}{x}|\delta^{k-1}_{\nu+2} p^{\nu+2}_{t/2}(x,y)|&\lesi \f{1}{x}\f{1}{t^{k/2}}\exp\Big(-\f{|x-y|^2}{ct}\Big)\Big(1+\f{\sqrt t}{x}\Big)^{-(\nu+5/2)}\\
		&\lesi \f{1}{x}\f{1}{t^{k/2}}\exp\Big(-\f{|x-y|^2}{ct}\Big)\Big(1+\f{\sqrt t}{x}\Big)^{-1}\\
		&\lesi  \f{1}{t^{(k+1)/2}}\exp\Big(-\f{|x-y|^2}{ct}\Big).
	\end{aligned}
	\]
	For the second term, we consider two cases.
	
	\textbf{Case 1: $k$ is even}
	
	Using the upper bound of  $\delta^{k-2}_{\nu+2} p^{\nu+2}_{t/2}(x,y)$ in Proposition \ref{prop- delta k pt nu -1/2}, 
	\[
	\begin{aligned}
		\f{1}{x^2} |\delta^{k-2}_{\nu+2} p^{\nu+2}_{t/2}(x,y)|&\lesi \f{1}{x^2}\f{1}{t^{(k-1)/2}}\exp\Big(-\f{|x-y|^2}{ct}\Big)\Big(1+\f{\sqrt t}{x}\Big)^{-(\nu+5/2)}\\
		&\lesi  \f{1}{t^{(k+1)/2}}\exp\Big(-\f{|x-y|^2}{ct}\Big)\Big(1+\f{\sqrt t}{x}\Big)^{-(\nu+1/2)}.
	\end{aligned}
	\]
	This ensures \eqref{eq2- alternate estimate}.
	
	\bigskip
	
	\textbf{Case 2: $k$ is odd}
	
	In this cases, we continue to write
	\[
	\f{1}{x^2} \delta^{k-2}_{\nu+2} e^{-\f{t}{2}\mathcal L_{\nu+2}}=\f{1}{x^2}\delta^{k-3}_{\nu+2} e^{-\f{t}{4}(\mathcal L_{\nu+3}+2)}\circ \delta_{\nu+2} e^{-\f{t}{4}\mathcal L_{\nu+2}}.
	\]
	Since the kernel of $\delta_{\nu+2} e^{-\f{t}{4}\mathcal L_{\nu+2}}$ satisfies a Gaussian upper bound (Proposition \ref{prop- delta k pt nu -1/2}), it suffices to prove that the kernel $\displaystyle \f{1}{x^2}\delta^{k-3}_{\nu+2}p_{t/4}^{\nu+3}(x,y)$ also satisfies a Gaussian upper bound. To do this, using \eqref{eq- del nu and del nu + 1} to obtain
	\[
	\delta^{k-3}_{\nu+2} =\Big(\delta_{\nu+3}+\f{1}{x}\Big)^{k-3}= \delta^{k-3}_{\nu+3}+\f{k-3}{x}\delta^{k-4}_{\nu+3}, 
	\]
	we have
	\[
	\f{1}{x^2}\delta^{k-3}_{\nu+2}p_{t/4}^{\nu+3}(x,y)=\f{1}{x^2}\delta^{k-3}_{\nu+3}p_{t/4}^{\nu+3}(x,y) +\f{k-3}{x^3}\delta^{k-4}_{\nu+3}p_{t/4}^{\nu+3}(x,y).
	\]
	For the first term, using Theorem \ref{prop- delta k pt},
	\[
	\begin{aligned}
		\f{1}{x^2}|\delta^{k-3}_{\nu+3}p_{t/4}^{\nu+3}(x,y)|&\lesi \f{1}{x^2}\f{1}{t^{(k-2)/2}}\exp\Big(-\f{|x-y|^2}{ct}\Big)\Big(1+\f{\sqrt t}{x}\Big)^{-(\nu+7/2)}\\
		&\lesi \f{1}{x^2}\f{1}{t^{(k-2)/2}}\exp\Big(-\f{|x-y|^2}{ct}\Big)\Big(1+\f{\sqrt t}{x}\Big)^{-2}\\
		&\lesi  \f{1}{t^{k/2}}\exp\Big(-\f{|x-y|^2}{ct}\Big).
	\end{aligned}
	\]
	It remains to prove the second term $\displaystyle \f{k-3}{x^3}\delta^{k-4}_{\nu+3}p_{t/4}^{\nu+3}(x,y)$ has a Gaussian upper bound. By iteration and the fact that $k$ is odd, we will reduce to prove that 
	\[
	\f{1}{x^{\ell+1}}|\delta_{\nu+\ell+1}p_{t/2^\ell}^{\nu+\ell+1}(x,y)|\lesi \f{1}{t^{(\ell+3)/2}}\exp\Big(-\f{|x-y|^2}{ct}\Big)
	\]
	where $k=2\ell+1$. This is true due to Proposition \ref{prop- delta k pt with 1/x k}.

	This completes our proof.
\end{proof}

\begin{thm}\label{thm-delta k pnu}
		Let $\nu>-1$ and $k\ge 1$. Then we have
	\[
	|\delta_{\nu}^k p_t^{\nu}(x,y)|
	\lesi  \f{1}{t^{(k+1)/2}}  \exp\Big(-\f{|x-y|^2}{ct}\Big)\Big(1+\f{\sqrt t}{y}\Big)^{\gamma_\nu} \ \ \ \text{if $k$ is odd}
	\]
	and
	\[
	|\delta_{\nu}^k p_t^{\nu}(x,y)|
	\lesi  \f{1}{t^{(k+1)/2}}  \exp\Big(-\f{|x-y|^2}{ct}\Big)\Big(1+\f{\sqrt t}{x}\Big)^{\gamma_\nu}\Big(1+\f{\sqrt t}{y}\Big)^{\gamma_\nu} \ \ \ \text{if $k$ is even}.
	\]
\end{thm}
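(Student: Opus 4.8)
The plan is to write $\delta_\nu^k e^{-t\mathcal L_\nu}$ as a composition of two operators whose kernels are already controlled by the preceding results, and then to convolve. The base case $k=1$ is odd and is exactly the first-order estimate \eqref{eq- delta  pt} of Proposition \ref{prop- delta  pt}, so I assume $k\ge 2$. The tool is the intertwining relation $\delta_\nu e^{-s\mathcal L_\nu}=e^{-2s}e^{-s\mathcal L_{\nu+1}}\delta_\nu$ (equivalently $\delta_\nu\mathcal L_\nu=(\mathcal L_{\nu+1}+2)\delta_\nu$, the same relation used in the proof of Proposition \ref{prop-1}). Splitting $e^{-t\mathcal L_\nu}=e^{-\frac t2\mathcal L_\nu}e^{-\frac t2\mathcal L_\nu}$ and moving one derivative across the first semigroup gives $\delta_\nu^k e^{-t\mathcal L_\nu}=e^{-t}\,\delta_\nu^{k-1}e^{-\frac t2\mathcal L_{\nu+1}}\circ\delta_\nu e^{-\frac t2\mathcal L_\nu}$, which at the level of kernels reads
\[
\delta_\nu^k p_t^\nu(x,y)=e^{-t}\int_0^\infty \big[\delta_\nu^{k-1}p_{t/2}^{\nu+1}(x,z)\big]\,\big[\delta_\nu p_{t/2}^\nu(z,y)\big]\,dz,
\]
where $\delta_\nu^{k-1}$ acts on the first variable. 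This is the master identity, and everything reduces to bounding the two bracketed kernels.

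The second kernel is controlled directly by \eqref{eq- delta  pt}, which gives $|\delta_\nu p_{t/2}^\nu(z,y)|\lesssim t^{-1}\exp(-|z-y|^2/(ct))(1+\sqrt t/y)^{\gamma_\nu}$; crucially, this single good derivative already carries the $y$-weight appearing in both cases of the theorem. For the first kernel I match parameters via the algebraic relation \eqref{eq- del nu and del nu + 1} with $\ell=1$, namely $\delta_\nu^{k-1}=\delta_{\nu+1}^{k-1}+\frac{k-1}{x}\delta_{\nu+1}^{k-2}$, so that the mismatched operator $\delta_\nu$ acting on $p^{\nu+1}$ is replaced by operators $\delta_{\nu+1}$ whose order now matches the kernel:
\[
\delta_\nu^{k-1}p_{t/2}^{\nu+1}(x,z)=\delta_{\nu+1}^{k-1}p_{t/2}^{\nu+1}(x,z)+\frac{k-1}{x}\delta_{\nu+1}^{k-2}p_{t/2}^{\nu+1}(x,z).
\]
Since $\nu+1\ge -1/2$, Proposition \ref{prop- delta k pt nu -1/2} controls the first summand by the pure Gaussian $t^{-k/2}\exp(-|x-z|^2/(ct))$ (the decay factor $(1+\sqrt t/x)^{-(\nu+3/2)}\le 1$ is discarded).

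The second summand is precisely the object estimated in Proposition \ref{prop-1} with index $k-2$, and this is where the parity of $k$ enters. If $k$ is odd then $k-2$ is odd and Proposition \ref{prop-1} gives a pure Gaussian $t^{-k/2}\exp(-|x-z|^2/(ct))$; if $k$ is even then $k-2$ is even and the bound carries the extra factor $(1+\sqrt t/x)^{-\nu-1/2}\le(1+\sqrt t/x)^{\gamma_\nu}$. The one boundary case $k=2$ (where $k-2=0$) is not covered by Proposition \ref{prop-1} and is checked by hand: using the heat-kernel bound \eqref{eq-  pt} of Proposition \ref{prop- delta k pt nu -1/2} with $k=0$ together with $1/x\le t^{-1/2}(1+\sqrt t/x)$ one gets $\frac1x p_{t/2}^{\nu+1}(x,z)\lesssim t^{-1}\exp(-|x-z|^2/(ct))(1+\sqrt t/x)^{\gamma_\nu}$, consistent with the even pattern. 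Thus in all cases $|\delta_\nu^{k-1}p_{t/2}^{\nu+1}(x,z)|\lesssim t^{-k/2}\exp(-|x-z|^2/(ct))$ when $k$ is odd, and the same with an extra $(1+\sqrt t/x)^{\gamma_\nu}$ when $k$ is even.

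Finally I insert both bounds into the master identity. Since the $x$- and $y$-weights depend only on the fixed outer variables, they factor out of the $z$-integral, leaving the convolution $\int_0^\infty \exp(-|x-z|^2/(ct))\exp(-|z-y|^2/(ct))\,dz\lesssim \sqrt t\,\exp(-|x-y|^2/(4ct))$, which is the $a=0$ instance of Lemma \ref{lem-elementary lemma}. Counting powers of $t$ gives $t^{-k/2}\cdot t^{-1}\cdot t^{1/2}=t^{-(k+1)/2}$, and $e^{-t}\le 1$, so the two claimed estimates follow, with the $y$-weight always present and the $x$-weight present precisely when $k$ is even. I expect the main difficulty to be conceptual rather than computational: the entire odd/even dichotomy is inherited from Proposition \ref{prop-1}, so the decisive step is the factorization that peels off a single good derivative $\delta_\nu e^{-\frac t2\mathcal L_\nu}$ (which alone supplies the $y$-weight via \eqref{eq- delta  pt}) while routing the remaining $k-1$ derivatives onto the shifted kernel $p^{\nu+1}$, where the parity-sensitive $1/x$ estimate is available; the boundary index $k=2$ needs the small separate check above.
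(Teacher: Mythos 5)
Your proof is correct and follows essentially the same route as the paper's: the same intertwining factorization $\delta_\nu^{k}e^{-t\mathcal L_\nu}=\delta_\nu^{k-1}e^{-\frac t2(\mathcal L_{\nu+1}+2)}\circ\delta_\nu e^{-\frac t2\mathcal L_\nu}$ peeling off one good derivative that supplies the $y$-weight via \eqref{eq- delta  pt}, the same expansion $\delta_\nu^{k-1}=\delta_{\nu+1}^{k-1}+\frac{k-1}{x}\delta_{\nu+1}^{k-2}$ from \eqref{eq- del nu and del nu + 1}, and the same appeal to Propositions \ref{prop- delta k pt nu -1/2} and \ref{prop-1} for the two summands. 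If anything you are slightly more careful than the paper, which frames the argument as an induction (whose hypothesis it never actually invokes), treats $\nu\ge-1/2$ and $-1<\nu<-1/2$ separately where your argument is uniform, and passes over the boundary case $k=2$ (where Proposition \ref{prop-1} with index $0$ is unavailable) that you verify by hand.
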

\begin{proof}
	
	If $\nu\ge -1/2$, the theorem follows directly from \eqref{eq- delta k pt}. It remains to prove the theorem for $-1<\nu<-1/2$. We will do this by induction.
	
	$\bullet$ The estimate is true for $k=1$ due to \eqref{eq- delta  pt}.
	
	$\bullet$ Assume that the estimate is true  for all $k=1,\ldots, \ell$ for some $\ell\ge 1$. We now prove the estimate for $k=\ell+1$. In this case, we have
	\[
	\delta_{\nu}^{\ell+1} e^{-t\mathcal L_\nu} = \delta_{\nu}^{\ell} e^{-\f{t}{2}(\mathcal L_{\nu+1}+2)} \circ \delta_{\nu}e^{-\f{t}{2}\mathcal L_{\nu}}.   
	\]
	Due to \eqref{eq- delta  pt}, it suffices to prove that 
	\[
	|\delta_\nu^\ell p_{t/2}^{\nu+1}(x,y)|\lesi \f{1}{t^{(\ell+1)/2}}\exp\Big(-\f{|x-y|^2}{ct}\Big) \ \ \text{if $\ell+1$ is odd};
	\]
	and
	\[
	|\delta_\nu^\ell p_{t/2}^{\nu+1}(x,y)|\lesi \f{1}{t^{(\ell+1)/2}}\exp\Big(-\f{|x-y|^2}{ct}\Big) \Big(1+\f{\sqrt t}{x}\Big)^{-\nu-1/2}\ \ \text{if $\ell+1$ is even};
	\]
	To do this, applying \eqref{eq- del nu and del nu + 1} we have
	\[
	\delta_{\nu}^{\ell} e^{-\f{t}{2}\mathcal L_{\nu+1}} = \delta_{\nu+1}^{\ell} e^{-\f{t}{2}\mathcal L_{\nu+1}} +\f{\ell}{x} \delta_{\nu}^{\ell-1} e^{-\f{t}{2}\mathcal L_{\nu+1}}.
	\]
	Then the two estimates above are just direct consequences of  Proposition \ref{prop- delta k pt nu -1/2} and Proposition \ref{prop-1}.
	
	This completes our proof. 
\end{proof}

The following estimate can be viewed as an improved version of Proposition \ref{prop- delta k pt nu -1/2} if the number of derivative is odd.
\begin{prop}\label{prop-2-stronger estimate on delta k }
	Let $\nu>-1/2$. For \textbf{ any odd} $k\in \mathbb N$, we have
	\begin{equation}\label{eq- stronger estimate delta k p}
 |\delta_{\nu}^k p_t^{\nu}(x,y)|
	\lesi  \f{1}{t^{(k+1)/2}} \exp\Big(-\f{|x-y|^2}{ct}\Big) \Big(1+\f{\sqrt t}{x}\Big)^{-(\nu+3/2)}
	\end{equation}
	for all $x,y\in \mathbb R_+$ and $t>0$.
	
	Consequently, for any \textbf{ any odd} $k\in \mathbb N$,
	\[
	\f{1}{x}|\delta_{\nu}^k p_t^{\nu}(x,y)|
	\lesi  \f{1}{t^{(k+2)/2}} \exp\Big(-\f{|x-y|^2}{ct}\Big) \Big(1+\f{\sqrt t}{x}\Big)^{-(\nu+1/2)}
	\]
	for all $x,y\in \mathbb R_+$ and $t>0$.
	
\end{prop}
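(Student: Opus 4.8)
The plan is to prove the main estimate \eqref{eq- stronger estimate delta k p} by induction on the odd integer $k$ in steps of two, and then to read off the displayed consequence from it. The driving identity is the intertwining factorization already used for Proposition \ref{prop-1} and Theorem \ref{thm-delta k pnu}: splitting the semigroup and commuting one derivative through the first factor gives
\[
\delta_\nu^k e^{-t\mathcal L_\nu} = \delta_\nu^{k-1} e^{-\frac t2(\mathcal L_{\nu+1}+2)}\circ \delta_\nu e^{-\frac t2\mathcal L_\nu},
\]
so that, at the level of kernels,
\[
\delta_\nu^k p_t^\nu(x,y) = e^{-t}\int_0^\infty \big(\delta_\nu^{k-1}p^{\nu+1}_{t/2}\big)(x,z)\,\big(\delta_\nu p^\nu_{t/2}\big)(z,y)\,dz.
\]
The inner factor $\delta_\nu p^\nu_{t/2}(z,y)$ obeys the plain Gaussian bound $\lesssim t^{-1}\exp(-|z-y|^2/ct)$ by Proposition \ref{prop- delta k pt nu -1/2} (its weight, being $\le 1$, is simply discarded), so the whole task reduces to showing that the \emph{outer} factor satisfies
\[
\big|\delta_\nu^{k-1}p^{\nu+1}_{t/2}(x,z)\big|\lesssim \frac{1}{t^{k/2}}\exp\Big(-\frac{|x-z|^2}{ct}\Big)\Big(1+\frac{\sqrt t}{x}\Big)^{-(\nu+3/2)}.
\]
Granting this, the $z$-integration is a Gaussian convolution handled by Lemma \ref{lem-elementary lemma} with $a=0$; it produces one factor of $\sqrt t$ which, combined with the $t^{-1}$ carried by the inner factor, turns the outer $t^{-k/2}$ into $t^{-(k+1)/2}$, while the $x$-weight, being independent of $z$, is left untouched. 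This is exactly \eqref{eq- stronger estimate delta k p}.

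For the base case $k=1$ the outer factor is just $p^{\nu+1}_{t/2}(x,z)$, and the weight $(1+\sqrt t/x)^{-(\nu+3/2)}$ is furnished directly by the heat-kernel bound of Proposition \ref{prop- delta k pt nu -1/2} applied with parameter $\nu+1$ (legitimate since $\nu+1>-1/2$). For the inductive step $k\ge 3$ (so $k-1$ is even) I would split the outer operator by means of \eqref{eq- del nu and del nu + 1} with $\ell=1$,
\[
\delta_\nu^{k-1} = \delta_{\nu+1}^{k-1} + \frac{k-1}{x}\,\delta_{\nu+1}^{k-2}.
\]
The first summand $\delta_{\nu+1}^{k-1}p^{\nu+1}_{t/2}$ is estimated by Proposition \ref{prop- delta k pt nu -1/2} and yields exactly the weight $(1+\sqrt t/x)^{-((\nu+1)+1/2)}=(1+\sqrt t/x)^{-(\nu+3/2)}$ with the correct power $t^{-k/2}$.

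The delicate summand is $\frac{1}{x}\delta_{\nu+1}^{k-2}p^{\nu+1}_{t/2}$, in which $k-2$ is again odd. This is precisely where the induction must be allowed to feed on itself: the non-inductive Proposition \ref{prop-1} only delivers the \emph{weightless} bound for an odd number of derivatives, which is too coarse to recover the factor $(1+\sqrt t/x)^{-(\nu+3/2)}$ and would leave the final estimate without its improved weight. Instead I would invoke the \emph{consequence} part of the present proposition at the previous odd index $k-2$, with $\nu$ replaced by $\nu+1$; it reads $\frac1x|\delta_{\nu+1}^{k-2}p^{\nu+1}_{t/2}(x,z)|\lesssim t^{-k/2}\exp(-|x-z|^2/ct)(1+\sqrt t/x)^{-(\nu+3/2)}$ and supplies exactly the missing weight. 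Adding the two summands gives the desired control of the outer factor, and the convolution step completes \eqref{eq- stronger estimate delta k p}. Finally, the consequence at index $k$ follows mechanically from the elementary inequality $\tfrac1x\le t^{-1/2}\big(1+\tfrac{\sqrt t}{x}\big)$, which trades one power of the weight for a factor $t^{-1/2}$ and converts $(1+\sqrt t/x)^{-(\nu+3/2)}$ into $(1+\sqrt t/x)^{-(\nu+1/2)}$. I expect the main obstacle to be exactly this self-referential structure: the argument closes only because the induction is organised over odd $k$ in increments of two, so that the parity — and hence the availability of the stronger weighted consequence at $k-2$ rather than the weaker Proposition \ref{prop-1} — is preserved at every step.
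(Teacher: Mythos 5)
Your proposal is correct and follows essentially the same route as the paper: induction over odd $k$ in steps of two, the factorization $\delta_\nu^k e^{-t\mathcal L_\nu}=\delta_\nu^{k-1}e^{-\frac t2(\mathcal L_{\nu+1}+2)}\circ\delta_\nu e^{-\frac t2\mathcal L_\nu}$, the splitting $\delta_\nu^{k-1}=\delta_{\nu+1}^{k-1}+\frac{k-1}{x}\delta_{\nu+1}^{k-2}$ from \eqref{eq- del nu and del nu + 1}, Proposition \ref{prop- delta k pt nu -1/2} at the shifted parameter $\nu+1$ for the even-order summand, and the inductive hypothesis at $k-2$ (again at parameter $\nu+1$, legitimate since $\nu+1>-1/2$) for the $\tfrac1x$-summand, exactly as in the paper's inductive step. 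The only deviation is the base case $k=1$, which you obtain from the same factorization together with the heat-kernel bound \eqref{eq- pt} at parameter $\nu+1$, whereas the paper proves it by a direct computation from the chain-rule formula \eqref{eq- chain rule 2} with a case analysis in $t$; your shortcut is valid and in fact slightly cleaner.
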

\begin{proof}
If $x\ge \sqrt t$ then 
$$
1+\f{\sqrt t}{x} \sim 1.
$$	
Hence, \eqref{eq- stronger estimate delta k p} is just a consequence of Proposition \ref{prop- delta k pt nu -1/2}. For this reason, we need only to prove the inequality when $x<\sqrt t$. We will do this by induction.

\bigskip

\noindent $\bullet$ We first prove \eqref{eq- stronger estimate delta k p} for $k=1$. We  consider two cases: $t\in (0,1)$ and $t\ge 1$.
	
	\medskip
	
\noindent \textbf{Case 1: $t\in (0,1)$}

\medskip

From \eqref{eq- chain rule 2} and the facts $r\sim 1$ and $1-r\sim t$ for $t\in (0,1)$, 
\[
\begin{aligned}
	|\delta_\nu p_t^\nu(x,y)|&\lesi xp_t^\nu(x,y) +\f{x}{t}p_t^\nu(x,y) +\f{y}{t}p_t^{\nu+1}(x,y)\\
	&\lesi  \f{x}{t}p_t^\nu(x,y)+\f{x}{t}p_t^{\nu+1}(x,y) +\f{|y-x|}{t}p_t^{\nu+1}(x,y).
\end{aligned}
\]
Applying \eqref{eq-  pt}, we further imply, for $x<\sqrt t$,		
\[
\begin{aligned}
	\f{x}{t}p_t^\nu(x,y)+\f{x}{t}p_t^{\nu+1}(x,y) &\lesi \f{x}{t}\f{1}{\sqrt t} \exp\Big(-\f{|x-y|^2}{ct}\Big) \Big(1+\f{\sqrt t}{x}\Big)^{-(\nu+1/2)}\\
	&\lesi  \f{1}{t} \exp\Big(-\f{|x-y|^2}{ct}\Big) \Big(1+\f{\sqrt t}{x}\Big)^{-(\nu+3/2)}
\end{aligned}
\]
and
\[
\begin{aligned}
	\f{|y-x|}{t}p_t^{\nu+1}(x,y) 
	&\lesi  \f{|y-x|}{\sqrt t}\f{1}{t} \exp\Big(-\f{|x-y|^2}{ct}\Big) \Big(1+\f{\sqrt t}{x}\Big)^{-(\nu+3/2)}\\
	&\lesi   \f{1}{t} \exp\Big(-\f{|x-y|^2}{2ct}\Big) \Big(1+\f{\sqrt t}{x}\Big)^{-(\nu+3/2)}.
\end{aligned}
\]			
This ensures \eqref{eq- stronger estimate delta k p} for $k=1$ in this case.

\noindent \textbf{Case 2: $t>1$}

\medskip

From \eqref{eq- chain rule 2} and the facts $r\sim 1$ and $1-r\sim 1+r \sim 1$ for $t>1$, 
\[
\begin{aligned}
	|\delta_\nu p_t^\nu(x,y)|&\lesi xp_t^\nu(x,y) + yp_t^{\nu+1}(x,y)\\
	&\lesi  xp_t^\nu(x,y)+xp_t^{\nu+1}(x,y) +|y-x|p_t^{\nu+1}(x,y).
\end{aligned}
\]
Applying \eqref{eq-  pt}, for $x<\sqrt t$ we further imply		
\[
\begin{aligned}
	xp_t^\nu(x,y)+xp_t^{\nu+1}(x,y) &\lesi x\f{e^{-t/2}}{\sqrt t} \exp\Big(-\f{|x-y|^2}{ct}\Big) \Big(1+\f{\sqrt t}{x}\Big)^{-(\nu+1/2)}\\
	&\lesi  \f{1}{t} \exp\Big(-\f{|x-y|^2}{ct}\Big) \Big(1+\f{\sqrt t}{x}\Big)^{-(\nu+3/2)}
\end{aligned}
\]
and
\[
\begin{aligned}
	\f{|y-x|}{t}p_t^{\nu+1}(x,y) 
	&\lesi   |y-x| \f{e^{-t/2}}{\sqrt t} \exp\Big(-\f{|x-y|^2}{ct}\Big) \Big(1+\f{\sqrt t}{x}\Big)^{-(\nu+3/2)}\\
	&\lesi   \f{1}{t} \exp\Big(-\f{|x-y|^2}{2ct}\Big) \Big(1+\f{\sqrt t}{x}\Big)^{-(\nu+3/2)}.
\end{aligned}
\]			
This ensures \eqref{eq- stronger estimate delta k p} for $k=1$ and $t>1$.

This completes the proof of \eqref{eq- stronger estimate delta k p} for $k=1$.

\bigskip

$\bullet$ Suppose that \eqref{eq- stronger estimate delta k p} is true for all  $k=1, 2, \ldots, 2\ell +1$ for some $\ell\in \mathbb N$. We need to prove the estimate \eqref{eq- stronger estimate delta k p} for $k=2\ell+3$. Indeed, we have
\[
\delta_\nu^{2\ell+3}e^{-t\mathcal L_\nu} =\delta_\nu^{2\ell+2}e^{-\f{t}{2}(\mathcal L_{\nu+1}+2)}\circ \delta_\nu  e^{-\f{t}{2}\mathcal L_\nu}.
\]
Since the kernel of $\delta_\nu  e^{-\f{t}{2}\mathcal L_\nu}$ satisfies \eqref{eq- stronger estimate delta k p}, it suffices to show that  the kernel of $\delta_\nu^{2\ell+2}e^{-\f{t}{2}\mathcal L_{\nu+1}}$ has a Gaussian upper bound. Indeed, using \eqref{eq- del nu and del nu + 1},
\[
\delta_\nu^{2\ell+2}e^{-\f{t}{2}\mathcal L_{\nu+1}}=\delta_{\nu+1}^{2\ell+2}e^{-\f{t}{2}\mathcal L_{\nu+1}}+\f{2\ell+2}{x}\delta_{\nu+1}^{2\ell+1}e^{-\f{t}{2}\mathcal L_{\nu+1}}.
\]		
This, together with \eqref{eq- delta k pt} and the inductive hypothesis for $k=2\ell+1$, implies that 
\[
\begin{aligned}
	|\delta_\nu^{2\ell+2}p_{t/2}^{\nu+1}(x,y)|\lesi \ &\f{1}{t^{(2\ell+3)/2}} \exp\Big(-\f{|x-y|^2}{2ct}\Big) \Big(1+\f{\sqrt t}{x}\Big)^{-(\nu+3/2)}\\
	&+ \f{1}{x}\f{1}{t^{(2\ell+2)/2}} \exp\Big(-\f{|x-y|^2}{2ct}\Big) \Big(1+\f{\sqrt t}{x}\Big)^{-(\nu+5/2)}\\
	\lesi \ &\f{1}{t^{(2\ell+3)/2}} \exp\Big(-\f{|x-y|^2}{2ct}\Big) \Big(1+\f{\sqrt t}{x}\Big)^{-(\nu+3/2)}.
\end{aligned}
\]		
		This completes our proof.

\end{proof}

\begin{prop}\label{prop1- delta partial}
	Let $\nu>-1$. Then for each $k,\ell\in \mathbb N$, we have
	\[
	|\delta_\nu^k\partial_t^\ell p_t^\nu(x,y)|\lesi \f{1}{t^{(k+\ell+1)/2}}\exp\Big(-\f{|x-y|^2}{ct}\Big) \Big(1+\f{\sqrt t}{x}\Big)^{\gamma_{\nu}} \Big(1+\f{\sqrt t}{y}\Big)^{\gamma_{\nu}}
	\]
	for all $t>0$ and $x,y\in \mathbb R_+$.
\end{prop}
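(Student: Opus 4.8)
The plan is to reduce the statement to the two estimates already in hand---the spatial bounds of Theorem~\ref{thm-delta k pnu} and the time-derivative bound \eqref{eq-partial pt}---by splitting the semigroup and composing kernels. The starting point is the operator identity $\partial_t^\ell e^{-t\mathcal L_\nu}=(-\mathcal L_\nu)^\ell e^{-t\mathcal L_\nu}$, valid by the analyticity of the heat semigroup. Factoring $e^{-t\mathcal L_\nu}=e^{-\f{t}{2}\mathcal L_\nu}e^{-\f{t}{2}\mathcal L_\nu}$ and using that $\mathcal L_\nu$ commutes with its own semigroup, I would write
\[
\delta_\nu^k\partial_t^\ell e^{-t\mathcal L_\nu}=\Big[\delta_\nu^k e^{-\f{t}{2}\mathcal L_\nu}\Big]\circ\Big[(-\mathcal L_\nu)^\ell e^{-\f{t}{2}\mathcal L_\nu}\Big].
\]
Since $(-\mathcal L_\nu)^\ell e^{-s\mathcal L_\nu}=\partial_s^\ell e^{-s\mathcal L_\nu}$, the kernel of the right-hand factor is $(\partial_s^\ell p_s^\nu)(z,y)$ evaluated at $s=t/2$, so at the level of kernels this reads
\[
\delta_\nu^k\partial_t^\ell p_t^\nu(x,y)=\int_0^\vc \delta_\nu^k p_{t/2}^\nu(x,z)\,\big(\partial_s^\ell p_s^\nu\big)(z,y)\big|_{s=t/2}\,dz.
\]

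Next I would bound each factor. Abbreviate the right-hand kernel by $q_\ell(z,y):=\big(\partial_s^\ell p_s^\nu\big)(z,y)\big|_{s=t/2}$. Applying \eqref{eq-partial pt} with $k$ replaced by $\ell$ at time $t/2$, and using $t/2\sim t$ together with $e^{-t/4}\le 1$, I obtain
\[
|q_\ell(z,y)|\lesi \f{1}{t^{\ell/2}}\,H_{t,\gamma_\nu,c}(z,y),
\]
where $H_{t,\gamma_\nu,c}$ is the kernel from Lemma~\ref{lem-elementary lemma}; this is admissible because $\gamma_\nu\in[0,1/2)$, which matches the hypothesis $a\in[0,1/2)$ of that lemma. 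For the left-hand factor I would use Theorem~\ref{thm-delta k pnu} when $k\ge 1$ and \eqref{eq-partial pt} with $k=0$ when $k=0$. When $k$ is even (including $k=0$) the bound already has the product form $\f{1}{t^{k/2}}H_{t,\gamma_\nu,c}(x,z)$; when $k$ is odd it carries only the factor $\big(1+\f{\sqrt t}{z}\big)^{\gamma_\nu}$, and since $\big(1+\f{\sqrt t}{x}\big)^{\gamma_\nu}\ge 1$ I simply bound it from above by the same $\f{1}{t^{k/2}}H_{t,\gamma_\nu,c}(x,z)$, losing nothing that the claimed product-weight estimate does not already permit.

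Inserting both estimates into the kernel identity and applying \eqref{eq-product H less than H} then gives
\[
|\delta_\nu^k\partial_t^\ell p_t^\nu(x,y)|\lesi \f{1}{t^{(k+\ell)/2}}\int_0^\vc H_{t,\gamma_\nu,c}(x,z)\,H_{t,\gamma_\nu,c}(z,y)\,dz\lesi \f{1}{t^{(k+\ell)/2}}\,H_{t,\gamma_\nu,4c}(x,y),
\]
and unwinding the definition of $H$ produces exactly $\f{1}{t^{(k+\ell+1)/2}}\exp\big(-\f{|x-y|^2}{4ct}\big)\big(1+\f{\sqrt t}{x}\big)^{\gamma_\nu}\big(1+\f{\sqrt t}{y}\big)^{\gamma_\nu}$, which is the asserted bound after relabelling $c$. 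The boundary cases $\ell=0$ and $k=0$ are subsumed in this computation, reducing respectively to Theorem~\ref{thm-delta k pnu} and to \eqref{eq-partial pt}.

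The one step requiring genuine care is the justification of the kernel identity itself: one must verify that $\delta_\nu^k$ acts only on the left factor, that $(-\mathcal L_\nu)^\ell$ indeed produces the right factor, and that differentiation in $t$ may be interchanged with the spatial integral. All of this is routine given the smoothness of $p_t^\nu$ and the Gaussian-type bounds already established, which guarantee the absolute convergence needed to legitimise the manipulations. Once the identity is in place, the remainder is purely the weight bookkeeping described above together with a single application of Lemma~\ref{lem-elementary lemma}, and I expect no essential difficulty beyond this.
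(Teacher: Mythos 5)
Your proposal is correct and follows essentially the same route as the paper's proof: the paper likewise writes $\delta_\nu^k\partial_t^\ell e^{-t\mathcal L_\nu}$ as $\delta_\nu^k e^{-\frac{t}{2}\mathcal L_\nu}$ composed with $\partial_t^\ell e^{-\frac{t}{2}\mathcal L_\nu}$, passes to the kernel composition bound, and then invokes Theorem \ref{thm-delta k pnu}, \eqref{eq-partial pt} and Lemma \ref{lem-elementary lemma}. Your treatment is in fact slightly more careful than the paper's, since you spell out the odd/even bookkeeping for $k$, the boundary cases $k=0$ and $\ell=0$, and the justification of the kernel identity, all of which the paper leaves implicit.
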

\begin{proof}
	Note that $\partial_t^\ell e^{-t\mathcal L_\nu} = (-1)^\ell \mathcal L_\nu^ke^{-t\mathcal L_\nu}$. Hence, we can write
	\[
	\delta_\nu^k\partial_t^\ell e^{-t\mathcal L_\nu} =(-1)^\ell \delta_\nu^k e^{-\f{t}{2}\mathcal L_\nu}\circ  \mathcal L_\nu e^{-\f{t}{2}\mathcal L_\nu} = 2^\ell  \delta_\nu^k e^{-\f{t}{2}\mathcal L_\nu}\circ \partial_t^\ell e^{-\f{t}{2}\mathcal L_\nu},
	\]
	which implies
	\begin{equation}\label{eq- proof of prop 1}
	|\delta_\nu^k\partial_t^\ell p_t^\nu(x,y)|\lesi \int_{\mathbb R_+} |\delta_\nu^kp_{t/2}^\nu(x,z)| |\partial_t^\ell p_{t/2}^\nu(z,y)|dz.
	\end{equation}
	This, together with Proposition \ref{thm-delta k pnu}, \eqref{eq-partial pt} and Lemma \ref{lem-elementary lemma}, deduces the desired estimate.
	
	This complete our proof.
\end{proof}
\medskip
\begin{prop}\label{prop2- delta partial}
	Let $\nu>-1$. Then for each $\ell\in \mathbb N$, we have
	\[
	|\delta^*_\nu\partial_t^\ell p_t^{\nu+1}(x,y)|\lesi \f{1}{t^{(\ell+2)/2}}\exp\Big(-\f{|x-y|^2}{ct}\Big) \Big(1+\f{\sqrt t}{x}\Big)^{\gamma_{\nu}}
	\]
	for all $t>0$ and $x,y\in \mathbb R_+$.
\end{prop}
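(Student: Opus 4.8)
The plan is to mirror the semigroup-splitting argument used in the proof of Proposition \ref{prop1- delta partial}, exploiting the decisive fact that $\gamma_{\nu+1}=0$ for every $\nu>-1$ (since $\nu+1>-1/2$), so that the ``heat factor'' carries no singular weight. Concretely, I would first record the operator identity $\partial_t^\ell e^{-t\mathcal L_{\nu+1}}=(-\mathcal L_{\nu+1})^\ell e^{-t\mathcal L_{\nu+1}}$ and split the semigroup at time $t/2$. Since $(-\mathcal L_{\nu+1})^\ell$ commutes with $e^{-\f t2\mathcal L_{\nu+1}}$, I can place $\delta^*_\nu$ on the first factor and all the time derivatives on the second, namely
\[
\delta^*_\nu\partial_t^\ell e^{-t\mathcal L_{\nu+1}}=\big[\delta^*_\nu e^{-\f t2\mathcal L_{\nu+1}}\big]\circ\big[(-\mathcal L_{\nu+1})^\ell e^{-\f t2\mathcal L_{\nu+1}}\big].
\]
Passing to kernels and using that $(-\mathcal L_{\nu+1})^\ell e^{-s\mathcal L_{\nu+1}}$ has kernel $\partial_s^\ell p_s^{\nu+1}$, this produces the integral representation
\[
|\delta^*_\nu\partial_t^\ell p_t^{\nu+1}(x,y)|\lesi\int_{\mathbb R_+}|\delta^*_\nu p_{t/2}^{\nu+1}(x,z)|\,|\partial_t^\ell p_{t/2}^{\nu+1}(z,y)|\,dz,
\]
the exact analogue of \eqref{eq- proof of prop 1}.

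Next I would bound the two factors separately. For the first factor, \eqref{eq- delta* pt} gives $|\delta^*_\nu p_{t/2}^{\nu+1}(x,z)|\lesi t^{-1}\exp(-|x-z|^2/ct)(1+\sqrt t/x)^{\gamma_\nu}$. For the second factor, \eqref{eq-partial pt} applied with $\nu$ replaced by $\nu+1$ (admissible since $\nu+1>-1$) gives $|\partial_t^\ell p_{t/2}^{\nu+1}(z,y)|\lesi t^{-(\ell+1)/2}\exp(-|z-y|^2/ct)$; here the weights $(1+\sqrt t/z)^{\gamma_{\nu+1}}(1+\sqrt t/y)^{\gamma_{\nu+1}}$ disappear precisely because $\gamma_{\nu+1}=0$, and the factor $e^{-t/4}$ together with the replacement of $t$ by $t/2$ only contributes harmless constants. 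The crucial structural point is that the sole surviving weight, $(1+\sqrt t/x)^{\gamma_\nu}$, depends on $x$ alone and hence pulls out of the $z$-integral.

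Finally I would perform the Gaussian convolution in $z$ using Lemma \ref{lem-elementary lemma} in the case $a=0$ (equivalently, a direct computation), which yields $\int_{\mathbb R_+}\exp(-|x-z|^2/ct)\exp(-|z-y|^2/ct)\,dz\lesi\sqrt t\,\exp(-|x-y|^2/4ct)$. Collecting the powers $t^{-1}\cdot t^{-(\ell+1)/2}\cdot\sqrt t=t^{-(\ell+2)/2}$ and reattaching the $x$-weight gives exactly the claimed bound. I do not expect a genuine obstacle: the entire content of the statement is the observation $\gamma_{\nu+1}=0$, which strips the singular weight from the heat factor and leaves a clean one-sided weight attached to the $\delta^*_\nu$ factor; the only care needed is to confirm that the weights produced by \eqref{eq-partial pt} really vanish at the shifted order $\nu+1$.
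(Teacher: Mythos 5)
Your proposal is correct and follows essentially the same route as the paper: the paper also splits the semigroup at time $t/2$ to get $|\delta^*_\nu\partial_t^\ell p_t^{\nu+1}(x,y)|\lesssim\int_{\mathbb R_+}|\delta_\nu^* p_{t/2}^{\nu+1}(x,z)|\,|\partial_t^\ell p_{t/2}^{\nu+1}(z,y)|\,dz$ and then combines \eqref{eq- delta* pt}, \eqref{eq-partial pt} and Lemma \ref{lem-elementary lemma}. Your additional remarks (that $\gamma_{\nu+1}=0$ kills the weights on the heat factor and that the surviving weight $(1+\sqrt t/x)^{\gamma_\nu}$ pulls out of the $z$-integral) correctly spell out the details the paper leaves implicit.
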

\begin{proof}
	Similarly to the proof of Proposition \ref{prop1- delta partial}, we have
	\[
	|\delta^*_\nu\partial_t^\ell p_t^{\nu+1}(x,y)|\lesi \int_{\mathbb R_+} |\delta_\nu^* p_{t/2}^{\nu+1}(x,z)| |\partial_t^\ell p_{t/2}^{\nu+1}(z,y)|dz.
	\]
	This, in combination with \eqref{eq- delta* pt},  \eqref{eq-partial pt} and Lemma \ref{lem-elementary lemma}, yields the desired estimate.
	
	This completes our proof.
\end{proof}
\begin{prop}\label{prop3- delta partial}
	Let $\nu>-1/2$. For \textbf{ any odd} $k\in \mathbb N$ and $\ell\in \mathbb N$, we have
	\begin{equation*}
		|\delta_{\nu}^k\partial_t^\ell p_t^{\nu}(x,y)|
		\lesi  \f{1}{t^{(k+\ell+1)/2}} \exp\Big(-\f{|x-y|^2}{ct}\Big) \Big(1+\f{\sqrt t}{x}\Big)^{-(\nu+3/2)}
	\end{equation*}
	for all $x,y\in \mathbb R_+$ and $t>0$.
	
	Consequently, for any \textbf{ any odd} $k\in \mathbb N$,
	\[
	\f{1}{x}|\delta_{\nu}^k \partial_t^\ell p_t^{\nu}(x,y)|
	\lesi  \f{1}{t^{(k+\ell+2)/2}} \exp\Big(-\f{|x-y|^2}{ct}\Big) \Big(1+\f{\sqrt t}{x}\Big)^{-(\nu+1/2)}
	\]
	for all $x,y\in \mathbb R_+$ and $t>0$.
	
\end{prop}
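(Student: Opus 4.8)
The plan is to reuse, essentially verbatim, the semigroup-splitting scheme from the proofs of Propositions~\ref{prop1- delta partial} and~\ref{prop2- delta partial}, but to feed in the improved odd-order bound of Proposition~\ref{prop-2-stronger estimate on delta k} in place of the generic estimate of Theorem~\ref{thm-delta k pnu}. Since $\partial_t^\ell e^{-t\mathcal L_\nu}=(-1)^\ell\mathcal L_\nu^\ell e^{-t\mathcal L_\nu}$, I would first write
\[
\delta_\nu^k\partial_t^\ell e^{-t\mathcal L_\nu}=2^\ell\,\delta_\nu^k e^{-\frac{t}{2}\mathcal L_\nu}\circ\partial_t^\ell e^{-\frac{t}{2}\mathcal L_\nu},
\]
which produces the kernel bound
\[
|\delta_\nu^k\partial_t^\ell p_t^\nu(x,y)|\lesi\int_{\mathbb R_+}|\delta_\nu^k p_{t/2}^\nu(x,z)|\,|\partial_t^\ell p_{t/2}^\nu(z,y)|\,dz.
\]

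Next I would estimate the two factors separately. Because $k$ is odd and $\nu>-1/2$, Proposition~\ref{prop-2-stronger estimate on delta k} gives
\[
|\delta_\nu^k p_{t/2}^\nu(x,z)|\lesi\frac{1}{t^{k/2}}\Big(1+\frac{\sqrt t}{x}\Big)^{-(\nu+3/2)}\,\frac{1}{\sqrt t}\exp\Big(-\frac{|x-z|^2}{ct}\Big),
\]
where the key point is that the weight factor depends on $x$ alone. For the second factor, the hypothesis $\nu>-1/2$ forces $\gamma_\nu=0$, so the time-derivative bound~\eqref{eq-partial pt} collapses (after discarding $e^{-t/4}\le 1$) to
\[
|\partial_t^\ell p_{t/2}^\nu(z,y)|\lesi\frac{1}{t^{\ell/2}}\,\frac{1}{\sqrt t}\exp\Big(-\frac{|z-y|^2}{ct}\Big).
\]
I would then pull the $x$-only weight outside the integral and apply Lemma~\ref{lem-elementary lemma} with $a=0$ to the remaining Gaussian convolution, which contributes one factor $t^{1/2}$ and a Gaussian in $|x-y|$. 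Summing the powers of $t$ yields $-k/2-\ell/2-1/2=-(k+\ell+1)/2$, exactly the claimed exponent, giving the first estimate.

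Finally, the ``Consequently'' part follows from the main estimate by multiplying through by $1/x$ and invoking the elementary identity
\[
\frac{1}{x}\Big(1+\frac{\sqrt t}{x}\Big)^{-1}=\frac{1}{x+\sqrt t}\le\frac{1}{\sqrt t},
\]
which converts one unit of the weight exponent into a factor $t^{-1/2}$, i.e.
\[
\frac{1}{x}\Big(1+\frac{\sqrt t}{x}\Big)^{-(\nu+3/2)}\le\frac{1}{\sqrt t}\Big(1+\frac{\sqrt t}{x}\Big)^{-(\nu+1/2)}.
\]
This raises the power of $t$ to $(k+\ell+2)/2$ and lowers the weight exponent to $-(\nu+1/2)$, as desired. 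I do not expect a genuine obstacle here, since the argument is a routine composition of bounds already in hand; the only delicate point is that the improved odd-$k$ estimate of Proposition~\ref{prop-2-stronger estimate on delta k} localizes its weight entirely on the $x$-variable, so that this weight survives the Gaussian convolution intact — which is precisely why the stronger estimate, rather than the generic one of Theorem~\ref{thm-delta k pnu}, must be used.
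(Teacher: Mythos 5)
Your proposal is correct and follows essentially the same route as the paper: the paper's proof likewise combines the composition bound \eqref{eq- proof of prop 1}, the time-derivative estimate \eqref{eq-partial pt} (with $\gamma_\nu=0$ since $\nu>-1/2$), and the improved odd-order bound of Proposition \ref{prop-2-stronger estimate on delta k }, with the Gaussian convolution handled by Lemma \ref{lem-elementary lemma}, and derives the second inequality from the first exactly as you do. Your write-up is in fact more detailed than the paper's one-line proof, and your observation that the odd-$k$ weight sits entirely on the $x$-variable (so it passes unharmed through the convolution in $z$) is precisely the point that makes the argument work.
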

\begin{proof}
	The second inequality is a  consequence of the first one, while the first follows directly from \eqref{eq- proof of prop 1}, \eqref{eq-partial pt} and Proposition \ref{prop-2-stronger estimate on delta k }.
	
	This completes our proof.
\end{proof}

\bigskip

\noindent\underline{\textbf{The case $n\ge 2$}}

\bigskip


The following theorem is a consequence of Theorem \ref{thm-delta k pnu} and \eqref{eq- prod ptnu}.
\begin{thm}\label{thm1- kernel est  n ge 2}
Let $\nu\in (-1,\vc)^n$ and $k\in \mathbb N^k$. Then we have
\[
|\delta_\nu^kp_t^\nu(x,y)|\lesi \f{1}{t^{(n+|k|)/2}}\exp\Big(-\f{|x-y|^2}{ct}\Big)\prod_{j=1}^n \Big(1+\f{\sqrt t}{y_j}\Big)^{\gamma_{\nu_j}}\prod_{j: k_j \ {\rm is\ even} } \Big(1+\f{\sqrt t}{x_j}\Big)^{\gamma_{\nu_j}}
\]
for all $t>0$ and $x,y\in \Rn_+$.
\end{thm}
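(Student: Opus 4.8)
The plan is to reduce the $n$-dimensional estimate to the one-dimensional results already established, exploiting the tensor-product structure of both the heat kernel and the operator $\delta_\nu^k$. First I would observe that, by the definition $\delta_\nu^k=\delta_{\nu_n}^{k_n}\cdots\delta_{\nu_1}^{k_1}$, each factor $\delta_{\nu_j}$ is a differential operator in the single variable $x_j$ alone. Since the heat kernel factorizes as in \eqref{eq- prod ptnu}, the operators $\delta_{\nu_j}^{k_j}$ act on pairwise distinct variables and hence commute, so that
\[
\delta_\nu^k p_t^\nu(x,y)=\prod_{j=1}^n \delta_{\nu_j}^{k_j} p_t^{\nu_j}(x_j,y_j).
\]
This tensorization is the key structural identity; once it is in place the estimate becomes a genuine product of one-dimensional bounds.

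Next I would bound each factor separately according to the parity of $k_j$. For those $j$ with $k_j\ge 1$, Theorem \ref{thm-delta k pnu} applies directly: when $k_j$ is odd it contributes $t^{-(k_j+1)/2}\exp(-|x_j-y_j|^2/(ct))(1+\sqrt t/y_j)^{\gamma_{\nu_j}}$, and when $k_j$ is even it contributes the same expression with the extra weight $(1+\sqrt t/x_j)^{\gamma_{\nu_j}}$. The remaining case $k_j=0$ (which is even) is not covered by Theorem \ref{thm-delta k pnu}, so there I would instead invoke the pure heat-kernel bound \eqref{eq-partial pt} with $k=0$, which supplies precisely $t^{-1/2}\exp(-|x_j-y_j|^2/(ct))(1+\sqrt t/x_j)^{\gamma_{\nu_j}}(1+\sqrt t/y_j)^{\gamma_{\nu_j}}$, matching the ``even'' pattern.

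Finally I would multiply the $n$ one-dimensional bounds. The time powers combine as $\prod_{j=1}^n t^{-(k_j+1)/2}=t^{-(|k|+n)/2}$; the Gaussian factors combine through $\sum_{j=1}^n|x_j-y_j|^2=|x-y|^2$, after replacing the (possibly distinct) constants in the exponentials by a single common constant $c$; and the weights reproduce $\prod_{j=1}^n(1+\sqrt t/y_j)^{\gamma_{\nu_j}}$ together with $\prod_{j:\,k_j\text{ even}}(1+\sqrt t/x_j)^{\gamma_{\nu_j}}$, which is exactly the claimed bound. I do not expect a substantive obstacle: the argument is essentially bookkeeping resting on the tensor structure. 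The only points deserving care are the separate treatment of the index set $\{j:k_j=0\}$, for which Theorem \ref{thm-delta k pnu} is silent, and the harmonizing of the Gaussian constants across the $n$ factors; both are routine.
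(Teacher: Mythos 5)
Your proposal is correct and follows essentially the same route as the paper, which derives this theorem directly from the factorization \eqref{eq- prod ptnu} together with the one-dimensional bounds of Theorem \ref{thm-delta k pnu}. Your explicit treatment of the indices with $k_j=0$ via \eqref{eq-partial pt} is a detail the paper leaves implicit, but it is the natural (and correct) way to fill that case.
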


Theorems \ref{thm1s- kernel est  n ge 2}, \ref{thm2- kernel est  n ge 2} and Theorem \ref{thm3- kernel est  n ge 2} below are consequences of Propositions \ref{prop1- delta partial}, \ref{prop2- delta partial}, \ref{prop3- delta partial} and \eqref{eq- prod ptnu}.
\begin{thm}\label{thm1s- kernel est  n ge 2}
	Let $\nu\in (-1,\vc)^n$, $k\in \mathbb N^k$ and $\ell\in \mathbb N$. Then we have
	\[
	|\delta_\nu^k\partial_t^\ell p_t^\nu(x,y)|\lesi \f{1}{t^{(n+|k|+\ell)/2}}\exp\Big(-\f{|x-y|^2}{ct}\Big)\prod_{j=1}^n  \Big(1+\f{\sqrt t}{x_j}\Big)^{\gamma_{\nu_j}} \Big(1+\f{\sqrt t}{y_j}\Big)^{\gamma_{\nu_j}} 
	\]
	for all $t>0$ and $x,y\in \Rn_+$.
\end{thm}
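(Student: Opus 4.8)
The plan is to reduce the multi-dimensional estimate to the one-dimensional estimates already established, exploiting the product structure of the heat kernel and the operator $\delta_\nu^k$. Specifically, since $\mathcal L_\nu = \sum_{j=1}^n \mathcal L_{\nu_j}$ and the $\mathcal L_{\nu_j}$ act on distinct variables, the heat kernel factorizes as in \eqref{eq- prod ptnu}, namely $p_t^\nu(x,y)=\prod_{j=1}^n p_t^{\nu_j}(x_j,y_j)$. Moreover, the operator $\delta_{\nu_i}$ only involves the variable $x_i$, so the one-variable differential operators $\delta_{\nu_i}^{k_i}$ and the time-derivative $\partial_t^\ell$ interact in a controlled way across the tensor factors.

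The subtlety is that $\partial_t^\ell$ does \emph{not} factorize over the variables, since $\partial_t$ acting on a product produces, by the Leibniz rule, a sum of terms $\prod_j \partial_t^{\ell_j} p_t^{\nu_j}(x_j,y_j)$ with $\ell_1+\cdots+\ell_n=\ell$. The first step I would take is therefore to write
\[
\partial_t^\ell p_t^\nu(x,y)=\sum_{\ell_1+\cdots+\ell_n=\ell}\binom{\ell}{\ell_1,\ldots,\ell_n}\prod_{j=1}^n \partial_t^{\ell_j} p_t^{\nu_j}(x_j,y_j),
\]
and then apply $\delta_\nu^k=\delta_{\nu_n}^{k_n}\cdots\delta_{\nu_1}^{k_1}$, noting that $\delta_{\nu_j}^{k_j}$ hits only the $j$-th factor. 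This yields a finite sum of products $\prod_{j=1}^n \delta_{\nu_j}^{k_j}\partial_t^{\ell_j} p_t^{\nu_j}(x_j,y_j)$.

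The second step is to bound each one-variable factor using the results already proved for $n=1$: Proposition \ref{prop1- delta partial} gives
\[
|\delta_{\nu_j}^{k_j}\partial_t^{\ell_j} p_t^{\nu_j}(x_j,y_j)|\lesi \f{1}{t^{(k_j+\ell_j+1)/2}}\exp\Big(-\f{|x_j-y_j|^2}{ct}\Big)\Big(1+\f{\sqrt t}{x_j}\Big)^{\gamma_{\nu_j}}\Big(1+\f{\sqrt t}{y_j}\Big)^{\gamma_{\nu_j}}.
\]
Multiplying these over $j$ and summing over the multinomial decomposition, the exponents of $t^{-1/2}$ add up to $\sum_j(k_j+\ell_j+1)=|k|+\ell+n$, giving the prefactor $t^{-(n+|k|+\ell)/2}$. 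The Gaussian factors multiply to $\exp(-\sum_j|x_j-y_j|^2/(ct))=\exp(-|x-y|^2/(ct))$ since $|x-y|^2=\sum_j|x_j-y_j|^2$, and the $\gamma_{\nu_j}$-weight factors assemble into exactly the claimed product $\prod_{j=1}^n (1+\sqrt t/x_j)^{\gamma_{\nu_j}}(1+\sqrt t/y_j)^{\gamma_{\nu_j}}$. Since there are only finitely many terms in the multinomial sum, the implicit constants remain uniform.

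The main obstacle, though a mild one, is justifying the factorization of $\delta_\nu^k\partial_t^\ell p_t^\nu$ rigorously: one must verify that the differential operators in different variables genuinely commute past the tensor-product kernel and that the time derivative distributes as claimed. This is routine given that the semigroups $e^{-t\mathcal L_{\nu_j}}$ commute and act on independent variables, so the product rule applies at the level of the integral kernels. Once this bookkeeping is done, the estimate follows purely by combining the one-dimensional bounds, and no further analysis is required.
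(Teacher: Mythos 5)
Your proposal is correct and follows essentially the same route as the paper: the paper disposes of this theorem in one sentence, declaring it a consequence of the product formula \eqref{eq- prod ptnu} and the one-dimensional estimate in Proposition \ref{prop1- delta partial}, which is exactly the reduction you carry out (your explicit multinomial Leibniz expansion of $\partial_t^\ell$ over the tensor factors is the bookkeeping the paper leaves implicit).
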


\begin{thm}\label{thm2- kernel est  n ge 2}
	Let $\nu\in (-1,\vc)^n$, $k\in\{0,1\}^n$ and $\ell\in \mathbb N$. Then we have
	\[
	|(\delta^*_\nu)^k\partial_t^\ell p_t^{\nu+k}(x,y)|\lesi \f{1}{t^{(n+|k|+\ell)/2}}\exp\Big(-\f{|x-y|^2}{ct}\Big)\prod_{j=1}^n \Big(1+\f{\sqrt t}{x_j}\Big)^{\gamma_{\nu_j}}\prod_{j: k_j=0 } \Big(1+\f{\sqrt t}{y_j}\Big)^{\gamma_{\nu_j}}
	\]
	for all $t>0$ and $x,y\in \Rn_+$.
\end{thm}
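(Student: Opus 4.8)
The plan is to exploit the product structure of the heat kernel recorded in \eqref{eq- prod ptnu} together with the one-dimensional estimates already established, so that the $n$-dimensional bound assembles from $n$ one-dimensional bounds. Since $k\in\{0,1\}^n$ and the operator $(\delta^*_\nu)^k=(\delta^*_{\nu_n})^{k_n}\cdots(\delta^*_{\nu_1})^{k_1}$ factors into single-variable operators, each $\delta^*_{\nu_j}$ acting only on the $j$-th pair $(x_j,y_j)$, the only genuinely multidimensional feature is the time derivative $\partial_t^\ell$, which couples all the factors through the product.

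First I would apply the multinomial Leibniz rule to $\partial_t^\ell$ acting on $p_t^{\nu+k}(x,y)=\prod_{j=1}^n p_t^{\nu_j+k_j}(x_j,y_j)$, obtaining the finite sum
\[
(\delta^*_\nu)^k\partial_t^\ell p_t^{\nu+k}(x,y)=\sum_{\ell_1+\cdots+\ell_n=\ell}\binom{\ell}{\ell_1,\ldots,\ell_n}\prod_{j=1}^n(\delta^*_{\nu_j})^{k_j}\partial_t^{\ell_j}p_t^{\nu_j+k_j}(x_j,y_j),
\]
where the factorization inside the product is legitimate precisely because $(\delta^*_{\nu_j})^{k_j}$ and $\partial_t^{\ell_j}$ act only on the $j$-th variable. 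The multinomial coefficients are fixed constants depending only on $\ell$ and $n$, and the number of summands is bounded by a constant, so it suffices to estimate one generic product and then sum.

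Next I would estimate each one-dimensional factor in two cases. When $k_j=0$ the factor is $\partial_t^{\ell_j}p_t^{\nu_j}(x_j,y_j)$, which \eqref{eq-partial pt} bounds (after using $e^{-t/2}\le 1$) by $t^{-(\ell_j+1)/2}\exp(-|x_j-y_j|^2/(ct))(1+\sqrt t/x_j)^{\gamma_{\nu_j}}(1+\sqrt t/y_j)^{\gamma_{\nu_j}}$; when $k_j=1$ the factor is $\delta^*_{\nu_j}\partial_t^{\ell_j}p_t^{\nu_j+1}(x_j,y_j)$, which Proposition \ref{prop2- delta partial} bounds by $t^{-(\ell_j+2)/2}\exp(-|x_j-y_j|^2/(ct))(1+\sqrt t/x_j)^{\gamma_{\nu_j}}$. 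In both cases the power of $t$ is $t^{-(\ell_j+1+k_j)/2}$, an $x_j$-weight $(1+\sqrt t/x_j)^{\gamma_{\nu_j}}$ is always present, and a $y_j$-weight appears exactly when $k_j=0$. Taking products over $j$, summing the exponents via $\sum_j(\ell_j+1+k_j)/2=(n+|k|+\ell)/2$, combining the Gaussians through $\sum_j|x_j-y_j|^2=|x-y|^2$ after replacing the constants $c$ by their maximum, and collecting the weights as $\prod_{j=1}^n(1+\sqrt t/x_j)^{\gamma_{\nu_j}}\prod_{j:k_j=0}(1+\sqrt t/y_j)^{\gamma_{\nu_j}}$ yields the claimed bound for each summand, hence for the whole finite sum.

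The argument is therefore essentially bookkeeping layered on top of the assumed one-dimensional results, with no new analytic input required. The only point deserving care is the source of the asymmetry in the conclusion: the $y_j$-weight is absent precisely when $k_j=1$, which is a direct reflection of the one-sided weight in Proposition \ref{prop2- delta partial}, where applying the adjoint derivative $\delta^*_{\nu_j}$ removes the $y$-side factor while preserving the $x$-side one. Correctly matching, across all $n$ factors, which variable still carries a weight is the main thing to keep straight.
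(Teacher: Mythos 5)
Your proposal is correct and follows exactly the route the paper intends: the paper states this theorem as a direct consequence of Proposition \ref{prop2- delta partial}, the one-dimensional time-derivative bound \eqref{eq-partial pt} (equivalently Proposition \ref{prop1- delta partial} with $k=0$), and the product formula \eqref{eq- prod ptnu}, and your Leibniz-rule expansion with the case analysis $k_j=0$ versus $k_j=1$ is precisely the bookkeeping the paper leaves to the reader. Your closing observation that the missing $y_j$-weight for $k_j=1$ traces back to the one-sided weight in Proposition \ref{prop2- delta partial} is also the right explanation of the asymmetry in the stated bound.
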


\begin{thm}\label{thm3- kernel est  n ge 2}
	Let $\nu\in (-1,\vc)^n$, $k=(k_1,\ldots, k_n)\in \mathbb N^k$ and $\ell\in \mathbb N$. Assume that all odd entries of $k$ are greater than $-1/2$. Then we have
	\[
	\begin{aligned}
		\Big|\prod_{j: k_j \ \text{\rm is odd}}  \f{1}{x_j} \delta_{\nu_j}^{k_j}\prod_{j: k_j \ \text{\rm is even}}& \delta_{\nu_j}^{k_j}\partial_t^\ell p_t^{\nu}(x,y)\Big|\\
		&\lesi \f{1}{t^{(n+|k|+\ell)/2}}\exp\Big(-\f{|x-y|^2}{ct}\Big)\prod_{j=1}^n \Big(1+\f{\sqrt t}{y_j}\Big)^{\gamma_{\nu_j}}\prod_{j: k_j {\rm is\ even} } \Big(1+\f{\sqrt t}{x_j}\Big)^{\gamma_{\nu_j}}
	\end{aligned}
	\]
	for all $t>0$ and $x,y\in \Rn_+$.
\end{thm}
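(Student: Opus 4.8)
The plan is to reduce the estimate to its one-dimensional counterparts by exploiting the tensor-product form of the heat kernel in \eqref{eq- prod ptnu}, $p_t^\nu(x,y)=\prod_{j=1}^n p_t^{\nu_j}(x_j,y_j)$. The operators $\delta_{\nu_j}$ and the multipliers $\f{1}{x_j}$ act on the single variable $x_j$, so they respect this factorization; the only ingredient that does not split across the factors is $\partial_t^\ell$. Hence the first step is to expand the time derivative by the Leibniz rule,
\[
\partial_t^\ell \prod_{j=1}^n p_t^{\nu_j}(x_j,y_j)=\sum_{\ell_1+\cdots+\ell_n=\ell}\f{\ell!}{\ell_1!\cdots \ell_n!}\prod_{j=1}^n \partial_t^{\ell_j}p_t^{\nu_j}(x_j,y_j),
\]
and then to commute $\prod_{j:k_j\text{ odd}}\f{1}{x_j}\delta_{\nu_j}^{k_j}\prod_{j:k_j\text{ even}}\delta_{\nu_j}^{k_j}$ through the finite sum. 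This writes the quantity to be estimated as a finite sum, indexed by $\ell_1+\cdots+\ell_n=\ell$, of products over $j$ of the one-variable expressions $\f{1}{x_j}\delta_{\nu_j}^{k_j}\partial_t^{\ell_j}p_t^{\nu_j}(x_j,y_j)$ (for odd $k_j$) and $\delta_{\nu_j}^{k_j}\partial_t^{\ell_j}p_t^{\nu_j}(x_j,y_j)$ (for even $k_j$).

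The second step is to bound each one-variable factor with the appropriate one-dimensional estimate. For an even index $k_j$ I would apply Proposition \ref{prop1- delta partial} (valid for all $\nu_j>-1$), which supplies the rate $t^{-(k_j+\ell_j+1)/2}$ together with the pair of weights $(1+\f{\sqrt t}{x_j})^{\gamma_{\nu_j}}(1+\f{\sqrt t}{y_j})^{\gamma_{\nu_j}}$. For an odd index $k_j$ the standing assumption that $\nu_j>-1/2$ (so that $\gamma_{\nu_j}=0$) allows me to use the second inequality of Proposition \ref{prop3- delta partial}, which already incorporates the prefactor $\f{1}{x_j}$ and gives $t^{-(k_j+\ell_j+2)/2}$ times the factor $(1+\f{\sqrt t}{x_j})^{-(\nu_j+1/2)}\le 1$, which may be dropped.

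The third step is to multiply the $n$ one-variable bounds. The Gaussian factors combine through $|x-y|^2=\sum_{j=1}^n|x_j-y_j|^2$ into a single $\exp(-|x-y|^2/(ct))$ (after relabelling the constant $c$); the surviving weights assemble exactly into $\prod_{j=1}^n (1+\f{\sqrt t}{y_j})^{\gamma_{\nu_j}}\prod_{j:k_j\text{ even}}(1+\f{\sqrt t}{x_j})^{\gamma_{\nu_j}}$, since each odd index contributes $\gamma_{\nu_j}=0$ and therefore no $y_j$-weight and only the discarded $x_j$-factor; and the negative powers of $t$ coming from the factors add up, which together with $\ell_1+\cdots+\ell_n=\ell$ produces the power of $t$ on the right-hand side. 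Summing the finitely many multinomial terms changes only the implied constant.

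The whole argument is structurally simple because everything tensorizes; the one place to be careful, and the step I would treat as the crux, is the bookkeeping of the $t$-exponents in the Leibniz expansion, in particular handling the extra $\f{1}{x_j}$ at each odd index through the sharpened second inequality of Proposition \ref{prop3- delta partial} rather than a crude bound, and checking that the hypothesis on the odd entries is precisely what makes Proposition \ref{prop3- delta partial} applicable while simultaneously killing the corresponding $y_j$-weights.
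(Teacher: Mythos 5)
Your overall route is the same as the paper's: the paper disposes of this theorem in a single sentence, as a consequence of Propositions \ref{prop1- delta partial}, \ref{prop3- delta partial} and the product formula \eqref{eq- prod ptnu}, and your Leibniz expansion of $\partial_t^\ell$ followed by a factor-wise application of those propositions is exactly how that sentence must be implemented. Your handling of the weights is also correct: at an odd index the hypothesis (which you rightly read as $\nu_j>-1/2$) forces $\gamma_{\nu_j}=0$, so dropping the factor $(1+\sqrt t/x_j)^{-(\nu_j+1/2)}\le 1$ loses nothing.

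The problem is the step you yourself single out as the crux: the exponents do not add up to the stated power. For even $k_j$ you quote $t^{-(k_j+\ell_j+1)/2}$, but for odd $k_j$ the second inequality of Proposition \ref{prop3- delta partial} gives $t^{-(k_j+\ell_j+2)/2}$: the multiplier $\f{1}{x_j}$ costs an extra $t^{-1/2}$. Multiplying the one-dimensional bounds therefore yields the exponent
\[
\sum_{j:\ k_j\ \text{even}}\f{k_j+\ell_j+1}{2}\;+\;\sum_{j:\ k_j\ \text{odd}}\f{k_j+\ell_j+2}{2}\;=\;\f{n+|k|+\ell+n_o}{2},
\]
where $n_o$ is the number of odd entries of $k$, and not the exponent $\f{n+|k|+\ell}{2}$ of the statement; for small $t$ your bound is strictly weaker, so the printed inequality is not established. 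Moreover this cannot be repaired within the tensor argument: the paper's own sharpest one-dimensional estimates for precisely the quantity $\f{1}{x}\delta^{k}p_t$ (Proposition \ref{prop- delta k pt with 1/x k} and the second part of Proposition \ref{prop3- delta partial}) carry the exponent $(k+\ell+2)/2$, and inspection of \eqref{eq- chain rule 2} in the regime $x\sim y\sim\sqrt t$ shows this extra half power is genuinely there, so the exponent in the statement is evidently a misprint. The bound your computation actually delivers, with the extra $t^{-n_o/2}$, is the one consistent with the one-dimensional propositions and the one the paper in fact uses in the proof of Theorem \ref{mainthm-genral case}, where each factor $\f{1}{x_i}\delta_{\nu_i+1}^{k_i-2}$ is paired with $t^{(k_i-1)/2}$, i.e. the $\f{1}{x_i}$ is counted as an additional half power of $t$. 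So your argument proves the (presumably intended) corrected statement; but as a proof of the statement as written, its decisive arithmetic assertion is false, and a careful write-up should have flagged the mismatch rather than claimed the powers agree.
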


\subsection{$L^p$-boundedness of some integral operators}
 
\begin{lem}\label{lem-314}
	Let $\alpha\in (0,1)$. Define
	\[
	T_\alpha f(x) = \int_{x/2}^{2x} K_\alpha(x,y)f(y)dy, 
	\]
	where 
	\[
	K_\alpha(x,y)=\f{1}{x}\Big(\f{x}{|x-y|}\Big)^\alpha
	\]
	Then $T_\alpha$ is bounded on $L^p(\mathbb R_+)$ for all $\f{1}{1-\alpha}<p<\vc$.
\end{lem}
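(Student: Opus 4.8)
The plan is to prove the $L^p$-boundedness of $T_\alpha$ by a straightforward size estimate on the kernel, exploiting that the integration is restricted to the region $y\in(x/2,2x)$. On this region we have $x\sim y$, so the kernel behaves like $K_\alpha(x,y)\sim \frac{1}{x}\big(\frac{x}{|x-y|}\big)^\alpha$ with $x\sim y$, and the only singularity is the mild one at $y=x$ coming from $|x-y|^{-\alpha}$ with $\alpha\in(0,1)$. The key observation is that $T_\alpha$ is essentially a truncated fractional-type convolution operator, and we should be able to bound it using Young's inequality or Schur's test after a change of variables.

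First I would attempt a direct Schur test. To show boundedness on $L^p$ with $\frac{1}{1-\alpha}<p<\infty$, I would look for the dual exponent $p'$ and test functions of power type, say $w(y)=y^{-s}$, and verify the two Schur conditions
\[
\int_{x/2}^{2x} K_\alpha(x,y)\, y^{-s}\,dy \lesssim x^{-s}, \qquad \int_{y/2}^{2y} K_\alpha(x,y)\, x^{-s'}\,dx \lesssim y^{-s'},
\]
where $s,s'$ are chosen to match $p$. Since on the support $x\sim y$, each integral reduces to estimating $\frac{1}{x}x^\alpha \int_{x/2}^{2x}\frac{dy}{|x-y|^\alpha}\cdot x^{-s}$, and $\int_{x/2}^{2x}|x-y|^{-\alpha}dy \sim x^{1-\alpha}$ because $\alpha<1$ makes this integral convergent near $y=x$ with the outer scale $\sim x$. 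This yields $\frac{1}{x}x^\alpha x^{1-\alpha}x^{-s}=x^{-s}$, which is exactly the required homogeneity, and symmetrically for the dual side. Tracking the exponents will reveal the precise constraint $\frac{1}{1-\alpha}<p$.

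An alternative and perhaps cleaner route is to reduce to a convolution on the multiplicative group via the substitution $x=e^u$, $y=e^v$. Writing everything in terms of $u-v$ and using the homogeneity of $K_\alpha$ (it scales like $\frac{1}{x}$ times a function of $y/x$), the operator $T_\alpha$ becomes convolution with a kernel supported in $|u-v|\le \log 2$ whose only singularity is $\sim |u-v|^{-\alpha}$ near the origin; since $\alpha<1$ this kernel is integrable, and Young's inequality on $L^p$ of the group closes the argument. One must be careful that the change of variables introduces a Jacobian factor that shifts the exponent, which is precisely where the restriction $p>\frac{1}{1-\alpha}$ should emerge.

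The main obstacle I anticipate is pinning down the sharp lower threshold $p>\frac{1}{1-\alpha}$ rather than a weaker range. The subtlety is that the factor $\frac{1}{x}$ (as opposed to a symmetric $\frac{1}{\sqrt{xy}}$) breaks the symmetry between $x$ and $y$, so the Schur weights on the two sides are not the same and the admissible range of $s$ is one-sided; this asymmetry is exactly what forces the failure of boundedness for small $p$ and produces the endpoint $\frac{1}{1-\alpha}$. I would therefore carry out the Schur test carefully, allowing general power weights $x^{-s}$, and read off the compatibility condition on $s$ that can be met simultaneously for both Schur inequalities; the set of feasible $s$ will be nonempty precisely when $p>\frac{1}{1-\alpha}$.
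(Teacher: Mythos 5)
Your strategy is correct, but it takes a genuinely different route from the paper. The paper's proof is a maximal--function argument: it fixes $\frac{1}{1-\alpha}<r<p$ (equivalently $\alpha r'<1$), applies H\"older's inequality with exponents $r,r'$ to obtain
\[
T_\alpha f(x)\lesssim \Big[\frac{1}{x}\int_{x/2}^{2x}|f(y)|^r\,dy\Big]^{1/r}\Big[\frac{1}{x}\int_{x/2}^{2x}\Big(\frac{x}{|x-y|}\Big)^{\alpha r'}dy\Big]^{1/r'}\lesssim\big[\mathcal{M}(|f|^r)(x)\big]^{1/r},
\]
and then invokes the $L^{p/r}$-boundedness of the Hardy--Littlewood maximal function; the threshold $\frac{1}{1-\alpha}$ enters \emph{only} through the existence of the intermediate exponent $r$. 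Your Schur test (equivalently, your logarithmic substitution plus Young) rests on the same core computation $\int_{x/2}^{2x}|x-y|^{-\alpha}\,dy\sim x^{1-\alpha}$ but dispenses with the maximal function. One caveat, harmless for correctness but worth flagging: the obstacle you anticipate in your last paragraph does not exist, and the constraint $p>\frac{1}{1-\alpha}$ will \emph{not} emerge from your argument, because it is not intrinsic to $T_\alpha$. Since the kernel is supported where $x\sim y$, every power weight $y^{-s}$ is comparable to $x^{-s}$ there, so both Schur conditions hold for every $s$ and every $p$; indeed the row and column integrals are bounded by absolute constants, $\int_{x/2}^{2x}K_\alpha(x,y)\,dy=\frac{2^{\alpha-1}+1}{1-\alpha}$ and $\int_{y/2}^{2y}K_\alpha(x,y)\,dx\le\frac{1+2^{1-\alpha}}{1-\alpha}$, so $T_\alpha$ is in fact bounded on $L^p(\mathbb{R}_+)$ for all $1\le p\le\infty$. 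The same happens in your second approach: after setting $x=e^u$, $y=e^v$, the exponential factors coming from the Jacobian are bounded above and below on the support $|u-v|\le\log 2$, so Young's inequality applies for every $p$ and no restriction appears. In other words, the asymmetry of the factor $\frac{1}{x}$ is neutralized by the constraint $x\sim y$ on the support, the range stated in the lemma is an artifact of the paper's method (and of the range needed in its later applications), and your method proves a statement strictly stronger than the lemma. What the paper's route buys is uniformity with the maximal-function machinery used throughout the paper; what yours buys is a self-contained elementary argument valid on the full range of exponents.
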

\begin{proof}
	Fix $p\in (\f{1}{1-\alpha},\vc)$ and $\f{1}{1-\alpha}<r<p$. Then we have $\alpha r'<1$. This, together with  H\"older's inequality, implies 
	\[
	\begin{aligned}
		T_\alpha f(x)&\lesi \int_{x/2}^{2x} \f{1}{x}\Big(\f{x}{|x-y|}\Big)^{\alpha} |f(y)|dy\\
		&\lesi \Big[\f{1}{x}\int_{x/2}^{2x}  |f(y)|^rdy\Big]^{1/r}\Big[\f{1}{x}\int_{x/2}^{2x} \Big(\f{x}{|x-y|}\Big)^{\alpha r'}  dy\Big]^{1/r'}\\
		&\lesi \Big[\f{1}{x}\int_{x/2}^{2x}  |f(y)|^rdy\Big]^{1/r}\\
		&\lesi \big[\mathcal M(|f|^r)(x)\big]^{1/r}.		
	\end{aligned}
	\]
	Since $\mathcal M$ is bounded on $L^q$ for any $q>1$, the above inequality implies that $T_\alpha$ is bounded on $L^p(\mathbb R_+)$.
	
	This completes our proof.
\end{proof}

\begin{prop}\label{prop1-boundedness}
Let $\alpha,\beta \in [0,1)$ so that $\alpha + \beta <1$. Let  the operator $\textbf{S}_{\alpha,\beta}$ defined by 
$$
\textbf{S}_{\alpha,\beta}f(x):=\sup_{t>0} \Big|\int_{\mathbb R_+}S_{\alpha,\beta}^t(x,y) |f(y)|dy\Big|,
$$
where
\[
S^t_{\alpha,\beta}(x,y) = \f{1}{\sqrt t}\exp\Big(-\f{|x-y|^2}{ct}\Big)\Big(1+\f{\sqrt t}{x}\Big)^\alpha\Big(1+\f{\sqrt t}{y}\Big)^\beta.
\]
Then $\textbf{S}_{\alpha,\beta}$ is bounded on $L^p(\mathbb R_+)$ for all $\f{1}{1-\beta}<p<\f{1}{\alpha}$.
\end{prop}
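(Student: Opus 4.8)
The plan is to estimate the maximal operator by splitting the $y$-integration into the \emph{local} region $x/2<y<2x$ and the two \emph{far} regions $0<y<x/2$ and $y>2x$, handling the supremum over $t$ in a different way in each. Throughout I set $s=\sqrt t$, so that $S^t_{\alpha,\beta}(x,y)=\f1s\exp\big(-|x-y|^2/(cs^2)\big)\big(1+\tfrac sx\big)^\alpha\big(1+\tfrac sy\big)^\beta$. The crucial structural remark is that in the far regions $|x-y|\sim\max\{x,y\}$ stays away from the diagonal, so the supremum may be passed inside the integral (using only the trivial bound $\sup_t\int\le\int\sup_t$) at no essential loss; by contrast, on the local region passing the supremum inside would produce the non-integrable kernel $\sup_t S^t_{\alpha,\beta}(x,y)\sim 1/|x-y|$, so there the supremum must be kept outside and absorbed into a maximal function. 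In particular, no singular local kernel of the type treated in Lemma~\ref{lem-314} actually arises here.

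For the local part, since $x\sim y$ on $x/2<y<2x$ I may use $\big(1+\tfrac sx\big)^\alpha\big(1+\tfrac sy\big)^\beta\lesi\big(1+\tfrac sx\big)^{\alpha+\beta}$ and pull this $y$-independent factor out of the integral. I would then split the supremum according to $s\le x$ and $s>x$. When $s\le x$ the extra factor is $\sim 1$ and the remaining expression is at most the heat maximal operator applied to $f$, hence $\lesi\mathcal Mf(x)$ by the standard domination of the heat maximal function by the Hardy--Littlewood maximal function (bounded on $L^p$ for $p>1$ by Lemma~\ref{Lem-maximalfunction}). When $s>x$ one has $\big(1+\tfrac sx\big)^{\alpha+\beta}\sim(s/x)^{\alpha+\beta}$ and $\exp\big(-|x-y|^2/(cs^2)\big)\le1$ on the local region, so the term is bounded by $s^{\alpha+\beta-1}x^{-(\alpha+\beta)}\int_{x/2}^{2x}|f|$; since $\alpha+\beta<1$ the supremum over $s>x$ is attained at $s=x$ and equals a constant multiple of the average of $|f|$ over $(x/2,2x)$, again $\lesi\mathcal Mf(x)$. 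Thus the local contribution is bounded on $L^p$ for every $1<p<\infty$ and imposes no restriction on $p$.

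For the two far parts I would pass the supremum inside and compute the time-independent kernel $K(x,y)=\sup_{t>0}S^t_{\alpha,\beta}(x,y)$ explicitly. For $y<x/2$ one has $|x-y|\sim x$, so the Gaussian forces $s\gtrsim x$; in that range the weights behave like $(s/x)^\alpha$ and $(s/y)^\beta$, whence $\f1s(s/x)^\alpha(s/y)^\beta=s^{\alpha+\beta-1}x^{-\alpha}y^{-\beta}$ is decreasing in $s$ and the supremum, attained at $s\sim x$, gives $K(x,y)\sim\f1x\big(\tfrac xy\big)^\beta$. Symmetrically, for $y>2x$ one finds $K(x,y)\sim\f1y\big(\tfrac yx\big)^\alpha$. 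The two resulting operators $f\mapsto x^{\beta-1}\int_0^{x/2}y^{-\beta}|f(y)|\,dy$ and $f\mapsto x^{-\alpha}\int_{2x}^\infty y^{\alpha-1}|f(y)|\,dy$ have nonnegative kernels that are homogeneous of degree $-1$, so their $L^p$-boundedness follows from the classical criterion that such an operator is bounded on $L^p(\mathbb R_+)$ precisely when $\int_0^\infty k(1,y)\,y^{-1/p}\,dy<\infty$ (equivalently a Schur test with power weights, or Hardy's inequality). A direct computation gives $\int_0^{1/2}y^{-\beta-1/p}\,dy<\infty\iff p>\tfrac1{1-\beta}$ for the first operator and $\int_2^\infty y^{\alpha-1-1/p}\,dy<\infty\iff p<\tfrac1\alpha$ for the second. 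Intersecting these with the local range and noting that $\alpha+\beta<1$ guarantees $\tfrac1{1-\beta}<\tfrac1\alpha$, I obtain exactly $\tfrac1{1-\beta}<p<\tfrac1\alpha$.

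The step I expect to be the main obstacle is the explicit evaluation of $K(x,y)=\sup_{t>0}S^t_{\alpha,\beta}(x,y)$ in the far regions, together with the bookkeeping of exponents needed to land on the sharp range: one must verify that the contributions from $s\lesssim\max\{x,y\}$, where the Gaussian decays, and from $s\gtrsim\max\{x,y\}$, where the weights grow but $s^{\alpha+\beta-1}$ decays, are both dominated by the value at $s\sim\max\{x,y\}$ --- and it is precisely here that $\alpha+\beta<1$ enters. The only other delicate point is the one stressed in the first paragraph, namely that the supremum cannot be moved inside the integral on the local region, which is why the local contribution must be routed through $\mathcal M$ rather than through a pointwise kernel bound.
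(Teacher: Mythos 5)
Your proposal is correct, and its skeleton coincides with the paper's proof: the paper's argument also splits into the local region $x/2<y<2x$ and the two far regions, keeps the supremum in $t$ effective only on the local part (which it sends to the Hardy--Littlewood maximal function), and establishes exactly your $t$-uniform far-region kernel bounds $\frac{1}{x}\left(\frac{x}{y}\right)^{\beta}$ for $y<x/2$ and $\frac{1}{y}\left(\frac{y}{x}\right)^{\alpha}$ for $y>2x$ --- this is the pointwise estimate \eqref{eq- bound of S a b t}, proved there by the same subcase analysis in $\sqrt t$ versus $x$ and $y$ that you flag as the main obstacle, with $\alpha+\beta<1$ entering in precisely the way you describe. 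Where you genuinely diverge is in proving $L^p$-boundedness of the two far-region operators. The paper stays inside the maximal-function framework: for the part over $y<x/2$ it applies H\"older's inequality with an exponent $p_1\in\left(\frac{1}{1-\beta},p\right)$ to dominate the operator by $\mathcal{M}_{p_1}f$, and for the part over $y>2x$ it dualizes and dominates the adjoint pairing by $\langle |f|,\mathcal{M}_{p_2}g\rangle$ with $p_2\in\left(\frac{1}{1-\alpha},p'\right)$, invoking Lemma \ref{Lem-maximalfunction} both times. You instead note that both kernels are nonnegative and homogeneous of degree $-1$ and invoke the classical Hardy--Littlewood--P\'olya/Schur criterion, reducing boundedness to convergence of $\int_0^\infty k(1,y)\,y^{-1/p}\,dy$, which yields the endpoints $p>\frac{1}{1-\beta}$ and $p<\frac{1}{\alpha}$ in one line each. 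Your route is slightly more economical (a substitution plus Minkowski's integral inequality, with no duality step and no $\mathcal{M}_r$ machinery) and makes the origin of the sharp range transparent, while the paper's route reuses tools it needs elsewhere; both arguments are complete and give the same range.
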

\begin{proof}
We first claim that 		 
	\begin{equation}\label{eq- bound of S a b t}
	S^t_{\alpha,\beta}(x,y) \lesi \f{1}{\sqrt t}\exp\Big(-\f{|x-y|^2}{ct}\Big)+\f{1}{x} \chi_{\{y/2< x<2y\}}+ \f{1}{x}\Big(\f{x}{y}\Big)^{\beta}\chi_{\{2y\le  x\}} +\f{1}{y}\Big(\f{y}{x}\Big)^{\alpha}\chi_{\{x\le y/2\}}
	\end{equation}
	for  $t>0$ and $x,y>0$.
	
\textbf{Case 1: $y/2<x<2y$} 

If $x>\sqrt t$, then we have
\[
1+\f{\sqrt t}{x} \sim 1+\f{\sqrt t}{y}  \sim 1.
\]
Hence,
\[
S^t_{\alpha,\beta}(x,y) \sim \f{1}{\sqrt t}\exp\Big(-\f{|x-y|^2}{ct}\Big).
\]
If $x\le \sqrt t$, then in this case  
\[
1+\f{\sqrt t}{x} \sim 1+\f{\sqrt t}{y} \sim \f{\sqrt t}{x}.
\]
Hence,
\[
\begin{aligned}
	S^t_{\alpha,\beta}(x,y)&\sim \f{1}{\sqrt t}\exp\Big(-\f{|x-y|^2}{ct}\Big) \Big(\f{\sqrt t}{x}\Big)^{\alpha+\beta} \\
	&\lesi \f{1}{\sqrt t}  \Big(\f{\sqrt t}{x}\Big)^{\alpha+\beta} \\
	&\lesi \f{1}{x}.
\end{aligned}
\]
Consequently,
\[
S^t_{\alpha,\beta}(x,y)\chi_{\{y/2<x<2y\}}\lesi \Big[\f{1}{\sqrt t}\exp\Big(-\f{|x-y|^2}{ct}\Big)+\f{1}{x}\Big]\chi_{\{y/2<x<2y\}}
\]

\medskip

\textbf{Case 2: $x\ge 2y$.} In this case $|x-y|\sim x$ 

If $\sqrt t <2y\le x $, then 
\[
S^t_{\alpha,\beta}(x,y) \sim \f{1}{\sqrt t}\exp\Big(-\f{x^2}{ct}\Big)\lesi  \f{1}{x}.
\]
If $2y\le \sqrt t\le x$, then
\[
\begin{aligned}
	S^t_{\alpha,\beta}(x,y) &\sim \f{1}{\sqrt t}\exp\Big(-\f{x^2}{ct}\Big) \Big(\f{\sqrt t}{y}\Big)^\beta\\
	&\lesi \f{1}{x}\Big(\f{x}{y}\Big)^\beta.
\end{aligned}
\]
If $2y \le x<\sqrt t$, then
\[
\begin{aligned}
	S^t_{\alpha,\beta}(x,y) &\sim \f{1}{\sqrt t}  \Big(\f{\sqrt t}{x}\Big)^\alpha\Big(\f{\sqrt t}{y}\Big)^\beta\\
	&\lesi \f{1}{x}\Big(\f{x}{y}\Big)^\beta.
\end{aligned}
\]
Consequently,
\[
S^t_{\alpha,\beta}(x,y)\chi_{\{x\ge 2y\}} \lesi \f{1}{x}\Big(\f{x}{y}\Big)^\beta \chi_{\{x\ge 2y\}}.
\]

\medskip

\textbf{Case 3: $x\le  y/2$.} In this case, $y\ge 2x$ and  $|x-y|\sim y$. 

If $\sqrt t <2x\le y $, then 
\[
S^t_{\alpha,\beta}(x,y) \sim \f{1}{\sqrt t}\exp\Big(-\f{y^2}{ct}\Big)\lesi  \f{1}{y}.
\]
If $2x\le \sqrt t\le y$, then
\[
\begin{aligned}
	S^t_{\alpha,\beta}(x,y) &\sim \f{1}{\sqrt t}\exp\Big(-\f{y^2}{ct}\Big) \Big(\f{\sqrt t}{x}\Big)^\alpha\\
	&\lesi \f{1}{y}\Big(\f{y}{x}\Big)^\alpha.
\end{aligned}
\]
If $2x \le y<\sqrt t$, then
\[
\begin{aligned}
	S^t_{\alpha,\beta}(x,y) &\sim \f{1}{\sqrt t}  \Big(\f{\sqrt t}{y}\Big)^\alpha\Big(\f{\sqrt t}{y}\Big)^\beta\\
	&\lesi \f{1}{y}\Big(\f{y}{x}\Big)^\alpha.
\end{aligned}
\]
Consequently,
\[
S^t_{\alpha,\beta}(x,y)\chi_{\{x\le y/2\}} \lesi \f{1}{y}\Big(\f{y}{x}\Big)^\alpha \chi_{\{x\le y/2\}}.
\]
These estimates confirm \eqref{eq- bound of S a b t}.

We now turn to prove the boundedness of $\textbf{S}_{\alpha,\beta}$. Fix $\f{1}{1-\beta}<p<\f{1}{\alpha}$. Due to \eqref{eq- bound of S a b t},
\[
\begin{aligned}
	\textbf{S}_{\alpha,\beta}f(x) &\lesi \sup_{t>0} \int_{\mathbb R_+}\f{1}{\sqrt t}\exp\Big(-\f{|x-y|^2}{ct}\Big)|f(y)|dy +\f{1}{x}\int_{x/2}^{2x}|f(y)|dy \\
	& \ \ \ + \f{1}{x}\int_{0}^{x/2}\Big(\f{x}{y}\Big)^{\beta}|f(y)|dy + \int_{2x}^\vc \f{1}{y}\Big(\f{y}{x}\Big)^{\alpha}|f(y)|dy\\
	&=:\textbf{S}^1_{\alpha,\beta}f(x) +\textbf{S}^2_{\alpha,\beta}f(x) +\textbf{S}^3_{\alpha,\beta}f(x) +\textbf{S}^4_{\alpha,\beta}f(x).
\end{aligned}
\]
The standard argument shows that
\[
\textbf{S}^1_{\alpha,\beta}f(x)\lesi \mathcal Mf(x),
\] 
where we recall that $\mathcal M$ is the Hardy-Littlewood maximal function on $\mathbb R_+$.

Obviously,
\[
\textbf{S}^2_{\alpha,\beta}f(x)\lesi \mathcal Mf(x).
\]
Fix $p_1$ such that $\f{1}{1-\beta}<p_1<p<\f{1}{\alpha}$, which implies $\beta p_1'<1$. By the H\"older inequality and the fact $\beta p_1'<1$,
\[
\begin{aligned}
	\textbf{S}^3_{\alpha,\beta}f(x)&\lesi \Big(\f{1}{x} \int_0^{2x}|f(y)|^{p_1}dy\Big)^{1/p_1}\Big(\f{1}{x} \int_0^{2x} \Big(\f{x}{y}\Big)^{\beta p_1'} dy\Big)^{1/p_1'}\\
	&\lesi \Big(\f{1}{x} \int_0^{2x}|f(y)|^{p_1}dy\Big)^{1/p_1}\\
	&\lesi  \mathcal M_{p_1}f(x).
\end{aligned}
\]
These three estimates and Lemma \ref{Lem-maximalfunction} imply that $\textbf{S}^1_{\alpha,\beta}, \textbf{S}^2_{\alpha,\beta}$ and $\textbf{S}^3_{\alpha,\beta}$ are bounded on $L^p(\mathbb R_+)$.

It remains to show that $\textbf{S}^4_{\alpha,\beta}$ are bounded on $L^p(\mathbb R_+)$. Fix $p_2$  such that $\f{1}{1-\alpha}<p_2<p'<\f{1}{\beta}$. For $g\in L^{p'}(\mathbb R_+)$, we have
\[
\begin{aligned}
	\langle \textbf{S}^4_{\alpha,\beta}f, g\rangle &\le \int_0^\vc |g(x)|\int_{2x}^\vc \f{1}{y}\Big(\f{y}{x}\Big)^{\alpha}|f(y)|dydx\\
	&\le \int_0^\vc |f(y)|\int_{0}^{y/2} \f{1}{y} |g(x)| \Big(\f{y}{x}\Big)^{\alpha}dxdy.
\end{aligned}
\]
Similarly to $\textbf{S}^3_{\alpha,\beta}$, we have
\[
\int_{0}^{y/2} \f{1}{y} |g(x)| \Big(\f{y}{x}\Big)^{\alpha}dx\lesi \mathcal M_{p_2}f(x),
\]
which implies
\[
\langle \textbf{S}^4_{\alpha,\beta}f, g\rangle \lesi \int_0^\vc |f(y)|]\mathcal M_{p_2}f(y) dy =\langle f, \mathcal M_{p_2}f \rangle.
\]
This, together with Lemma \ref{Lem-maximalfunction}, implies that the operator $\textbf{S}^4_{\alpha,\beta}$ is bounded on $L^p(\mathbb R_+)$.

This completes our proof.
\end{proof}
\begin{prop}\label{prop2 - boundedness}
	Let $\nu>-1$ and $k\in \mathbb N$. Then for any $\epsilon>0$, we have
	\[
	\begin{aligned}
		\int_0^\vc \f{1}{x} &\exp\Big(-\f{|x-y|^2}{ct}\Big)\Big(1+\f{\sqrt t}{x}\Big)^{-(\nu+3/2)}\Big(1+\f{\sqrt t}{y}\Big)^{-(\nu+3/2)}\f{dt}{t}\\
	&\lesi \Big[\f{1}{x} + \f{1}{x}\Big(\f{x}{|x-y|}\Big)^{\epsilon}\Big]\chi_{\{y/2<x<2y\}} + \f{1}{x}\chi_{\{x\ge 2y\}} + \f{1}{y}\Big(\f{y}{x}\Big)^{-\nu-1/2}\chi_{\{y\ge 2x\}}.
		\end{aligned}\]
	Consequently, the operator
	\[
	f\mapsto \int_0^\vc\int_0^\vc \f{1}{x} \exp\Big(-\f{|x-y|^2}{ct}\Big)\Big(1+\f{\sqrt t}{x}\Big)^{-(\nu+3/2)}\Big(1+\f{\sqrt t}{y}\Big)^{-(\nu+3/2)}|f(y)|\f{dt}{t}dy
	\]
	is bounded on $L^p(\mathbb R_+)$ for $1<p<\f{1}{(-\nu-1/2)\vee 0}$ with convention $\f{1}{0}=\vc$.
\end{prop}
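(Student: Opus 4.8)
The plan is to first collapse the $t$-integral into a one-dimensional elementary integral and estimate it by a case analysis, then decompose the resulting majorant into four model operators each already controlled by a result in Sections 2--3. Write $a=\nu+3/2$ (so $a>1/2$) and $d=|x-y|$. Substituting $s=\sqrt t$, so that $\f{dt}{t}=2\f{ds}{s}$, the left-hand side equals $\f{2}{x}J(x,y)$ where
\[
J(x,y)=\int_0^\vc \exp\Big(-\f{d^2}{cs^2}\Big)\Big(1+\f{s}{x}\Big)^{-a}\Big(1+\f{s}{y}\Big)^{-a}\f{ds}{s}.
\]
I would estimate $J$ in the three regimes $y/2<x<2y$, $x\ge 2y$ and $y\ge 2x$, in each case splitting the $s$-integral at the break points $s=x$, $s=y$, $s=d$ where the profile factors $1+s/x$ and $1+s/y$ switch between being $\sim 1$ and being a pure power. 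On each sub-interval one factor is constant and the other a power, leaving an incomplete Gaussian integral that is dispatched by the substitution $v=d/s$, using $a>1/2$ to guarantee convergence of the tails.

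The only genuine singularity occurs in the near-diagonal regime $y/2<x<2y$, where $x\sim y\sim m$ and $d<m$. There the two profiles combine to $(1+s/m)^{-2a}$; the tail $s\ge m$ contributes $O(1)$ because $2a>1$, while the window $d\le s\le m$ yields $\int_d^m \f{ds}{s}\sim\log(m/d)$. Converting this with the elementary inequality $\log u\lesi_\epsilon u^\epsilon$ for $u\ge1$ turns $\log(m/d)$ into $(x/|x-y|)^\epsilon$, which is exactly the first bracket on the right and explains the free parameter $\epsilon>0$. In the regime $x\ge 2y$ one has $d\sim x$ and $J\lesi (y/x)^a\le 1$, giving the term $\f1x$; in the regime $y\ge 2x$ one has $d\sim y$ and $J\sim(x/y)^a$, and since $\f1x(x/y)^a=\f1y(x/y)^{\nu+1/2}$ this reproduces the last term. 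I expect the only real care to be needed in tracking the near-diagonal logarithm, which is the main (though mild) obstacle.

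For the boundedness statement I would split the majorant into the four pieces $K_1=\f1x\chi_{\{y/2<x<2y\}}$, $K_2=\f1x(x/|x-y|)^\epsilon\chi_{\{y/2<x<2y\}}$, $K_3=\f1x\chi_{\{x\ge 2y\}}$ and $K_4=\f1y(y/x)^{-\nu-1/2}\chi_{\{y\ge 2x\}}$, and treat each separately. The piece $K_1$ gives $\f1x\int_{x/2}^{2x}|f|\lesi\mathcal Mf$, and $K_3$ gives $\f1x\int_0^{x/2}|f|\lesi\mathcal Mf$; both are bounded on $L^p$ for every $p>1$ by Lemma \ref{Lem-maximalfunction}. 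The piece $K_2$ is precisely the operator $T_\epsilon$ of Lemma \ref{lem-314} (note $\{y/2<x<2y\}=\{x/2<y<2x\}$), bounded on $L^p$ for $\f{1}{1-\epsilon}<p<\vc$; since the pointwise bound holds for every $\epsilon>0$, given a target $p>1$ I first fix $\epsilon<1-1/p$, so that $\f{1}{1-\epsilon}<p$.

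The only piece constraining the range is $K_4$, i.e. $f\mapsto \int_{2x}^\vc \f1y (y/x)^{-\nu-1/2}|f(y)|\,dy$, which is the operator $\textbf S^4_{\alpha,\beta}$ appearing inside the proof of Proposition \ref{prop1-boundedness} with $\alpha=-\nu-1/2$. Repeating that duality argument---testing against $g\in L^{p'}$, bounding $\int_0^{y/2}\f1y (y/x)^{\alpha}|g(x)|\,dx\lesi \mathcal M_{p_2}g(y)$ by H\"older with an exponent $p_2$ satisfying $\f{1}{1-(\alpha\vee0)}<p_2<p'$ (which forces $\alpha p_2'<1$), and invoking Lemma \ref{Lem-maximalfunction}---yields boundedness exactly for $1<p<\f{1}{\alpha\vee 0}=\f{1}{(-\nu-1/2)\vee 0}$. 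When $\nu\ge-1/2$ the kernel is dominated by the dual Hardy operator $f\mapsto\int_x^\vc |f(y)|/y\,dy$, bounded on all $L^p$ with $p>1$, consistent with $\f{1}{0}=\vc$. Collecting the four contributions gives the asserted range; the index $k$ in the statement plays no role in the estimate.
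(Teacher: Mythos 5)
Your proposal is correct and follows essentially the same route as the paper: the pointwise bound is proved there by the same three-case analysis with a splitting of the time integral (at $t=x^2$ or $t=y^2$, using $e^{-z}\lesssim z^{-\epsilon/2}$ to produce the $(x/|x-y|)^\epsilon$ term, of which your $s=\sqrt t$ substitution and $\log(m/d)\lesssim_\epsilon (m/d)^\epsilon$ step are cosmetic variants). For the $L^p$ statement the paper explicitly omits the details, saying they follow as in Lemma \ref{lem-314} and Proposition \ref{prop1-boundedness}, and your four-kernel decomposition---maximal function for the near-diagonal and $x\ge 2y$ pieces, Lemma \ref{lem-314} for the $\epsilon$-piece, and the duality argument for the $\mathbf{S}^4_{\alpha,\beta}$-type piece giving the constraint $p<\f{1}{(-\nu-1/2)\vee 0}$---is precisely that intended argument.
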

\begin{proof}
	Once the inequality has been obtained, the boundedness of the operator  
	\[
	f\mapsto \int_0^\vc\int_0^\vc \f{1}{x} \exp\Big(-\f{|x-y|^2}{ct}\Big)\Big(1+\f{\sqrt t}{x}\Big)^{-(\nu+3/2)}\Big(1+\f{\sqrt t}{y}\Big)^{-(\nu+3/2)}|f(y)|\f{dt}{t}dy
	\]
	can be done similarly to the proof of Proposition \ref{prop1-boundedness} and Lemma \ref{lem-314}; hence, we omit the details. For this reason, we need only to prove the  inequality. Define
	\[
	F(x,y)=\int_0^\vc \f{1}{x} \exp\Big(-\f{|x-y|^2}{ct}\Big)\Big(1+\f{\sqrt t}{x}\Big)^{-(\nu+3/2)}\Big(1+\f{\sqrt t}{y}\Big)^{-(\nu+3/2)}\f{dt}{t}.
	\]
We have the following three cases.

	\textbf{Case 1: $y/2<x<2y$} 
	
We have
	\[
	\begin{aligned}
		F(x,y) &\lesi \int_0^{x^2}\f{1}{x} \exp\Big(-\f{|x-y|^2}{ct}\Big)  \f{dt}{t}+\int_{x^2}^\vc \f{1}{x} \Big(\f{\sqrt t}{x}\Big)^{-(\nu+3/2)}\Big(\f{\sqrt t}{y}\Big)^{-(\nu+3/2)}\f{dt}{t}\\
		&\lesi \f{1}{x}\Big(\f{x}{|x-y|}\Big)^{\epsilon}+\f{1}{x}.
	\end{aligned}
	\]
	
	\textbf{Case 2: $x\ge 2y$}
	
	In this case, $|x-y|\sim x$. Hence,
	\[
	\begin{aligned}
		F(x,y)&\lesi \int_0^{x^2}\f{1}{x} \exp\Big(-\f{x^2}{ct}\Big)  \f{dt}{t}+\int_{x^2}^\vc \f{1}{x} \Big(\f{\sqrt t}{x}\Big)^{-(\nu+3/2)} \f{dt}{t}\\
		&\lesi \f{1}{x}.
	\end{aligned}
	\]
	
	\textbf{Case 3: $x\le y/2$, i.e., $ y\ge 2x$}
	
	In this case, $|x-y|\sim x$. It follows that 
	\[
	\begin{aligned}
		F(x,y)&\lesi \int_0^{y^2}\f{1}{x} \exp\Big(-\f{x^2}{ct}\Big) \Big(\f{\sqrt t}{x}\Big)^{-(\nu+3/2)} \f{dt}{t}+\int_{y^2}^\vc \f{1}{x} \Big(\f{\sqrt t}{x}\Big)^{-(\nu+3/2)} \f{dt}{t}\\
		&\lesi \f{1}{x}\Big(\f{y}{x}\Big)^{-(\nu+3/2)}=\f{1}{y}\Big(\f{y}{x}\Big)^{-(\nu+1/2)}.
	\end{aligned}
	\]

	This completes our proof.
\end{proof}

Let $\sigma, \beta\in [0,1/2)$. For each $t>0$, define
\begin{equation}\label{eq-Tt}
	T_{t,\beta,\sigma}f(x)=\int_{\Rn_+}T_{t,\beta,\sigma}(x,y)f(y)dy,
\end{equation}
where
\begin{equation}\label{eq-Tt kernel}
	T_{t,\beta,\sigma}(x,y)= \f{1}{t^{n/2}}\exp\Big(-\f{|x-y|^2}{ct}\Big)\prod_{j=1}^n\Big(1+\f{\sqrt t}{x_j}\Big)^\beta\Big(1+\f{\sqrt t}{y_j}\Big)^\sigma.
\end{equation}

The following lemma is taken from \cite[Lemma 3.7]{B2}.
\begin{lem}\label{lem Lpq}
	Let $\sigma\in (0,1/2)$ and $\beta>0$ such that $\sigma+\beta<1$. Let $T_{t,\beta,\sigma}$ be the linear operator defined by \eqref{eq-Tt} and \eqref{eq-Tt kernel}. Then for any $\f{1}{1-\sigma}<p\le q<\f{1}{\beta}$ and any ball $B\subset \mathbb R^n_+$, we have
	\begin{equation}
		\label{eq- eq1 Tt off diagonal}
		\|T_{t,\beta,\sigma}\|_{p\to q}\lesi t^{-\f{n}{2}(\f{1}{p}-\f{1}{q})},
	\end{equation}
	and
	\begin{equation}
		\label{eq- eq2 Tt off diagonal}
		\|T_{t,\beta,\sigma}\|_{L^p(B)\to L^q(S_j(B))}+\|T_{t,\beta,\sigma}\|_{L^p(S_j(B))\to L^q(B)}\lesi t^{-\f{n}{2}(\f{1}{p}-\f{1}{q})}\exp\Big(-\f{(2^jr_B)^2}{ct}\Big), \ \ j\ge 2,
\end{equation}
where recall that $S_j(B)=2^{j}B\backslash 2^{j-1}(B)$ for $j\ge 1$.
\end{lem}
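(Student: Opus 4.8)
The plan is to decouple the $n$ variables and reduce everything to a single one-dimensional $L^p\to L^q$ estimate. The first observation is that, since $|x-y|^2=\sum_j|x_j-y_j|^2$, the kernel factorises,
\[
T_{t,\beta,\sigma}(x,y)=\prod_{j=1}^n k_t(x_j,y_j),\qquad k_t(u,v):=\f{1}{\sqrt t}\exp\Big(-\f{|u-v|^2}{ct}\Big)\Big(1+\f{\sqrt t}{u}\Big)^\beta\Big(1+\f{\sqrt t}{v}\Big)^\sigma,
\]
so that $T_{t,\beta,\sigma}$ is the $n$-fold tensor product of the one-dimensional operator $T^{(1)}_t$ with kernel $k_t$. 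I would first prove the one-dimensional bound $\|T^{(1)}_t\|_{L^p(\mathbb R_+)\to L^q(\mathbb R_+)}\lesi t^{-\f12(\f1p-\f1q)}$ for $\f{1}{1-\sigma}<p\le q<\f1\beta$, and then recover \eqref{eq- eq1 Tt off diagonal} by tensorisation: applying $T^{(1)}_t$ one variable at a time and using Minkowski's integral inequality (legitimate precisely because $q/p\ge1$) yields $\|T_{t,\beta,\sigma}\|_{p\to q}\le\|T^{(1)}_t\|_{p\to q}^{\,n}$, which is the desired power of $t$.

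For the one-dimensional estimate I would first remove the $t$-dependence by the dilation $u\mapsto\sqrt t\,u$: a change of variables shows $\|T^{(1)}_t\|_{p\to q}=t^{-\f12(\f1p-\f1q)}\|T^{(1)}_1\|_{p\to q}$, so it suffices to treat $t=1$, i.e. to bound the operator with kernel $k_1(u,v)=\exp(-|u-v|^2/c)(1+1/u)^\beta(1+1/v)^\sigma$. Here I would split $\mathbb R_+\times\mathbb R_+$ according to whether each of $u,v$ is $<1$ or $\ge1$, since the weights are comparable to $1$ away from the origin and blow up like $u^{-\beta}$, $v^{-\sigma}$ near it. On $\{u\ge1,\,v\ge1\}$ the kernel is dominated by the Gaussian $\exp(-|u-v|^2/c)$ and Young's inequality gives the $L^p\to L^q$ bound. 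On $\{u<1,\,v<1\}$ I would discard the Gaussian and dominate the operator by the rank-one operator $f\mapsto u^{-\beta}\int_0^1 v^{-\sigma}|f(v)|\,dv$, bounded from $L^p(0,1)$ to $L^q(0,1)$ because $u^{-\beta}\in L^q(0,1)$ (as $\beta q<1$) and $v^{-\sigma}\in L^{p'}(0,1)$ (as $\sigma p'<1$). In the two mixed regions one variable is $\ge1$ while the other is $<1$, so $|u-v|$ is bounded below and the Gaussian supplies genuine decay in the far variable; a single application of Hölder's inequality in the near variable (again using $\sigma p'<1$, i.e. $p>\f{1}{1-\sigma}$) together with $u^{-\beta}\in L^q(0,1)$ (i.e. $q<\f1\beta$) closes these cases. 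Collecting the four regions gives $\|T^{(1)}_1\|_{p\to q}\lesi1$.

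Finally, the off-diagonal bound \eqref{eq- eq2 Tt off diagonal} follows from \eqref{eq- eq1 Tt off diagonal} by peeling off a Gaussian factor. If $x\in S_j(B)$ and $y\in B$ with $j\ge2$, then $|x-y|\gtrsim 2^jr_B$, so
\[
\exp\Big(-\f{|x-y|^2}{ct}\Big)\le\exp\Big(-\f{(2^jr_B)^2}{2ct}\Big)\exp\Big(-\f{|x-y|^2}{2ct}\Big);
\]
extracting the constant factor $\exp(-(2^jr_B)^2/2ct)$ and applying \eqref{eq- eq1 Tt off diagonal} to the kernel with $c$ replaced by $2c$ gives the claimed bound, and the symmetric roles of $x$ and $y$ handle $\|T_{t,\beta,\sigma}\|_{L^p(S_j(B))\to L^q(B)}$ identically.

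The main obstacle is the genuine gain in integrability, namely the case $p<q$. The crude pointwise bound \eqref{eq- bound of S a b t} is \emph{not} enough here: it reduces $T^{(1)}_1$ to Hardy-type operators with kernels homogeneous of degree $-1$, which are dilation-invariant and hence bounded only from $L^p$ to $L^p$, never to $L^q$ with $q>p$. The resolution, carried out above, is to retain the Gaussian wherever the two variables separate and to exploit the \emph{integrability} of the singular weights on the bounded region $\{u,v<1\}$; both are available exactly because the hypotheses $p>\f{1}{1-\sigma}$ and $q<\f1\beta$ force the strict gap $\f1p-\f1q<1-\sigma-\beta$ through the conditions $\sigma p'<1$ and $\beta q<1$.
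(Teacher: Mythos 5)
Your proof is correct, and it is necessarily a different route from the paper's, because the paper gives no proof at all: it simply imports the lemma as \cite[Lemma 3.7]{B2}. Your argument is self-contained and exploits the tensor-product structure: factor the kernel into one-dimensional pieces, reduce to $t=1$ by the dilation $u\mapsto\sqrt t\,u$ (legitimate because the weights $(1+\sqrt t/u)^\beta$ are exactly scale-invariant under this substitution), prove the one-dimensional $L^p\to L^q$ bound by splitting into the four regions determined by $u\lessgtr 1$, $v\lessgtr 1$ (Young's inequality where both weights are bounded, rank-one/H\"older bounds where the singular weights $u^{-\beta}\in L^q(0,1)$ and $v^{-\sigma}\in L^{p'}(0,1)$ appear, i.e.\ exactly the hypotheses $\beta q<1$ and $\sigma p'<1$), and then recombine the variables via Minkowski's integral inequality, which requires precisely $q\ge p$. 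Your closing observation is also a genuine insight rather than padding: the pointwise domination \eqref{eq- bound of S a b t} used in the paper's Proposition \ref{prop1-boundedness} reduces matters to Hardy-type kernels homogeneous of degree $-1$, which commute with dilations and therefore can never produce the $L^p\to L^q$ gain with $q>p$; this explains why the off-diagonal lemma needs an argument of a different nature from the maximal-function estimates proved in the paper. Two cosmetic points: in the mixed regions the claim that $|u-v|$ is bounded below is not literally true (take $u\downarrow 1$, $v\uparrow 1$), but what your argument actually uses — the bound $\exp(-|u-v|^2/c)\le\exp(-(u-1)^2/c)$ for $u\ge1>v$, giving an $L^q$ function of the far variable — is valid, so nothing breaks; and in the off-diagonal step the separation is $|x-y|\ge(2^{j-1}-1)r_B\gtrsim 2^jr_B$, so the extracted factor is $\exp(-(2^jr_B)^2/(c't))$ with a larger constant $c'$, which is harmless since $c$ in \eqref{eq- eq2 Tt off diagonal} is generic.
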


\section{Weighted estimates for the higher-order Riesz transforms}
This section is dedicated to proving Theorem \ref{mainthm-genral case}. The proof is quite long and complicated and will be done through several steps. We first prove a special case of Theorem \ref{mainthm-genral case}.  


\begin{thm}\label{mainthm-all indices are 0 or 1}
	Let $\nu\in (-1,\vc)^n$ and $a\ge 0$. Suppose that $k=(k_1,\ldots, k_n)\in \mathbb N^n$ such that $k_1, \ldots, k_n \in \{0,1\}$. Then the Riesz transform $\delta_\nu^k (\mathcal L_\nu+a)^{-|k|/2}$ is bounded on $L^p_w(\Rn_+)$ for all $\f{1}{1-\gamma_\nu}<p<\f{1}{\gamma_{\nu+\sigma(k)}}$ and $w\in A_{p(1-\gamma_\nu)}\cap RH_{(\f{1}{p\gamma_{\nu+\sigma(k)}})'}$.
\end{thm}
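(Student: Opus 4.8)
The plan is to derive the weighted bounds from the criterion of Theorem~\ref{BZ-thm}, applied with the exponents $p_0=\f{1}{1-\gamma_\nu}$ and $q_0$ just below $\f{1}{\gamma_{\nu+k}}$. Since every $k_j\in\{0,1\}$ we have $\sigma(k)=k$, hence $\gamma_{\nu+\sigma(k)}=\gamma_{\nu+k}=\max\{\gamma_{\nu_j}:k_j=0\}$, and with this choice the target weight class $A_{p(1-\gamma_\nu)}\cap RH_{(\f{1}{p\gamma_{\nu+k}})'}$ is exactly $A_{p/p_0}\cap RH_{(q_0/p)'}$ in the limit $q_0\uparrow\f{1}{\gamma_{\nu+k}}$.

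Write $T=\delta_\nu^k(\mathcal L_\nu+a)^{-|k|/2}$. First I would record, by subordination,
\[
K(x,y)=\f{1}{\Gamma(|k|/2)}\int_0^\vc e^{-ta}\,\delta_\nu^k p_t^\nu(x,y)\,t^{|k|/2-1}\,dt,
\]
and bound $|\delta_\nu^k p_t^\nu|$ by Theorem~\ref{thm1- kernel est  n ge 2}; the crucial feature is that, because each $k_j\in\{0,1\}$, the dangerous $x$-weights $(1+\sqrt t/x_j)^{\gamma_{\nu_j}}$ survive only for the indices with $k_j=0$, so the $x$-side exponent is $\gamma_{\nu+k}$ while the $y$-side exponent is $\gamma_\nu$. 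Integrating in $t$ and decomposing the resulting kernel into a Hardy--Littlewood maximal part and Hardy-type operators, exactly as in the proofs of Propositions~\ref{prop1-boundedness} and~\ref{prop2 - boundedness}, gives $\|T\|_{L^p\to L^p}<\vc$ for all $p\in(\f{1}{1-\gamma_\nu},\f{1}{\gamma_{\nu+k}})$. In particular $T$ is bounded on $L^{q_0}$, which is the standing hypothesis of Theorem~\ref{BZ-thm}.

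The decisive choice is the approximating family. The symmetric semigroup $e^{-t\mathcal L_\nu}$ is bounded only on $(\f{1}{1-\gamma_\nu},\f{1}{\gamma_\nu})$, so it cannot provide the improving estimate \eqref{eq1-BZ-bis} up to $q_0=\f{1}{\gamma_{\nu+k}}$ when $\gamma_{\nu+k}<\gamma_\nu$. Instead I would take $\mathcal A_t=I-(I-e^{-t^2\mathcal L_{\nu+k}})^M$ with $M$ large, built from the \emph{shifted} operator $\mathcal L_{\nu+k}$, whose heat kernel carries the smaller exponents $\gamma_{(\nu+k)_j}\le\gamma_{\nu+k}$. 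Then $\mathcal A_{r_B}$ is a finite linear combination of the operators $e^{-mr_B^2\mathcal L_{\nu+k}}$, and \eqref{eq1-BZ-bis} follows from the off-diagonal $L^{p_0}\to L^{q_0}$ estimate \eqref{eq- eq2 Tt off diagonal} of Lemma~\ref{lem Lpq} (with $\beta=\sigma=\gamma_{\nu+k}$), which is valid precisely for $\f{1}{1-\gamma_{\nu+k}}<p_0\le q_0<\f{1}{\gamma_{\nu+k}}$; the inequality $\gamma_{\nu+k}\le\gamma_\nu$ places $p_0=\f{1}{1-\gamma_\nu}$ in the admissible range, and the Gaussian factor $\exp(-c4^j)$ furnishes a rapidly decaying $\alpha(j)$.

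For the cancellation estimate \eqref{eq1-BZ} I would use the intertwining identity $\delta_\nu^k(\mathcal L_\nu+a)^{-|k|/2}=(\mathcal L_{\nu+k}+2|k|+a)^{-|k|/2}\delta_\nu^k$, verified on the eigenbasis through \eqref{eq-eignevalue eigenvector} and \eqref{eq- delta and eigenvector}, which links $T$ to the functional calculus of $\mathcal L_{\nu+k}$ underlying $\mathcal A_t$. Expanding $T(I-\mathcal A_{r_B})=\sum_{m=0}^M\binom Mm(-1)^m Te^{-mr_B^2\mathcal L_{\nu+k}}$ and estimating the composition kernels by the refined bounds of Theorems~\ref{thm1- kernel est  n ge 2} and \ref{thm1s- kernel est  n ge 2}--\ref{thm3- kernel est  n ge 2} together with Lemma~\ref{lem-elementary lemma}, the $M$-fold cancellation of $(I-e^{-r_B^2\mathcal L_{\nu+k}})^M$ turns the regularity of the heat kernel into off-diagonal decay of high order in $2^{-j}$, giving \eqref{eq1-BZ} with $\sum_j\alpha(j)2^{jn}<\vc$. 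Reconciling the mismatch that $T$ is generated by $\mathcal L_\nu$ whereas $\mathcal A_t$ is generated by $\mathcal L_{\nu+k}$, and extracting genuine decay from the cancellation in this asymmetric setting, is the principal technical obstacle. Once both conditions hold, Theorem~\ref{BZ-thm} gives $T$ bounded on $L^p_w$ for $p\in(p_0,q_0)$ and $w\in A_{p(1-\gamma_\nu)}\cap RH_{(q_0/p)'}$; for a fixed admissible $p$ and $w$, the reverse-H\"older self-improvement of Lemma~\ref{weightedlemma1}(v) lets me choose $q_0<\f{1}{\gamma_{\nu+k}}$ close enough to the endpoint that $w\in RH_{(q_0/p)'}$ (and, if necessary, the self-improvement in Lemma~\ref{weightedlemma1}(iv) to move $p_0$ slightly off its endpoint), recovering the full range and weight class of the statement; when $\gamma_{\nu+k}=0$ the reverse-H\"older factor is vacuous and $q_0$ may be taken arbitrarily large.
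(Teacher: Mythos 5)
Your proposal breaks down at its first and most essential step: the claim that the unweighted bound $\|T\|_{L^p\to L^p}<\vc$ on the whole range $(\f{1}{1-\gamma_\nu},\f{1}{\gamma_{\nu+k}})$ follows by taking absolute values inside the subordination integral and treating the resulting positive kernel ``exactly as in Propositions \ref{prop1-boundedness} and \ref{prop2 - boundedness}''. All the weight factors in Theorem \ref{thm1- kernel est  n ge 2} are $\ge 1$, so the majorant kernel you produce is bounded \emph{below} by
\[
\int_0^\vc t^{|k|/2}\,\f{1}{t^{(n+|k|)/2}}\exp\Big(-\f{|x-y|^2}{ct}\Big)\,\f{dt}{t}\;=\;c_n\,\f{1}{|x-y|^{n}},
\]
and an integral operator with positive kernel $\ge c\,|x-y|^{-n}\chi_{\{|x-y|\le 1\}}$ is bounded on no $L^p$ whatsoever (it sends the indicator of a ball to $+\vc$ at every point of that ball). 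No decomposition into maximal and Hardy-type pieces can rescue this: the two propositions you invoke treat genuinely different objects, namely a maximal operator in Proposition \ref{prop1-boundedness} (a supremum over $t$, so only a single sub-critical Gaussian kernel for each fixed $t$) and, in Proposition \ref{prop2 - boundedness}, a kernel with the prefactor $\f{1}{x}$ and weights with \emph{negative} exponents $-(\nu+3/2)$, whose $t$-integral is only sub-critically singular on the diagonal. The $L^p$-boundedness of the Riesz transform depends on cancellation in $t$, which is destroyed the moment you pass to $|\delta_\nu^k p_t^\nu|$.

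This gap is not a repairable technicality inside your scheme, because Theorem \ref{BZ-thm} takes $L^{q_0}$-boundedness of $T$ as a standing hypothesis and you need $q_0$ arbitrarily close to $\f{1}{\gamma_{\nu+k}}$ — that is, you are assuming (the unweighted case of) the very statement to be proved. The paper escapes the circle in three moves: $L^2$-boundedness comes for free from the eigenfunction relations \eqref{eq- delta and eigenvector} and Bessel's inequality; Theorem \ref{BZ-thm} with $q_0=2$ and $\mathcal A_t$ built from $\mathcal L_\nu$ itself then gives $\f{1}{1-\gamma_\nu}<p\le 2$; and the range $2<p<\f{1}{\gamma_{\nu+\sigma(k)}}$, as well as the weighted estimates, are obtained by \emph{duality}, applying Theorem \ref{BZ-thm} to the adjoint $\mathcal L_\nu^{-|k|/2}(\delta_\nu^*)^k$ with the approximating family generated by the shifted operator $\mathcal L_{\nu+\sigma(k)}+2|k|$. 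Duality also dissolves the obstacle you flag but leave open: in the dual setting the intertwining reads $(\delta_\nu^*)^k\,g(\mathcal L_{\nu+\sigma(k)}+2|k|)=g(\mathcal L_\nu)\,(\delta_\nu^*)^k$, so the composition $T^*(I-e^{-r_B^2(\mathcal L_{\nu+\sigma(k)}+2|k|)})^m$ collapses into $(\delta_\nu^*)^k$ applied to functions of a \emph{single} operator, and the kernel bounds of Theorem \ref{thm2- kernel est  n ge 2} apply directly. Your arrangement instead leaves $\delta_\nu^k$ sandwiched between functions of two different operators, $T(I-\mathcal A_{r_B})=(\mathcal L_{\nu+k}+2|k|+a)^{-|k|/2}\delta_\nu^k(I-e^{-r_B^2\mathcal L_{\nu+k}})^M$, and extracting decay there would require the commutation formula \eqref{eq- del nu and del nu + 1} together with the refined one-dimensional estimates — machinery your proposal acknowledges as ``the principal technical obstacle'' but does not supply. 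As it stands, the proposal is not a proof.
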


\begin{proof}
	Note that in this case $k=\sigma(k)$.

	It suffices to prove the theorem for $a=0$ since the proof for the case $a>0$ can be done similarly. 
	
	We first prove that the Riesz transform $\delta_\nu^k  \mathcal L_\nu ^{-|k|/2}$ is bounded on $L^2(\Rn_+)$. Assume that the first $j$ entries of $k$ are equal to $1$ and other entries are $0$. In this case, $|k|=j$. By \eqref{eq- delta and eigenvector} we have
	\[
	\begin{aligned}
		\delta_\nu^k\mathcal{L}_{\nu}^{-|k|/2}f& = \sum_{\alpha\in \mathbb{N}^{n}} (4|\alpha| + 2|\nu| + 2n)^{-k/2} \langle f, \varphi_{\alpha}^\nu\rangle \delta_\nu^k\varphi_{\alpha}^\nu\\
		& = \sum_{\alpha\in \mathbb{N}^{n}} \f{-2^{|k|}\sqrt{\alpha_1\ldots \alpha_j}}{ {(4|\alpha| + 2|\nu| + 2n)^{|k|/2}}} \langle f, \varphi_{\alpha}^\nu\rangle \varphi_{\alpha-k}^{\nu+k},
	\end{aligned} 
	\]
	which implies
	\[
	\|\delta_\nu^k\mathcal{L}_{\nu}^{-|k|/2}f\|_2^2\le \sum_k | \langle f, \varphi_{k}^\nu\rangle|^2 =\|f\|_2^2. 
	\]
	Hence, the Riesz transform $\delta_\nu^k  \mathcal L_\nu ^{-|k|/2}$ is bounded on $L^2(\Rn_+)$.
	
	 We now prove that $\delta_\nu^k\mathcal L^{-|k|/2}_\nu$ is bounded on $\frac{1}{1-\gamma_\nu}<p\le 2$.   Fix $m\in \mathbb N$ such that $2m-1>n$. In the light of Theorem \ref{BZ-thm}, it suffices to prove that for any $\f{1}{1-\gamma_\nu}<p_0<2$,  
	\begin{equation}\label{eq1-proof of mainthm}
		\begin{aligned}
			\Big(\fint_{S_j(B)}|\delta_\nu^k\mathcal L^{-|k|/2}_\nu(I-e^{-r_B^2\mathcal L_\nu})^mf|^{2} dx \Big)^{1/2}\lesi 2^{-j(2m-1)}\Big(\fint_B |f|^{2}dx\Big)^{1/2}
		\end{aligned}
	\end{equation}
	and
	\begin{equation}\label{eq2-proof of mainthm}
		\begin{aligned}
			\Big(\fint_{S_j(B)}|[I-(I-e^{-r_B^2\mathcal L_\nu})^m]f|^{2}dx\Big)^{1/2}\leq
			2^{-j(n+1)}\Big(\fint_B |f|^{p_0}dx\Big)^{1/p_0} 
		\end{aligned}
	\end{equation}
	for all $j\ge 2$ and $f\in C^\vc(\mathbb R^n_+)$ supported in $B$.

	Note that	\[
	I-(I-e^{-r_B^2\mathcal L_\nu})^m =\sum_{i=1}^m c_ie^{-ir_B^2\mathcal L_\nu}, 
	\]
	where $c_i$ are constants.
	
	This, along with Theorem \ref{thm-delta k pnu} and Lemma \ref{lem Lpq}, implies \eqref{eq2-proof of mainthm}. 
	
	It remains to prove \eqref{eq1-proof of mainthm}. To do this, recall that
	\[
	\delta_\nu^k\mathcal L^{-|k|/2}_\nu = c\int_0^\vc  t^{|k|/2} \delta_\nu^k e^{-t\mathcal L_\nu} \f{dt}{t}
	\]
	and
	\[
	(I-e^{-r_B^2\mathcal L_\nu})^m=\int_{[0,r_B^2]^m}\mathcal L_\nu^m e^{-(s_1+\ldots+s_m)\mathcal L_\nu}d\vec{s},
	\]
	where $d\vec{s}=ds_1\ldots ds_m$.
	
	Hence, we can write
	\[
	\delta_\nu^k\mathcal L^{-|k|/2}_\nu(I-e^{-r_B^2\mathcal L_\nu})^mf =c\int_0^\vc\int_{[0,r_B^2]^m} t^{|k|/2} \delta_\nu^k \mathcal L_\nu^{m}e^{-(t+s_1+\ldots+s_m)\mathcal L_\nu} f\,d\vec{s}\,\f{dt}{t}. 
	\]
	Applying  Theorem \ref{thm1s- kernel est  n ge 2} and Lemma \ref{lem Lpq}, we obtain
	\[
	\begin{aligned}
		\|\delta_\nu^k\mathcal L^{-|k|/2}_\nu&(I-e^{-r_B^2\mathcal L_\nu})^mf\|_{L^{2}(S_j(B))}\\&\lesi \int_0^\vc\int_{[0,r_B^2]^m} t^{|k|/2} \big\| \delta_\nu^k \mathcal L_\nu^{m}e^{-(t+s_1+\ldots+s_m)\mathcal L_\nu} f\big\|_{L^{2}(S_j(B))}\,d\vec{s}\,\f{dt}{t}\\
		&\lesi \|f\|_{L^{2}(B)}\int_0^\vc\int_{[0,r_B^2]^m} \f{ t^{|k|/2}}{(t+|\vec s|)^{m+|k|/2}}\exp\Big(-\f{(2^jr_B)^2}{c(t+|\vec s|)}\Big)  d\vec{s}\f{dt}{t}\\
		&\lesi \|f\|_{L^{2}(B)}\int_0^{r_B^2}\ldots \f{dt}{t}+\|f\|_{L^{2}(B)}\int_{r_B^2}^\vc\ldots \f{dt}{t}\\
		&=: E_1 + E_2,
	\end{aligned}
	\]
	where $|\vec s| =s_1+\ldots+s_m$.
	
	For the first term $E_1$, we have
	\[
	\begin{aligned}
		E_1&\lesi \|f\|_{L^{2}(B)}\int_0^{r_B^2}\int_{[0,r_B^2]^m} \f{  t^{|k|/2}}{(t+|\vec s|)^{m +|k|/2}} \Big( \f{t+|\vec s|}{(2^jr_B)^2}\Big)^{m  +|k|/2}  d\vec{s}\f{dt}{t}\\
		&\lesi \|f\|_{L^{2}(B)}\int_0^{r_B^2}\int_{[0,r_B^2]^m}  \f{\sqrt t}{(2^jr_B)^{2m+1}}   d\vec{s}\f{dt}{t}\\
		&\sim \|f\|_{L^{2}(B)} \f{r_B^{2m+1}}{(2^jr_B)^{2m+1}}\\
		&\sim  2^{-j(2m+1)}\|f\|_{L^{2}(B)} .
	\end{aligned}
	\]
	For the second term, note that $t+s_1+\ldots + s_m \sim t $ as $t\ge r_B^2$ and $s_1,\ldots, s_m\in [0,r_B^2]$. It follows that 
	\[
	\begin{aligned}
		E_2&\lesi \|f\|_{L^{2}(B)}\int_{r_B^2}^\vc\int_{[0,r_B^2]^m} \f{\sqrt t^{|k|/2}}{t^{m +|k|/2}} \Big( \f{t}{(2^jr_B)^2}\Big)^{m -1/2}  d\vec{s}\f{dt}{t}\\
		&\sim 2^{-j(2m-1)}\|f\|_{L^{2}(B)}.
	\end{aligned}
	\]
	Collecting the estimates of $E_1$ and $E_2$, we deduce to \eqref{eq1-proof of mainthm}.

	\bigskip
	
	Next we will prove that $\delta_\nu^k\mathcal L^{-|k|/2}_\nu$ is bounded on $L^p$ for $2< p < \f{1}{\gamma_{\nu+\sigma(k)}}$. By duality, it suffices to prove $(\delta_\nu^k\mathcal L^{-|k|/2}_\nu)^*:=\mathcal L_\nu^{-1/2}(\delta_\nu^*)^k$ is bounded on $L^p(\mathbb R^n_+)$ for $\f{1}{1-\gamma_{\nu+\sigma(k)}}<p_0\le 2$. To do this, fix $\f{1}{1-\gamma_{\nu+\sigma(k)}}<p_0\le 2$, $m\in \mathbb N$ such that $2m-1>n$, by Theorem \ref{BZ-thm}, we need to show that  
	\begin{equation}\label{eq1-proof of mainthm dual Riesz transform}
		\begin{aligned}
			\Big(\fint_{S_j(B)}|(\delta_\nu^k\mathcal L^{-|k|/2}_\nu)^*(I-e^{-r_B^2(\mathcal L_{\nu+\sigma(k)}+2|k|)})^mf|^2 dx \Big)^{1/2}\lesi 2^{-j(2m-1)}\Big(\fint_B |f|^2dx\Big)^{1/2}
		\end{aligned}
	\end{equation}
	and
	\begin{equation}\label{eq2-proof of mainthm dual}
		\begin{aligned}
			\Big(\fint_{S_j(B)}|[I-(I-e^{-r_B^2(\mathcal L_{\nu+\sigma(k)}+2|k|)})^m]f|^{2}dx\Big)^{1/2}\leq
			2^{-j(n+1)}\Big(\fint_B |f|^{p_0}dx\Big)^{1/p_0} 
		\end{aligned}
	\end{equation}
	for all $j\ge 2$ and $f\in C^\vc(\mathbb R^n_+)$ supported in $B$.
	
	The estimate \eqref{eq2-proof of mainthm dual} follows directly from Proposition \ref{prop- delta k pt nu -1/2} and Lemma \ref{lem Lpq} and the following expansion
	\[
	I-(I-e^{-r_B^2(\mathcal L_{\nu+\sigma(k)}+2|k|)})^m=\sum_{k=1}^mc_k e^{-kr_B^2(\mathcal L_{\nu+\sigma(k)}+2|k|)},
	\]
	where $c_k$ are constants.
	
	Hence, we need to prove \eqref{eq1-proof of mainthm dual Riesz transform}. We first note that for $f\in L^2(\mathbb R^n_+)$ and $\nu \in (-1,\vc)^n$ we have, for $t>0$,
	\begin{equation*}
		\mathcal L_{\nu}^{-|k|/2} e^{-t\mathcal L_{\nu} }f =\sum_{\ell\in \mathbb N^n} \f{e^{-t(4|\ell|+2|\nu|+2n)}}{(4|\ell|+2|\nu|+2n)^{|k|/2}}\langle f,\varphi^\nu_\ell\rangle \varphi^\nu_\ell.
	\end{equation*}
	Using this and \eqref{eq- delta and eigenvector}, by a simple calculation we come up with
	
	\begin{equation*}
	\begin{aligned}
		e^{-t\mathcal L_{\nu}} \mathcal L_\nu^{-|k|/2}(\delta^*)^k&(I-e^{-r_B^2(\mathcal L_{\nu+\sigma(k)}+2|k|}))^mf\\
		&= (\delta^*)^k e^{-t (\mathcal L_{\nu+\sigma(k)}+2|k|)}(\mathcal L_{\nu+\sigma(k)}+2|k|)^{-1/2}(I-e^{-r_B^2(\mathcal L_{\nu+\sigma(k)}+2|k|}))^mf.
	\end{aligned}	
	\end{equation*}
	At this stage, arguing similarly to the proof of \eqref{eq1-proof of mainthm} we come up with \eqref{eq1-proof of mainthm dual Riesz transform}.
	
	We have just proved that the Riesz transform $\delta_\nu^k(\mathcal L_\nu)^{-|k|/2} $ is bounded on $L^p(\Rn_+)$ for all $\f{1}{\gamma_\nu}<p<\f{1}{\gamma_{\nu+\sigma(k)}}$. To complete the proof, we need to prove the weighted estimate for the Riesz transform $\delta_\nu^k\mathcal L_\nu^{-|k|/2}$. By duality it suffices to show that $\mathcal L_\nu^{-|k|/2}(\delta_\nu^*)^k$ is bounded on $L^p_w(\mathbb R^n_+)$ for all $\f{1}{1-\gamma_{\nu+\sigma(k)}}<p<\f{1}{\gamma_\nu}$ and $w\in A_{p(1-\gamma_{\nu+\sigma(k)})}\cap RH_{(\f{1}{p\gamma_\nu})'}$. We now fix $\f{1}{1-\gamma_{\nu+\sigma(k)}}<p<\f{1}{\gamma_\nu}$ and $w\in A_{p(1-\gamma_{\nu+\sigma(k)})}\cap RH_{(\f{1}{p\gamma_\nu})'}$. By Lemma \ref{weightedlemma1}, we can find $\f{1}{1-\gamma_{\nu+\sigma(k)}}<p_0<p<q_0<\f{1}{\gamma_\nu}$ such that $w\in A_{p_0(1-\gamma_{\nu+\sigma(k)})}\cap RH_{(\f{1}{q_0\gamma_\nu})'}$. Similarly to \eqref{eq1-proof of mainthm dual Riesz transform} and \eqref{eq2-proof of mainthm dual},  by using Theorem \ref{thm1- kernel est  n ge 2} and Lemma \ref{lem Lpq} we also obtain
	\begin{equation*}
		\begin{aligned}
			\Big(\fint_{S_j(B)}|\mathcal L_\nu^{-|k|/2}(\delta^*)^k(I-e^{-r_B^2(\mathcal L_{\nu+\sigma(k)}+2|k|)})^mf|^{q_0} dx \Big)^{1/q_0}\lesi 2^{-j(2m-1)}\Big(\fint_B |f|^{q_0}dx\Big)^{1/q_0}
		\end{aligned}
	\end{equation*}
	and
	\begin{equation*}
		\begin{aligned}
			\Big(\fint_{S_j(B)}|[I-(I-e^{-r_B^2(\mathcal L_{\nu+\sigma(k)}+2|k|)})^m]f|^{{q_0}}dx\Big)^{1/{q_0}}\leq
			2^{-j(n+1)}\Big(\fint_B |f|^{p_0}dx\Big)^{1/p_0} 
		\end{aligned}
	\end{equation*}
	for all $j\ge 2$ and $f\in L^2(\mathbb R^n_+)$ supported in $B$.
	
	Hence, by Theorem \ref{BZ-thm}, $\mathcal L_\nu^{-|k|/2}(\delta^*)^k$ is bounded on $L^{p}_w(\mathbb R^n_+)$ with $w\in A_{p_0(1-\gamma_{\nu+\sigma(k)})}\cap RH_{(\f{1}{q_0\gamma_\nu})'}$.
	
	This completes our proof. 
\end{proof}

Secondly, we prove the following result.
\begin{thm}\label{mainthm-all even indices - weighted estimate}
	Let $\nu\in (-1,\vc)^n$. For $a\ge 0$ and $k \in \mathbb N^n$, the Riesz transform $\delta_\nu^k (\mathcal L_\nu+a)^{-|k|/2}$ is bounded on $L^p_w(\Rn_+)$ for all $\f{1}{1-\gamma_\nu}<p<\f{1}{\gamma_\nu}$ and $w\in A_{p(1-\gamma_\nu)}\cap RH_{(\f{1}{p\gamma_\nu})'}$.
\end{thm}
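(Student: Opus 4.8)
The plan is to follow the architecture of the proof of Theorem \ref{mainthm-all indices are 0 or 1}, the only differences being that $k$ is now arbitrary and that the range of $p$ is the narrow, symmetric one $\f{1}{1-\gamma_\nu}<p<\f{1}{\gamma_\nu}$. As there, it suffices to treat $a=0$, the case $a>0$ being identical. Moreover, if $\gamma_\nu=0$ (i.e. $\nu\in[-1/2,\vc)^n$) then by Proposition \ref{prop- delta k pt nu -1/2} the kernel $\delta_\nu^k p_t^\nu$ enjoys a genuine Gaussian bound, so $\delta_\nu^k\mathcal L_\nu^{-|k|/2}$ is a Calderón--Zygmund operator and the conclusion (range $1<p<\vc$, $w\in A_p$) is classical; cf. \cite{BD}. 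Hence I assume $\gamma_\nu\in(0,1/2)$, and the entire weighted statement will be produced by Theorem \ref{BZ-thm} once (i) $L^2$-boundedness and (ii) the two off-diagonal estimates \eqref{eq1-BZ}, \eqref{eq1-BZ-bis} are in hand.

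The main obstacle is the $L^2$-boundedness of $\delta_\nu^k\mathcal L_\nu^{-|k|/2}$ for $|k|\ge 2$, where the clean spectral formula \eqref{eq- delta and eigenvector} is no longer available. I would reduce it to the quadratic-form inequality $(\delta_\nu^k)^*\delta_\nu^k\le C\,\mathcal L_\nu^{|k|}$; setting $g=\mathcal L_\nu^{-|k|/2}f$ this is exactly $\|\delta_\nu^k\mathcal L_\nu^{-|k|/2}f\|_2\lesi\|f\|_2$. Since operators in distinct variables commute and $\mathcal L_\nu=\sum_j\mathcal L_{\nu_j}$, the elementary fact $\prod_j A_j^{k_j}\le(\sum_j A_j)^{|k|}$ for commuting nonnegative operators reduces the matter to the one-dimensional bounds $(\delta_{\nu_j}^*)^{k_j}\delta_{\nu_j}^{k_j}\le C\,\mathcal L_{\nu_j}^{k_j}$. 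These I would prove by induction on $k_j$, repeatedly using $\delta_\nu^*\delta_\nu=\mathcal L_\nu-2(\nu+1)$ together with the intertwining relation $\mathcal L_\nu\delta_\nu=\delta_\nu(\mathcal L_\nu-2)-(2\nu+1)x^{-2}\delta_\nu$. Each inductive step produces correction terms carrying a factor $x^{-2}$, which are absorbed by the Hardy-type inequality $\mathcal L_{\nu_j}\gtrsim x_j^{-2}$ (valid here since $\nu_j\neq 0$ forces a strictly positive Hardy constant). This is the technical heart of the argument.

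With $L^2$-boundedness secured, I would fix $m\in\mathbb N$ with $2m-1>n$ and run Theorem \ref{BZ-thm} with $\mathcal A_{r_B}=I-(I-e^{-r_B^2\mathcal L_\nu})^m$ and $q_0=2$. The estimate \eqref{eq1-BZ-bis} for $\mathcal A_{r_B}f=\sum_i c_i e^{-ir_B^2\mathcal L_\nu}f$ follows from the symmetric heat bound \eqref{eq-partial pt} and Lemma \ref{lem Lpq}. For \eqref{eq1-BZ} I use the subordination identity
\[
\delta_\nu^k\mathcal L_\nu^{-|k|/2}(I-e^{-r_B^2\mathcal L_\nu})^m
=c\int_0^\vc\!\!\int_{[0,r_B^2]^m} t^{|k|/2}\,\delta_\nu^k\mathcal L_\nu^{m}e^{-(t+|\vec s|)\mathcal L_\nu}\,d\vec s\,\f{dt}{t},
\]
so that the relevant kernel is $\delta_\nu^k\partial_t^{m}p_{t+|\vec s|}^\nu$. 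Crucially, the derivatives $\partial_t^m$ produced by the regulariser are precisely those for which the \emph{symmetric} estimate of Theorem \ref{thm1s- kernel est  n ge 2} holds, i.e. with a factor $(1+\sqrt t/x_j)^{\gamma_{\nu_j}}(1+\sqrt t/y_j)^{\gamma_{\nu_j}}$ for every $j$. Feeding this into Lemma \ref{lem Lpq} with $\beta=\sigma=\gamma_\nu$ (legitimate since $2\gamma_\nu<1$) and splitting the $t$-integral at $r_B^2$ exactly as in the treatment of \eqref{eq1-proof of mainthm} yields \eqref{eq1-BZ} with $\alpha(j)\sim 2^{-j(2m-1)}$, which is summable against $2^{jn}$. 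This gives the unweighted bound on $L^p$ for $\f{1}{1-\gamma_\nu}<p<2$.

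For $2\le p<\f1{\gamma_\nu}$ I would pass to the adjoint $\mathcal L_\nu^{-|k|/2}(\delta_\nu^*)^k$. Its heat-regularised kernel is the transpose of the one just estimated, and since the bound of Theorem \ref{thm1s- kernel est  n ge 2} is symmetric in $x$ and $y$, the adjoint obeys the same off-diagonal estimates; thus the same application of Theorem \ref{BZ-thm} (with $q_0=2$) gives the adjoint on $L^{p'}$ for $\f1{1-\gamma_\nu}<p'<2$, hence $\delta_\nu^k\mathcal L_\nu^{-|k|/2}$ on $L^p$ for $2<p<\f1{\gamma_\nu}$. At this point $\delta_\nu^k\mathcal L_\nu^{-|k|/2}$ is bounded on every $L^{q_0}$ with $q_0\in(\f1{1-\gamma_\nu},\f1{\gamma_\nu})$, so for a target pair $(p,w)$ with $w\in A_{p(1-\gamma_\nu)}\cap RH_{(\f1{p\gamma_\nu})'}$ I would use Lemma \ref{weightedlemma1} to choose $\f1{1-\gamma_\nu}<p_0<p<q_0<\f1{\gamma_\nu}$ with $w\in A_{p/p_0}\cap RH_{(q_0/p)'}$, re-run the two off-diagonal estimates with exponents $(p_0,q_0)$ in place of $(p_0,2)$, and invoke Theorem \ref{BZ-thm} to conclude. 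The only genuinely new input beyond the $\{0,1\}$-valued case is the $L^2$-bound of the second paragraph; everything else is the symmetric-kernel analogue of the argument already carried out for Theorem \ref{mainthm-all indices are 0 or 1}.
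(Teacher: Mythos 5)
Your overall architecture for the weighted part (an $L^{q_0}$ bound plus the two off-diagonal estimates, then Theorem \ref{BZ-thm}) is the same as the paper's, but the two analytic inputs you supply for it are flawed, and the first flaw is fatal. Your $L^2$ argument absorbs the $x^{-2}$ correction terms by the ``Hardy-type inequality'' $\mathcal L_{\nu_j}\gtrsim x_j^{-2}$. For the self-adjoint extension used in this paper that inequality is \emph{false} whenever $\nu_j\in(-1,0)$: the eigenfunctions satisfy $\varphi_{m}^{\nu_j}(x_j)\sim c\,x_j^{\nu_j+1/2}$ as $x_j\to0$, so $\int_0^1 |\varphi_{m}^{\nu_j}(x_j)|^2x_j^{-2}\,dx_j=\infty$ while $\langle\mathcal L_{\nu_j}\varphi_m^{\nu_j},\varphi_m^{\nu_j}\rangle=4m+2\nu_j+2<\infty$; equivalently, for $\nu_j\in(-1,0)$ the Laguerre extension is not the Friedrichs extension, and no form inequality $x_j^{-2}\le C\,\mathcal L_{\nu_j}$ can hold. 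This is exactly the range $\nu_j\in(-1,-1/2)$ (where $\gamma_{\nu_j}>0$) that the theorem is designed to cover, so your induction cannot close precisely where it is needed. (Your side condition ``$\nu_j\neq0$'' is also not available: $\nu_j=0$ is allowed, since your reduction to $\gamma_\nu>0$ constrains only one coordinate.) The paper avoids quadratic forms altogether: it proves the unweighted $L^p$ bounds on the whole range $\frac{1}{1-\gamma_\nu}<p<\frac{1}{\gamma_\nu}$ by induction on $|k|$, writing (for $k_1\ge2$) $\delta_\nu^k\mathcal L_\nu^{-|k|/2}=\delta_\nu^{k-e_1}(\mathcal L_{\nu+e_1}+2)^{-(|k|-1)/2}\circ\delta_{\nu_1}\mathcal L_\nu^{-1/2}$ via the exact first-order intertwining, then splitting $\delta_\nu^{k-e_1}$ by the identity \eqref{eq- del nu and del nu + 1}; the main term is handled by the inductive hypothesis (at parameter $\nu+e_1$, with the shift $a=2$) together with Theorem \ref{mainthm-all indices are 0 or 1}, and the correction operator $\frac{1}{x_1}\delta_{\nu+e_1}^{k-2e_1}(\mathcal L_{\nu+e_1}+2)^{-(|k|-1)/2}$ is bounded directly through kernel estimates (Theorem \ref{thm-delta k pnu}, Propositions \ref{prop- delta k pt nu -1/2}, \ref{prop1-boundedness} and \ref{prop2 - boundedness}).

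The second gap is in your duality step: you assert that the heat-regularised kernel of the adjoint is the transpose of the one you estimated. But Theorem \ref{BZ-thm} requires estimates on $T^*(I-\mathcal A_{r_B})$, with the regulariser on the \emph{right} of $T^*$, whereas the transpose of the kernel of $T(I-\mathcal A_{r_B})$ is the kernel of $(I-\mathcal A_{r_B})T^*$; these differ exactly because $\delta_\nu^k$ does not commute with $e^{-t\mathcal L_\nu}$, which is the whole difficulty of the problem. The paper's device for regularising the adjoint, namely replacing the regulariser by $(I-e^{-r_B^2(\mathcal L_{\nu+\sigma(k)}+2|k|)})^m$ and commuting it through via \eqref{eq- delta and eigenvector}, works only when $k\in\{0,1\}^n$; that is why Theorem \ref{mainthm-all indices are 0 or 1} is proved by duality while the general case is not, being replaced by the induction described above. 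As written, then, your proposal does not deliver the unweighted $L^{q_0}$ boundedness ($q_0>2$) that your final weighted application of Theorem \ref{BZ-thm} presupposes; once that input is supplied (for instance by the paper's induction), your last paragraph does coincide with the paper's concluding argument.
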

Note that Theorem \ref{mainthm-all even indices - weighted estimate} only gives the sharp estimate as in Theorem \ref{mainthm-genral case} in the case that all entries of $k$ are even. For other case, the estimates in Theorem \ref{mainthm-all even indices - weighted estimate} are not sharp. However, if all $\nu_j\ge -1/2$ for all $j=1,\ldots, n$, the estimates in Theorem  \ref{mainthm-all even indices - weighted estimate}  and Theorem \ref{mainthm-genral case} are the same since $\gamma_\nu=0$ in this case. This plays an important role in the proof of Theorem \ref{mainthm-genral case}.

\begin{proof}[Proof of Theorem \ref{mainthm-all even indices - weighted estimate}:]
We first  prove that the Riesz transform  $\delta_\nu^k (\mathcal L_\nu+a)^{-|k|/2}$ is bounded on $L^p(\Rn_+)$ for all $\f{1}{1-\gamma_\nu}<p<\f{1}{\gamma_\nu}$ by induction.
	
	\medskip
	
	\noindent $\bullet$ The statement is true for $|k|=1$ due to Theorem 
	\ref{mainthm-all indices are 0 or 1}.
	
	\medskip

	\noindent $\bullet$ Assume that  Riesz transform  $\delta_\nu^k (\mathcal L_\nu+a)^{-|k|/2}$ is bounded on $L^p_w(\Rn_+)$ for all $\f{1}{1-\gamma_\nu}<p<\f{1}{\gamma_\nu}$ for all $|k|\le \ell$ with  some $\ell\ge 1$ and for all $a\ge 0$. We need to prove the boundedness of $\delta_\nu^k (\mathcal L_\nu+a)^{-|k|/2}$ for $|k|=\ell+1$ and $a\ge 0$. For the sake of simplicity we might assume $a=0$ since the case $a>0$ can be done similarly with minor modifications. Indeed, due to Theorem \ref{mainthm-all indices are 0 or 1}, we consider only the case that at lease one entry of $k$ is greater than $1$. Without loss of generality, we might assume that   $k_1 \ge 2$. Then, using \eqref{eq- delta and eigenvector} we have
	\[
	\delta_\nu^k \mathcal L_\nu^{-|k|/2}= \delta_\nu^{k-e_1} \delta_{\nu_1}\mathcal L_\nu^{-(|k|-1)/2}\mathcal L_\nu^{-1/2}=\delta_\nu^{k-e_1}(\mathcal L_{\nu+e_1}+2)^{-(|k|-1)/2}\circ \delta_{\nu_1}\mathcal L_\nu^{-1/2},
	\]
	which, together with \eqref{eq- del nu and del nu + 1}, further implies
	\[
	\begin{aligned}
		\delta_\nu^k \mathcal L_\nu^{-|k|/2}	&=\delta_{\nu+e_1}^{k-e_1}(\mathcal L_{\nu+e_1}+2)^{-(|k|-1)/2}\circ \delta_{\nu_1}\mathcal L_\nu^{-1/2}+\f{1}{x_1}\delta_{\nu+e_1}^{k-2e_1}(\mathcal L_{\nu+e_1}+2)^{-(|k|-1)/2}\circ \delta_{\nu_1}\mathcal L_\nu^{-1/2}
	\end{aligned}
	\]

By Theorem \ref{mainthm-all indices are 0 or 1} and the facts $\f{1}{\gamma_\nu}\le \f{1}{\gamma_{\nu+e_1}}$ and $\f{1}{1-\gamma_{\nu+e_1}}\le \f{1}{1-\gamma_{\nu}} $ (since $\sigma(e_1)=e_1$) and the inductive hypothesis, the Riesz transforms $\delta_{\nu+e_1}^{k-e_1}(\mathcal L_{\nu+e_1}+2)^{-(|k|-1)/2}$ and $ \delta_{\nu_1}\mathcal L_\nu^{-1/2}$ are bounded on $L^p(\Rn_+)$ for $\f{1}{1-\gamma_\nu}<p<\f{1}{\gamma_\nu}$. Hence, it suffices to prove that the operator $\displaystyle \f{1}{x_1}\delta_{\nu+e_1}^{k-2e_1}(\mathcal L_{\nu+e_1}+2)^{-(|k|-1)/2}$ for on $L^p(\Rn_+)$ for $\f{1}{1-\gamma_\nu}<p<\f{1}{\gamma_\nu}$.

Indeed, using the formula
\[
(\mathcal L_{\nu+e_1}+2)^{-(|k|-1)/2}=c\int_0^\vc t^{(|k|-1)/2}e^{-2t} e^{-t\mathcal L_{\nu+e_1}} \f{dt}{t},
\]
we have
\[
\begin{aligned}
	\Big|\f{1}{x_1}\delta_{\nu_1+e_1}^{k-2e_1}&(\mathcal L_{\nu+e_1}+2)^{-(|k|-1)/2}f(x)\Big|\\
	&\lesi \int_{\mathbb R_+}\int_0^\vc \f{\sqrt t}{x_1} t^{(k_1-2)/2}|\delta_{\nu_1+1}^{k_1-2} p_t^{\nu_1+1}(x_1,y_1)|\prod_{i\ne 1}\sup_{t_i>0}\int_{\mathbb R_+} |t^{k_i/2}\delta_{\nu_i}^{k_i}p_{t_i}^{\nu_i}(x_i,y_i)||f(y)|d(y_i)\f{dt}{t}d{y_j}.
\end{aligned}
\]
From Theorem \ref{thm-delta k pnu} and Propositions \ref{prop- delta k pt nu -1/2}, \ref{prop1-boundedness} and \ref{prop2 - boundedness}, we have the following operators
\[
f\mapsto \sup_{t_i>0}\int_{\mathbb R_+} |t^{k_i/2}\delta_{\nu_i}^{k_i}p_{t_i}^{\nu_i}(x_i,y_i)||f(y_i)|d(y_i), i\ne 1
\]
and
\[
f\mapsto \int_{\mathbb R_+}\int_0^\vc \f{\sqrt t}{x_1} t^{(k_1-2)/2}|\delta_{\nu_1+1}^{k_1-2} p_t^{\nu_1+1}(x_1,y_1)|f(y_1)|\f{dt}{t}dy_1
\]
are bounded on $L^p(\mathbb R_+)$ for $\f{1}{1-\gamma_\nu}<p<\f{1}{\gamma_\nu}$.

It follows that the operator $\displaystyle \f{1}{x_1}\delta_{\nu+e_1}^{k-2e_1}(\mathcal L_{\nu+e_1}+2)^{-(|k|-1)/2}$ is bounded on $L^p(\Rn_+)$ for $\f{1}{1-\gamma_\nu}<p<\f{1}{\gamma_\nu}$. 

Hence, we have proved that the Riesz transform  $\delta_\nu^k (\mathcal L_\nu+a)^{-|k|/2}$ is bounded on $L^p(\Rn_+)$ for all $\f{1}{1-\gamma_\nu}<p<\f{1}{\gamma_\nu}$.

\medskip

We now turn to prove the weighted estimates. We need only to prove for the case $a=0$. Fix $\f{1}{1-\gamma_\nu}<p<\f{1}{\gamma_\nu}$ and $w\in A_{p(1-\gamma_\nu)}\cap RH_{(\f{1}{p\gamma_\nu})'}$. Then by Lemma \ref{weightedlemma1}, we can find $\f{1}{1-\gamma_{\nu}}<p_0<p<q_0<\f{1}{\gamma_\nu}$ such that $w\in A_{p_0(1-\gamma_{\nu})}\cap RH_{(\f{1}{q_0\gamma_\nu})'}$. For $m\in \mathbb N$ such that $2m-1>n$, by using the arguments used in the proofs of \eqref{eq1-proof of mainthm} and \eqref{eq2-proof of mainthm}, we also obtain that   
\begin{equation*}
	\begin{aligned}
		\Big(\fint_{S_j(B)}|\delta_\nu^k\mathcal L^{-|k|/2}_\nu(I-e^{-r_B^2\mathcal L_\nu})^mf|^{q_0} dx \Big)^{1/q_0}\lesi 2^{-j(2m-1)}\Big(\fint_B |f|^{q_0}dx\Big)^{1/q_0}
	\end{aligned}
\end{equation*}
and
\begin{equation*}
	\begin{aligned}
		\Big(\fint_{S_j(B)}|[I-(I-e^{-r_B^2\mathcal L_\nu})^m]f|^{q_0}dx\Big)^{1/q_0}\leq
		2^{-j(n+1)}\Big(\fint_B |f|^{p_0}dx\Big)^{1/p_0} 
	\end{aligned}
\end{equation*}
for all $j\ge 2$ and $f\in L^p(\mathbb R^n_+)$ supported in $B$.
	
Hence, the conclusion follows directly from these two estimates and Theorem \ref{BZ-thm}.

This completes our proof.
\end{proof}

In order to prove the main theorem, we also need the following result.
\begin{thm}\label{mainthm-all odds are 1}
	Let $\nu\in (-1,\vc)^n$. For  $k \in \mathbb N^n$ such that all odd entries of $k$ are equal to $1$, the Riesz transform $\delta_\nu^k (\mathcal L_\nu+a)^{-|k|/2}$ is bounded on $L^p_w(\Rn_+)$ for all $\f{1}{1-\gamma_\nu}<p<\f{1}{\gamma_{\nu+\sigma(k)}}$ and $w\in A_{p(1-\gamma_\nu)}\cap RH_{(\f{1}{p\gamma_{\nu+\sigma(k)}})'}$.
\end{thm}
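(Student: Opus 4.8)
The plan is to reduce this mixed case to the two cases already settled, by peeling the odd (hence order-one) derivatives off as a single factor and leaving behind a purely even Riesz transform with a shifted parameter. Write $m=\sigma(k)$ and $k'=k-m$. By hypothesis every odd $k_j$ equals $1$, so at the odd positions $m_j=1$ and $k'_j=0$, while at the even positions $m_j=0$ and $k'_j=k_j$ (even). Thus $k'$ has all even entries, $|k|=|k'|+|m|$, and, since $m$ and $k'$ have disjoint supports and $\delta_{\nu_i},\delta_{\nu_j}$ act on different variables for $i\ne j$, the operators $\delta_\nu^{m}$ and $\delta_\nu^{k'}$ commute. Moreover, as $k'$ is supported where $\nu$ and $\nu+m$ agree, we have $\delta_\nu^{k'}=\delta_{\nu+m}^{k'}$.

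The algebraic heart is the intertwining relation already exploited in the proof of Theorem \ref{mainthm-all even indices - weighted estimate}: combining \eqref{eq- delta and eigenvector} with the eigenvalue formula gives $\delta_{\nu_j}(\mathcal L_\nu+a)=(\mathcal L_{\nu+e_j}+2+a)\delta_{\nu_j}$, and iterating over the odd positions yields $\delta_\nu^{m}(\mathcal L_\nu+a)=(\mathcal L_{\nu+m}+2|m|+a)\delta_\nu^{m}$, hence $\delta_\nu^{m}(\mathcal L_\nu+a)^{-s}=(\mathcal L_{\nu+m}+2|m|+a)^{-s}\delta_\nu^{m}$ by the functional calculus. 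Using this together with the commutation of $\delta_\nu^{m}$ and $\delta_\nu^{k'}$, I would factor (taking $a=0$; the case $a>0$ is identical, with $2|m|$ replaced by $2|m|+a$)
\[
\delta_\nu^{k}\mathcal L_\nu^{-|k|/2}
=\delta_{\nu+m}^{k'}\big(\mathcal L_{\nu+m}+2|m|\big)^{-|k'|/2}\circ \delta_\nu^{m}\mathcal L_\nu^{-|m|/2},
\]
an identity which holds on the dense linear span of $\{\varphi^\nu_\alpha\}$ and extends to the relevant $L^p_w$ once both factors are shown bounded.

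Next I would apply the known results to each factor on the target range $\f{1}{1-\gamma_\nu}<p<\f{1}{\gamma_{\nu+\sigma(k)}}$. Since $m$ has entries in $\{0,1\}$ and $\sigma(m)=m=\sigma(k)$, Theorem \ref{mainthm-all indices are 0 or 1} gives that the right factor $\delta_\nu^{m}\mathcal L_\nu^{-|m|/2}$ is bounded on $L^p_w$ for exactly $\f{1}{1-\gamma_\nu}<p<\f{1}{\gamma_{\nu+\sigma(k)}}$ and $w\in A_{p(1-\gamma_\nu)}\cap RH_{(\f{1}{p\gamma_{\nu+\sigma(k)}})'}$, i.e.\ precisely the asserted class. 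For the left factor, $k'$ is even and the underlying operator carries parameter $\nu+m=\nu+\sigma(k)$, so Theorem \ref{mainthm-all even indices - weighted estimate} (applied with parameter $\nu+m$ and constant $2|m|\ge 0$) yields boundedness on $L^p_w$ for $\f{1}{1-\gamma_{\nu+\sigma(k)}}<p<\f{1}{\gamma_{\nu+\sigma(k)}}$ and $w\in A_{p(1-\gamma_{\nu+\sigma(k)})}\cap RH_{(\f{1}{p\gamma_{\nu+\sigma(k)}})'}$.

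It then remains to check that the target range and weight class sit inside those required by the left factor, which is exactly where the odd entries pay off. The elementary input is $\gamma_{\nu+\sigma(k)}\le\gamma_\nu$: at odd positions $(\nu+\sigma(k))_j=\nu_j+1>-1/2$, so $\gamma_{\nu_j+1}=0\le\gamma_{\nu_j}$, while at even positions $\gamma$ is unchanged. Consequently $\f{1}{1-\gamma_{\nu+\sigma(k)}}\le\f{1}{1-\gamma_\nu}$, so the target $p$-range lies inside the left factor's range, and $p(1-\gamma_\nu)\le p(1-\gamma_{\nu+\sigma(k)})$ forces $A_{p(1-\gamma_\nu)}\subset A_{p(1-\gamma_{\nu+\sigma(k)})}$ by Lemma \ref{weightedlemma1}(ii), while the reverse-Hölder exponent is literally the same for both factors. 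Hence every admissible $w$ for the target statement is simultaneously admissible for both factors, and composing the two bounded operators gives the claim. I expect the only genuinely delicate points to be the rigorous justification of the operator identity (the intertwining together with the density/closure argument that lets the factorization pass from $\{\varphi^\nu_\alpha\}$ to $L^p_w$) and the bookkeeping verifying the compatibility of the two weight classes; all analytic input is inherited from Theorems \ref{mainthm-all indices are 0 or 1} and \ref{mainthm-all even indices - weighted estimate}.
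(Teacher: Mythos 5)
Your proposal is correct and follows essentially the same route as the paper: factor $\delta_\nu^k\mathcal L_\nu^{-|k|/2}$ into the order-one Riesz transform $\delta_\nu^{\sigma(k)}\mathcal L_\nu^{-|\sigma(k)|/2}$ (Theorem \ref{mainthm-all indices are 0 or 1}) composed with an all-even Riesz transform at the shifted parameter $\nu+\sigma(k)$ (Theorem \ref{mainthm-all even indices - weighted estimate}), then use $\gamma_{\nu+\sigma(k)}\le\gamma_\nu$ to match ranges and weight classes. In fact your write-up is more careful than the paper's two-line proof, which omits the shift $+2|\sigma(k)|$ in the intertwined resolvent (it appears correctly in the proof of Theorem \ref{mainthm-genral case}) and leaves the weight-class compatibility check implicit.
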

\begin{proof}
	Assume that the first $j$ entries of $k$ are equal to $1$ and other entries are even. By \eqref{eq- delta and eigenvector}, 
	\[
	\delta_\nu^k \mathcal L_\nu^{-|k|/2} = \prod_{i=j+1}^n\delta_{\nu_i}^{k_i} \mathcal L_{\nu+\sigma(k)}^{-(n-j)/1}\circ \prod_{i=1}^j\delta_{\nu_i} \mathcal L_\nu^{-j/2}.
	\]
	By Theorems \ref{mainthm-all indices are 0 or 1} and \ref{mainthm-all even indices - weighted estimate}, the Riesz transforms $\prod_{i=j+1}^n\delta_{\nu_i}^{k_i} \mathcal L_{\nu+\sigma(k)}^{-(n-j)/1}$ and $\prod_{i=1}^j\delta_{\nu_i} \mathcal L_\nu^{-j/2}$ are bounded on $L^p_w(\Rn_+)$ for all $\f{1}{1-\gamma_\nu}<p<\f{1}{\gamma_{\nu+\sigma(k)}}$ and $w\in A_{p(1-\gamma_\nu)}\cap RH_{(\f{1}{p\gamma_{\nu+\sigma(k)}})'}$.
	
	This completes the proof. 
\end{proof}
We are ready to give the proof of Theorem \ref{mainthm-genral case}.

\begin{proof}[Proof of Theorem \ref{mainthm-genral case}:]
	Due to Theorems \ref{mainthm-all even indices - weighted estimate} and \ref{mainthm-all odds are 1}, we need only to prove for the case $k_{\rm odd}\ne 0$ and at least one odd index greater than or equal to  $3$. Without loss of generality, we might assume that  the first $j$ entries $k_1,\ldots,k_j$ are odd and greater than $1$ (and hence, are greater than or equal to $3$), the next $\ell$ entries $k_{j+1}=\ldots=k_{j+\ell}=1$ for some $j,\ell\in \{1,\ldots,n\}$ with $j+\ell\le n$, and the remaining entries $k_{j+\ell+1},\ldots, k_n$ are even. In this case, $\sigma(k)=(1,\ldots, 1, 0,\ldots, 0)$, i.e., the first $(j+\ell)$ entries are equal to $1$ and other are $0$. Then, using \eqref{eq- delta and eigenvector} we have
	\[
	\delta_\nu^k \mathcal L_\nu^{-|k|/2} = \delta_\nu^{k-\sigma(k)}(\mathcal L_{\nu+\sigma(k)}+2(j+\ell))^{-|k-\sigma(k)|/2}\circ  \delta^{\sigma(k)}_{\nu}\mathcal L_\nu^{-|\sigma(k)|/2}.
	\]
	The weighted boundedness of $\delta^{\sigma(k)}_{\nu}\mathcal L_\nu^{-|\sigma(k)|/2}$ follows by Theorem \ref{mainthm-all indices are 0 or 1}.
	
	It suffices to prove the weighted estimates of $\delta_\nu^{k-\sigma(k)}(\mathcal L_{\nu+\sigma(k)}+2(j+\ell))^{-|k-\sigma(k)|/2}$. To do this, by \eqref{eq- del nu and del nu + 1}, for $i=1, \ldots, j$ we have
	\[
	\delta_{\nu_i}^{k_i-1} =\delta_{\nu_i+1}^{k_i-1}+\f{1}{x_i}\delta_{\nu_i+1}^{k_i-2}.
	\]
	Consequently,
	\[
	\begin{aligned}
		\delta_\nu^{k-\sigma(k)} &=\prod_{i=1}^j \Big(\delta_{\nu_i+1}^{k_i-1}+\f{1}{x_i}\delta_{\nu_i+1}^{k_i-2}\Big)\prod_{i=j+\ell+1}^n \delta_{\nu_i}^{k_i}\\
		&=\prod_{i=1}^j  \delta_{\nu_i+1}^{k_i-1} \prod_{i=j+\ell+1}^n \delta_{\nu_i}^{k_i} + \sum_{\sigma \in \{0,1\}^j\backslash \{\textbf{0}\}}  \prod_{i=1}^j \frac{1}{x_i^{\sigma_i}}  \delta_{\nu_i+1}^{k_i - 1 - \sigma_i}\prod_{i=j+\ell+1}^n \delta_{\nu_i}^{k_i}.
	\end{aligned}
	\]
	
	Hence,
	\[
	\begin{aligned}
		\delta_\nu^{k-\sigma(k)}&(\mathcal L_{\nu+\sigma(k)}+2(j+\ell))^{-|k-\sigma(k)|/2}\\
		&=\prod_{i=1}^j  \delta_{\nu_i+1}^{k_i-1} \prod_{i=j+\ell+1}^n \delta_{\nu_i}^{k_i} (\mathcal L_{\nu+\sigma(k)}+2(j+\ell))^{-|k-\sigma(k)|/2}\\
		& \ \ \ + \sum_{\sigma \in \{0,1\}^j\backslash \{\textbf{0}\}}  \prod_{i=1}^j \frac{1}{x_i^{\sigma_i}}  \delta_{\nu_i+1}^{k_i - 1 - \sigma_i}\prod_{i=j+\ell+1}^n \delta_{\nu_i}^{k_i}(\mathcal L_{\nu+\sigma(k)}+2(j+\ell))^{-|k-\sigma(k)|/2}.
	\end{aligned}
	\]
	By Theorem \ref{mainthm-all even indices - weighted estimate} and the fact $\f{1}{1-\gamma_{\nu+\sigma(k)}}<\f{1}{1-\gamma_\nu}$ the first operator in the above identity  is bounded on $L^p_w(\Rn_+)$ for all  $\f{1}{1-\gamma_\nu}<p<\f{1}{\gamma_{\nu+\sigma(k)}}$ and $w\in A_{p(1-\gamma_\nu)}\cap RH_{(\f{1}{p\gamma_{\nu+\sigma(k)}})'}$. For the second operator we have, let $\sigma \in \{0,1\}^j\backslash \{\textbf{0}\}$. Without loss of generality, we might assume $\sigma_1=1$. Then,  we have
	\[
\begin{aligned}
	\Big|\prod_{i=1}^j \frac{1}{x_i^{\sigma_i}}  &\delta_{\nu_i+1}^{k_i - 1 - \sigma_i}\prod_{i=j+\ell+1}^n \delta_{\nu_i}^{k_i}(\mathcal L_{\nu+\sigma(k)}+2(j+\ell))^{-|k-\sigma(k)|/2}f(x)\Big|\\
	&\lesi \int_{\mathbb R_+}\int_0^\vc  \Big|\f{\sqrt t}{x_1} t^{(k_1-2)/2}\delta^{k_1-2}_{\nu_1+1}p_t^{\nu_1+1}(x_1,y_1)\Big|\\
	&\prod_{i\in\{2,\ldots, j\}\atop  \sigma_i\ne 0 }\sup_{t_i>0}\int_{\mathbb R_+}\Big|\f{\sqrt t_i}{x_i} t_i^{(k_i-2)/2}\delta^{k_i-2}_{\nu_i+1}p_t^{\nu_i+1}(x_i,y_i)\Big|\prod_{i\in\{2,\ldots, j\}\atop  \sigma_i=0}\sup_{t_i>0}\int_{\mathbb R_+} |t_i^{(k_i-1)/2}\delta_{\nu_i+1}^{k_i-1}p_{t_i}^{\nu_i+1}(x_i,y_i)|\\
	& \ \ \ \ \times \prod_{i=j+\ell+1}^n\sup_{t_i>0}\int_{\mathbb R_+} |t^{k_i/2}\delta_{\nu_i}^{k_i}p_{t_i}^{\nu_i}(x_i,y_i)||f(y)|d(y_i)\f{dt}{t}d{y_j}.
\end{aligned}
	\]
	Note that since $k_i-2$ is odd for $i=1,\ldots, j$, due to Proposition \ref{prop-2-stronger estimate on delta k },  for $i\in\{2,\ldots, j\}$ with $\sigma_i=0$, 
	\[
	\begin{aligned}
		\f{\sqrt t_i}{x_i} t_i^{(k_i-2)/2}|\delta^{k_i-2}_{\nu_i+1}p_t^{\nu_i+1}(x_i,y_i)|&\lesi \f{1}{\sqrt t} \exp\Big(-\f{|x-y|^2}{ct}\Big) \Big(1+\f{\sqrt t}{x}\Big)^{-(\nu_i+3/2)}\\
		&\lesi \f{1}{\sqrt t} \exp\Big(-\f{|x-y|^2}{ct}\Big),
	\end{aligned}
	\]
	which implies that 
	\[
	 \sup_{t_i>0}\int_{\mathbb R_+}\Big|\f{\sqrt t_i}{x_i} t_i^{(k_i-2)/2}\delta^{k_i-2}_{\nu_i+1}p_t^{\nu_i+1}(x_i,y_i)\Big| |f(y_i)| dy_i \lesi \mathcal M f(x_i).
	\]
	Hence,
	\[
	f\mapsto  \sup_{t_i>0}\int_{\mathbb R_+}\Big|\f{\sqrt t_i}{x_i} t_i^{(k_i-2)/2}\delta^{k_i-2}_{\nu_i+1}p_t^{\nu_i+1}(x_i,y_i)\Big| |f(y_i)| dy_i 
	\]
		is bounded on $L^p(\mathbb R_+)$ for all $1<p<\vc$.
		
	For the same reason, by Proposition \ref{prop- delta k pt nu -1/2}, for $i\in \{1,\ldots, j\}$ with $\sigma_i=0$ the operator 
	\[
	f\mapsto \sup_{t_i>0}\int_{\mathbb R_+}|t_i^{(k_i-1)/2}\delta_{\nu_i+1}^{k_i-1}p_{t_i}^{\nu_i+1}(x_i,y_i)| |f(y_i)| dy_i
	\]
	is dominated by (a multiple of) the Hardy-Littlewood maximal function $\mathcal M f(x_i)$ and hence, is bounded on $L^p(\mathbb R_+)$ for all $1<p<\vc$.
	
	Finally, by Theorem  \ref{thm-delta k pnu} and Proposition \ref{prop1-boundedness},  the operator 
	\[
	f \mapsto \sup_{t_i>0}\int_{\mathbb R_+} |t^{k_i/2}\delta_{\nu_i}^{k_i}p_{t_i}^{\nu_i}(x_i,y_i)||f(y_i)|d(y_i)
	\]
	is bounded on $L^p(\mathbb R_+)$ for all $\f{1}{1-\gamma_{\nu_i}}<p<\f{1}{\gamma_{\nu_i}}$ for $i=j+\ell+1,\ldots, n$; and hence is bound $L^p(\mathbb R_+)$ for all $\f{1}{1-\gamma_\nu}<p<\f{1}{\gamma_{\nu+\sigma(k)}}$ (since $\gamma_{\nu+\sigma(k)}\ge \gamma_{\nu_i}$ for $i=j+\ell+1,\ldots, n$). 
	
	It follows that the operators 
	\[
	\prod_{i=1}^j \frac{1}{x_i^{\sigma_i}}  \delta_{\nu_i+1}^{k_i - 1 - \sigma_i}\prod_{i=j+\ell+1}^n \delta_{\nu_i}^{k_i}(\mathcal L_{\nu+\sigma(k)}+2(j+\ell))^{-|k-\sigma(k)|/2}
	\]
	is bounded on $L^p(\mathbb R_+)$ with $\f{1}{1-\gamma_\nu}<p<\f{1}{\gamma_{\nu+\sigma(k)}}$ for all $\sigma \in \{0,1\}^j\backslash \{\textbf{0}\}$.
	
	To complete the proof it suffices to prove that for each  $\sigma \in \{0,1\}^j\backslash \{\textbf{0}\}$, the  operator
	\[
	   \prod_{i=1}^j \frac{1}{x_i^{\sigma_i}}  \delta_{\nu_i+1}^{k_i - 1 - \sigma_i}\prod_{i=j+\ell+1}^n \delta_{\nu_i}^{k_i}(\mathcal L_{\nu+\sigma(k)}+2(j+\ell))^{-|k-\sigma(k)|/2}
	\]
		is bounded on $L^p(\mathbb R_+)$ with $\f{1}{1-\gamma_\nu}<p<\f{1}{\gamma_{\nu+\sigma(k)}}$  and $w\in A_{p(1-\gamma_\nu)}\cap RH_{(\f{1}{p\gamma_{\nu+\sigma(k)}})'}$.
		
		To do this,  fix $\f{1}{1-\gamma_\nu}<p<\f{1}{\gamma_{\nu+\sigma(k)}}$ and $w\in A_{p(1-\gamma_\nu)}\cap RH_{(\f{1}{p\gamma_{\nu+\sigma(k)}})'}$. By Lemma \ref{weightedlemma1}, we can find $\f{1}{1-\gamma_{\nu}}<p_0<p<q_0<\f{1}{\gamma_{\nu+\sigma(k)}}$ such that $w\in A_{{p_0(1-\gamma_{\nu})}}\cap RH_{(\f{1}{q_0\gamma_{\nu+\sigma(k)}})'}$.  For each $\sigma \in \{0,1\}^j\backslash \{\textbf{0}\}$, we set 
	\begin{equation}\label{eq- Tsigma}
	\begin{aligned}
		T_\sigma:&=\prod_{i=1}^j \frac{1}{x_i^{\sigma_i}}  \delta_{\nu_i+1}^{k_i - 1 - \sigma_i}\prod_{i=j+\ell+1}^n \delta_{\nu_i}^{k_i}(\mathcal L_{\nu+\sigma(k)}+2(j+\ell))^{-|k-\sigma(k)|/2}\\
		&=\prod_{i\in \{1,\ldots,j\}\atop \sigma_i=1} \frac{1}{x_i}  \delta_{\nu_i+1}^{k_i - 2}\prod_{i\in \{1,\ldots,j\}\atop \sigma_i=0} \delta_{\nu_i+1}^{k_i - 1}\prod_{i=j+\ell+1}^n \delta_{\nu_i}^{k_i}(\mathcal L_{\nu+\sigma(k)}+2(j+\ell))^{-|k-\sigma(k)|/2}\\
		&=\prod_{i\in \{1,\ldots,j\}\atop \sigma_i=1} \frac{1}{x_i}  \delta_{\nu_i+1}^{k_i - 2}\prod_{i\in \{1,\ldots,j\}\atop \sigma_i=0} \delta_{\nu_i+1}^{k_i - 1}\prod_{i=j+\ell+1}^n \delta_{\nu_i}^{k_i}(\mathcal L_{\nu+\sigma(k)}+2(j+\ell))^{-|k-\sigma(k)|/2}.
	\end{aligned}
	\end{equation}
	
	Due to Theorem \ref{BZ-thm}, we need only to verify the following two estimates
	\begin{equation}\label{eq1-proof of last thm}
		\begin{aligned}
			\Big(\fint_{S_j(B)}|T_{\sigma}(I-e^{-r_B^2\mathcal L_{\nu+\sigma(k)}})^mf|^{q_0} dx \Big)^{1/q_0}\lesi 2^{-j(2m-1)}\Big(\fint_B |f|^{q_0}dx\Big)^{1/q_0}
		\end{aligned}
	\end{equation}
	and
	\begin{equation}\label{eq2-proof of last thm}
		\begin{aligned}
			\Big(\fint_{S_j(B)}|[I-(I-e^{-r_B^2\mathcal L_{\nu+\sigma(k)} })^m]f|^{q_0}dx\Big)^{1/q_0}\leq
			2^{-j(n+1)}\Big(\fint_B |f|^{p_0}dx\Big)^{1/p_0} 
		\end{aligned}
	\end{equation}
	for $2m-1>n$ and  all $j\ge 2$ and $f\in L^p(\mathbb R^n_+)$ supported in $B$.
	
	Using the identity
	\[
	I-(I-e^{-r_B^2\mathcal L_{\nu+\sigma(k)} })^m =\sum_{i=1}^m c_i e^{-ir_B^2\mathcal L_{\nu+\sigma(k)} },
	\]
	where $c_i$ are constants, \eqref{eq-partial pt} in Proposition \ref{prop- delta  pt} and Lemma \ref{lem Lpq}, we will obtain \eqref{eq2-proof of last thm}.
	
	For the first inequality, from \eqref{eq- Tsigma} and the following identities
	\[
	(\mathcal L_{\nu+\sigma(k)}+2(j+\ell))^{-|k-\sigma(k)|/2}=c\int_0^\vc t^{|k-\sigma(k)|/2}e^{-2(j+\ell)t} e^{-t\mathcal L_{\nu+\sigma(k)}}
	\]
	and
		\[
	(I-e^{-r_B^2\mathcal L_{\nu+\sigma(k)}})^m=\int_{[0,r_B^2]^m}\mathcal L_{\nu+\sigma(k)}^m e^{-(s_1+\ldots+s_m)\mathcal L_{\nu+\sigma(k)}}d\vec{s},
	\]
	where $d\vec{s}=ds_1\ldots ds_m$, we have
	\[
	\begin{aligned}
		T_{\sigma}&(I-e^{-r_B^2\mathcal L_{\nu+\sigma(k)}})^mf\\
		& = c\int_0^\vc\int_{[0,r_B^2]^m}t^{|k-\sigma(k)|/2}e^{-2(j+\ell)t}\\
		& \ \ \times \prod_{i\in \{1,\ldots,j\}\atop \sigma_i=1} \frac{1}{x_i}  \delta_{\nu_i+1}^{k_i - 2}\prod_{i\in \{1,\ldots,j\}\atop \sigma_i=0} \delta_{\nu_i+1}^{k_i - 1}\prod_{i=j+\ell+1}^n \delta_{\nu_i}^{k_i} \mathcal L_{\nu+\sigma(k)}^m e^{-(t+s_1+\ldots+s_m)\mathcal L_{\nu+\sigma(k)}}f d\vec{s} \f{dt}{t}.
	\end{aligned}
	\]
	Note that in this case all $k_i-2$ are even for $i\in \{1,\ldots, j\}$ with $\sigma_i=1$, and both $k_i, i=j+\ell+1,\ldots, n$ and  $k_i-1, i\in \{1,\ldots, j\}$ with $\sigma_i=0$ are even. Hence, we can arguing similarly to the proof of \eqref{eq2-proof of mainthm dual} by using  Theorem \ref{thm3- kernel est  n ge 2} and Lemma \ref{lem Lpq} to obtain \eqref{eq2-proof of last thm}.
	
	Hence, this completes our proof.
		
\end{proof}

{\bf Acknowledgement.} The author was supported by the research grant ARC DP140100649 from the Australian Research Council.

\end{document}